\newtheorem{thm}{Theorem}[section]
\newtheorem{lemma}[thm]{Lemma}
\newtheorem{claim}{Claim}[thm]
\newtheorem{theorem}[thm]{Theorem}
\newtheorem{corollary}[thm]{Corollary}
\newtheorem{question}[thm]{Question}
\newtheorem{conjecture}[thm]{Conjecture}
\newtheorem{proposition}[thm]{Proposition}
\newtheorem{observation}[thm]{Observation}
\newtheorem*{thma}{Theorem A}
\newtheorem*{thmb}{Theorem B}
\newtheorem*{thmc}{Theorem C}
\theoremstyle{definition}
\newtheorem{defn}[thm]{Definition}
\newtheorem{definition}[thm]{Definition}
\DeclareMathOperator{\otp}{otp}
\DeclareMathOperator{\cf}{cf}
\def\s{\subseteq}
\newcommand{\azero}{{\aleph_0}}
\newcommand{\aone}{{\aleph_1}}
\newcommand{\omo}{{\omega_1}}
\newcommand{\atwo}{{\aleph_2}}
\newcommand{\omt}{{\omega_2}}
\newcommand{\po}{\mathbb{P}}
\newcommand{\func}{\rightarrow}
\DeclareMathOperator{\cc}{CC}
\newcommand{\omobaire}{{}^\omo\omo}
\newcommand{\gch}{GCH}
\newcommand{\omthr}{{\omega_3}}
\newcommand{\fsp}[2]{{}^{#1}{#2}}
\newcommand{\seq}[1]{\langle #1 \rangle}
\newcommand{\gu}{{\mathsf{L}}}
\setlist[enumerate,1]{label={(\roman*)}}
\DeclareMathOperator{\card}{Card}
\newenvironment{why}[1][Proof]{\proof[#1]\mbox{}}{\endproof}
\DeclareMathOperator{\cl}{cl}
\DeclareMathOperator{\cov}{cov}
\DeclareMathOperator{\meet}{m}
\DeclareMathOperator{\cof}{cof}
\DeclareMathOperator{\im}{im}
\DeclareMathOperator{\pp}{pp}
\DeclareMathOperator{\ecf}{ecf}
\subjclass[2020]{Primary 03E04; Secondary 03E05, 03E17}
\keywords{Depth, increasing chain, order modulo ideal, meeting numbers.}
\title{On strong chains of sets and functions}
\date{}
\author{Tanmay C.\ Inamdar}
\address{Department of Mathematics, Bar-Ilan University, Ramat-Gan 5290002, Israel.}
\begin{document}
	\maketitle
	\begin{abstract} 
		Shelah has shown that there are no chains of length $\omega_3$ increasing modulo finite in ${}^{\omega_2}\omega_2$. We improve this result to sets. That is, we show that there are no chains of length $\omega_3$ in $[\omega_2]^{\aleph_2}$ increasing modulo finite. This contrasts with results of Koszmider who has shown that there are, consistently, chains of length $\omega_2$ increasing modulo finite in $[\omega_1]^{\aleph_1}$ as well as in ${}^{\omega_1}\omega_1$. More generally, we study the depth of function spaces ${}^\kappa\mu$ quotiented by the ideal $[\kappa]^{< \theta}$ where $\theta< \kappa$ are infinite cardinals.
	\end{abstract}
	\section{Introduction}
	Throughout this paper $\theta, \kappa, \lambda$ are infinite cardinals with $\theta \leq \kappa$ and $\mu$ is a (possibly finite) cardinal.
	
	An important part of set theory has been the study of $\fsp{\omega}{\omega}$ and $[\omega]^\omega$ quotiented by the ideal $[\omega]^{<\omega}$ of finite subsets of the natural numbers, the area of cardinal characteristics of the continuum (see for example \cite{barjud}). Another area, recently receiving more attention than previously, is a similar study performed for an arbitrary infinite cardinal $\kappa$, perhaps with the addition of extra cardinal arithmetic assumptions. That is, the study of $\fsp{\kappa}{\kappa}$ and $[\kappa]^\kappa$ quotiented by the ideal $[\kappa]^{<\kappa}$, the area of higher cardinal invariants(see for example \cite{kls} and its extensive bibliography). Another natural ideal to consider for $\kappa$ an uncountable regular cardinal is the ideal of non-stationary subsets of $\kappa$ (see for example \cite{galvinhajnal}). 
	
	In this paper we focus on ideals of the form $[\kappa]^{< \theta}$ where $\theta$ and $\kappa$ are infinite cardinals, and $\theta$ is strictly less than $\kappa$. Before we describe our results, let us mention some other results in the literature on this topic.
	
	Baumgartner in his thesis proved the following.
	\begin{theorem}[\cite{baumgartner}] \label{baumgartner}
		Assume $\gch$. Let $\theta\leq \kappa\leq\lambda$ be infinite cardinals with $\theta$ regular. Then there is a cardinal-preserving forcing $\po$ such that in the generic extension by $\po$ we have a $(<\theta)$-almost disjoint family $\{A_\alpha \mid \alpha < \lambda\}$ contained in $[\kappa]^\kappa$: for every $\alpha < \beta < \lambda$, $|A_\alpha \cap A_\beta| < \theta$. 
	\end{theorem}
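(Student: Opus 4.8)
The plan is to construct the almost disjoint family by a forcing that adds the $A_\alpha$'s generically via finite approximations, using a product or iteration of length $\lambda$. Since $\theta$ is regular and we are assuming $\gch$, the natural conditions to force with are partial functions specifying initial segments of the characteristic data of the sets $A_\alpha$, together with finitely much information about which pairs have already been ``pushed apart.'' Concretely, I would let a condition be a pair $(s, F)$ where $F \in [\lambda]^{<\theta}$ is the finite (or $(<\theta)$-small) set of indices currently under consideration and $s$ assigns to each $\alpha \in F$ a bounded subset $s(\alpha) \in [\kappa]^{<\kappa}$ representing an initial segment of $A_\alpha$, with the coherence requirement that these initial segments already witness $(<\theta)$-almost disjointness on $F$: for all $\alpha < \beta$ in $F$ we have $|s(\alpha) \cap s(\beta)| < \theta$, and moreover this intersection is ``frozen'' in the sense that no element to be added later will land in both. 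Extension is by end-extending each $s(\alpha)$ and enlarging $F$.

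The first step is to verify that the generic object yields sets of the correct size, namely $A_\alpha \in [\kappa]^\kappa$ for each $\alpha < \lambda$; this follows from a density argument showing that for each $\alpha$ and each $\delta < \kappa$ one can extend any condition to place an ordinal above $\delta$ into $A_\alpha$. The second and central step is the chain condition: I would aim to show $\po$ is $\kappa^+$-c.c.\ (or satisfies the appropriate closure plus chain condition) so that all cardinals are preserved. Under $\gch$ the number of possible finite supports and initial-segment data is controlled, and a standard $\Delta$-system argument on the supports $F$, combined with the observation that two conditions with root-matched data can be amalgamated, should give the chain condition. The almost disjointness of the final family is then built into the conditions: since each pair $(\alpha,\beta)$ eventually lies in some $F$ and the intersection $|s(\alpha)\cap s(\beta)|<\theta$ is preserved under all further extensions, genericity delivers $|A_\alpha \cap A_\beta| < \theta$ in the extension.

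The hard part will be simultaneously guaranteeing that the intersections stay below $\theta$ \emph{and} that the $A_\alpha$ reach full size $\kappa$, because these pull in opposite directions: to keep $A_\alpha$ large we must keep adding ordinals, but each new ordinal added to $A_\alpha$ risks increasing some $|A_\alpha \cap A_\beta|$. The mechanism that resolves this is the regularity of $\theta$ together with the bookkeeping forcing $\theta$-many ``collisions'' at most per pair: I would arrange the conditions so that once an ordinal is designated as belonging to $A_\alpha$ it is explicitly forbidden from later entering any $A_\beta$ with $\beta \neq \alpha$ that is already in the support, thereby capping each pairwise intersection. Making this freezing coherent across the whole iteration — so that a given ordinal's membership is decided consistently and the accumulated constraints never obstruct the density arguments needed for size $\kappa$ — is the delicate bookkeeping that the regularity of $\theta$ and the $\gch$ cardinal arithmetic are there to support.
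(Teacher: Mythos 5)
First, a caveat: the paper does not prove this theorem; it is quoted from Baumgartner, so I can only assess your sketch on its own terms. As written, it has two genuine gaps, and the second one is where the entire difficulty of Baumgartner's theorem lives. The first gap is the chain condition. You allow the values $s(\alpha)$ to be arbitrary bounded sets in $[\kappa]^{<\kappa}$, hence of size $\geq\theta$ whenever $\theta<\kappa$. Then the forcing is not $\kappa^+$-c.c.\ and no $\Delta$-system argument can make it so: fix a single $a\in[\kappa]^{\theta}$ and for each $\alpha<\lambda$ let $p_\alpha$ be the condition with domain $\{\alpha\}$ and $s(\alpha)=a$. Any common extension $r$ of $p_\alpha$ and $p_\beta$ satisfies $a\subseteq r(\alpha)\cap r(\beta)$, contradicting the requirement $|r(\alpha)\cap r(\beta)|<\theta$; so $\{p_\alpha\mid\alpha<\lambda\}$ is an antichain of size $\lambda$. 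Note that the obstruction lies entirely off the root of any $\Delta$-system, so ``root-matched data can be amalgamated'' is simply false here. To salvage the c.c.\ one must keep the values of size $<\theta$ (then intersections between values of distinct conditions are automatically of size $<\theta$, and under $\gch$ one can thin $\kappa^+$ many conditions to agree exactly on the root and amalgamate); the density argument for $|A_\alpha|=\kappa$ and the freezing of each $A_\alpha\cap A_\beta$ still go through with small values, so that part of your plan is fine.

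The second gap is that $\kappa^+$-c.c.\ only preserves cardinals $\geq\kappa^+$, while the theorem promises preservation of \emph{all} cardinals. Your forcing is at best ${<}\theta$-closed (domains and values of size $<\theta$, $\theta$ regular), which protects cardinals $\leq\theta$, but it is provably not $\kappa$-c.c.\ when $\theta<\kappa$ and $\lambda\geq2$: for $\gamma<\kappa$ the conditions with domain $\{0,1\}$ and $s(0)=s(1)=\{\gamma\}$ are pairwise incompatible, since a common extension of the $\gamma$ and $\delta$ versions would have to freeze $A_0\cap A_1$ at $\{\gamma\}$ and at $\{\delta\}$ simultaneously while containing both. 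So nothing in your sketch addresses the cardinals in the interval $(\theta,\kappa]$, and that is precisely the delicate part of Baumgartner's argument. Relatedly, the tension you single out as ``the hard part'' --- keeping intersections below $\theta$ while pushing each $A_\alpha$ up to size $\kappa$ --- is actually the easy part: given a condition with domain $F$ and any $\delta<\kappa$, one picks $\gamma>\delta$ outside $\bigcup_{\beta\in F}s(\beta)$, a set of size $<\theta\leq\kappa$, and adds it to $s(\alpha)$ without disturbing any frozen intersection. The real work, absent from the proposal, is showing the intermediate cardinals survive.
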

	Baumgartner's proof in particular shows that it is consistent that there there are arbitrarily large families in $[\omo]^{\aone}$ and even in $[\omt]^{\atwo}$ which are $(<\azero)$-almost disjoint, that is, almost disjoint modulo finite. 
	
	Using the Erd{\H o}s-Rado-Kurepa Theorem (also sometimes called the Erd{\H o}s-Rado Theorem), we see that for example, a $\lambda$-sized $(< \azero)$-almost disjoint family in $[\omo]^\aone$ implies that $2^\azero \geq \lambda$. In \cite{baushe} it is also shown that a generic extension obtained by adding arbitrarily many Cohen reals to a model of $2^\azero = \aone$  does not contain an $(<\azero)$-almost disjoint family in $[\omo]^\aone$ of size $\atwo$.
	
	Zapletal in his thesis (see \cite{zapletal}) proved a related result about such `strongly' almost disjoint families with some additional properties, but we do not go into the details here. We do however mention the motivation for Zapletal as it will allow us to introduce the next result we shall discuss.
	
	The result of Baumgartner's suggests a general investigation of the possible quasiorders contained within spaces of the form $[\kappa]^\kappa$ or ${}^\kappa\kappa$ modulo the ideal $[\kappa]^{<\theta}$. Hechler's Theorem (see \cite{hechler}) solves this question completely for the usual Baire space and see \cite{cummingsshelah} and \cite{rinot39} for similar results for the other spaces we have mentioned.
	
	Then Baumgartner's result can be interpreted as a result about the `width' of such spaces, so Hajnal and Szentmikl\'{o}ssy asked the next natural question, what about their `depth'? In order to make things precise we need to introduce some notation which we shall use throughout this article.
	
	Let $X, Y$ be sets of ordinals, $X$ infinite, and let $\theta$ be an infinite cardinal such that $\theta < |X|$. For $f, g \in \fsp{X}{Y}$, let $f \ll_\theta g$ denote that $|\{\xi \in X \mid f(\xi) \geq g(\xi)\}| < \theta$. Note that in this case $|\{\xi \in X \mid f(\xi) < g(\xi)\}| = |X|$.
	
	The question of Hajnal and Szentmikl\'{o}ssy which motivated Zapletal was the following.
	\begin{question}[pg.\ 435 of \cite{hajnal}]\label{hajnalquestion}
		Is it consistent that there is a chain in $({}^{\omega_1}\omega_1, \ll_{\aleph_0})$ of length $\omega_2$? That is, is it consistent that there is a sequence $\langle f_\alpha \mid \alpha< \omega_2\rangle$ of elements in ${}^{\omega_1}\omega_1$ such that $\alpha< \beta< \omega_2$ implies that $f_\alpha \ll_{\aleph_0} f_\beta$?
	\end{question}
	Note that while we use the term `chain' above and throughout the paper as it is more intuitively clear, the term `depth' is also used for a related concept, for example in \cite{shelah}. To translate between the two, the depth is the supremum of the length of well-ordered chains in a partially ordered set. In particular, the above question could be phrased as asking if it is consistent that the depth of the partial order $({}^{\omega_1}\omega_1, \ll_{\aleph_0})$ is greater than $\omega_2$. We shall however always use the term `chain' but since we shall only be interested in well-ordered chains, it is straightforward to translate between our terminology and Shelah's. 
	
	Hajnal also mentions another related question.
	\begin{question}[pg.\ 435 of \cite{hajnal}]\label{hajnalsetsquestion}
		Is it consistent that there is a chain of sets $\left\langle X_\alpha \mid \alpha < \omt\right\rangle$ in $[\omo]^\aone$ such that for $\alpha < \beta< \omt$, $X_\alpha \setminus X_\beta$ is finite, and $|X_\beta \setminus X_\alpha| = \aone$?
	\end{question}
	As pointed out by Hajnal, given a family as above, $\{X_{\beta +1} \setminus X_\beta \mid \beta < \omt\}$ is an almost disjoint modulo finite family in $[\omo]^\aone$ as in Baumgartner's Theorem~\ref{baumgartner}.
	
	In order to see the relation between the two questions, consider the following weakening of $\ll_\theta$: for $f, g \in \fsp{X}{Y}$, let $f <_\theta g$ denote that $|\{\xi \in X \mid f(\xi) > g(\xi)\}| < \theta$ and $|\{\xi \in X \mid f(\xi) < g(\xi)\}| = |X|$.  It is clear then that $f \ll_\theta g$ implies that $f <_\theta g$, and that Question~\ref{hajnalsetsquestion} asks about the consistent existence of an $\omega_2$-chain in $({}^{\omega_1}2, <_{\aleph_0})$. 
	
	In \cite{koszmidersets} Koszmider was able to answer Question~\ref{hajnalsetsquestion} affirmatively.
	\begin{theorem}[\cite{koszmidersets}]\label{koszmidersettheorem}
		Assuming $\square_\omo$, there is a ccc forcing which adds an $\omt$ length chain in $(\fsp{\omo}{2}, <_\azero)$ which is to say, adds a sequence of sets $\seq{X_\alpha \mid \alpha < \omt}$ in $[\omo]^\aone$ such that for $\alpha < \beta< \omt$, $X_\alpha \setminus X_\beta$ is finite, and $|X_\beta \setminus X_\alpha| = \aone$. In particular, it is consistent that there is an $\omt$ length chain in $(\omobaire, <_{\azero})$.
	\end{theorem}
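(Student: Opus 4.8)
The plan is to add the chain by a forcing $\po$ whose conditions are finite approximations to the sets $X_\alpha$, using a fixed $\square_\omo$-sequence to coordinate, coherently across the $\omt$ levels, the finitely many exceptions that the relation $<_\azero$ permits. So fix in the ground model a $\square_\omo$-sequence $\seq{C_\delta \mid \delta < \omt,\ \delta \text{ limit}}$: each $C_\delta$ is club in $\delta$ with $\otp(C_\delta) \leq \omo$, and $C_\gamma = C_\delta \cap \gamma$ whenever $\gamma \in \acc(C_\delta)$. A condition $p$ will consist of a finite partial function from $\omt \times \omo$ into $2$, where $p(\alpha, \xi) = 1$ records the commitment $\xi \in X_\alpha$, together with enough bookkeeping to enforce two rules: first, a monotonicity-modulo-exceptions rule stating that for $\alpha < \beta$ both active in $p$ and $\xi$ not among the finitely many exceptions $p$ has recorded for the pair, the commitment $\xi \in X_\alpha$ propagates to $\xi \in X_\beta$; and second, that no extension of $p$ may introduce new exceptions for pairs already active. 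The role of the $\square_\omo$-sequence is to prescribe, level by level, which coordinates may ever be exceptional for a given pair, so that this bookkeeping is coherent and the exception sets stay finite.

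Next I would check, by routine density arguments, that a sufficiently generic filter $G$ yields sets $X_\alpha = \{\xi < \omo \mid p(\alpha, \xi) = 1 \text{ for some } p \in G\}$ with the three required features. Genericity in the $\omo$-coordinate makes each $X_\alpha$ uncountable, hence an element of $[\omo]^\aone$; the monotonicity-modulo-exceptions rule, together with the finiteness of each recorded exception set, makes $X_\alpha \setminus X_\beta$ finite for every $\alpha < \beta$; and a density argument that, given any condition and any $\alpha < \beta$, adjoins a previously undecided coordinate $\xi$ with $\xi \in X_\beta$ and $\xi \notin X_\alpha$ makes $|X_\beta \setminus X_\alpha| = \aone$.

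The heart of the matter, and the step I expect to be the main obstacle, is the ccc. Given $\omo$-many conditions, the strategy is first to thin out to a $\Delta$-system simultaneously in the ordinal coordinates and in the $\omo$-coordinates, and then to normalise the finite traces so that any two surviving conditions $p$ and $q$ agree over the common root and are order-isomorphic off it. The difficulty is to amalgamate $p$ and $q$ into a single condition: a commitment $\xi \in X_\alpha$ coming from $p$ and a commitment $\xi \in X_\gamma$ coming from $q$, with $\alpha \neq \gamma$, could force incompatible values at the levels lying between them, creating an uncontrolled new exception. This is exactly where $\square_\omo$ is indispensable: the coherence $C_\gamma = C_\delta \cap \gamma$ for $\gamma \in \acc(C_\delta)$ lets one align the exception-bookkeeping of $p$ with that of $q$ along a single square structure, so that the values propagated to the intermediate levels agree and $p \cup q$, suitably completed, is again a condition. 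I expect the amalgamation lemma to be the technically demanding part, since it must simultaneously respect the finitely many exceptions of both conditions and the rule forbidding new ones; the appeal to $\square_\omo$ is essential here, this being a consistency result rather than a theorem of \zfc.

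Finally, the ``in particular'' clause is immediate: viewing each characteristic function $\chi_{X_\alpha}$ as an element of $\fsp{\omo}{\omo}$ via $2 \subseteq \omo$, the relation $<_\azero$ depends only on the pointwise comparison of values and is therefore unchanged, so $\seq{\chi_{X_\alpha} \mid \alpha < \omt}$ is a chain in $(\fsp{\omo}{\omo}, <_\azero)$.
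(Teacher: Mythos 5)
This theorem is quoted from Koszmider's paper \cite{koszmidersets}; the present paper gives no proof of it, so there is nothing internal to compare your attempt against. Judged on its own terms, your proposal correctly identifies the overall shape of the known argument --- a forcing with finite conditions, a $\Delta$-system plus amalgamation argument for the ccc, and an essential use of $\square_\omo$ to make amalgamation possible --- and the ``in particular'' clause is handled correctly. But there is a genuine gap, and you have located it yourself: the forcing is never actually defined, and the ccc is never proved. Saying that a condition carries ``enough bookkeeping'' to record exceptions and that the square sequence ``prescribes which coordinates may ever be exceptional for a given pair'' does not specify a partial order. Without a precise definition one cannot even check the easy claims: the density argument for $|X_\beta\setminus X_\alpha|=\aone$ already requires knowing that a condition can always be extended by a new coordinate at level $\beta$ without violating the propagation rule at the other finitely many active levels, and the amalgamation lemma cannot be set up at all.

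The concretely missing idea is the mechanism by which $\square_\omo$ enters. In Koszmider's argument the square sequence is not consulted inside the conditions; it is used beforehand to construct an auxiliary coherent family of maps $e_\alpha\colon\alpha\to\omo$ for $\alpha<\omt$, coherent modulo finite (for $\alpha<\beta$ the functions $e_\alpha$ and $e_\beta\restriction\alpha$ differ on a finite set), and these maps are built into the definition of a condition: they translate the $\omo$-coordinates attached to one level $\alpha$ into those attached to a higher level $\beta$, so that after thinning $\aone$-many conditions to a $\Delta$-system of a fixed isomorphism type, the translations contributed by two conditions $p$ and $q$ agree up to a finite set that can be absorbed into the exception sets, and $p$ and $q$ amalgamate. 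Your sketch gestures at ``aligning the exception-bookkeeping along a single square structure,'' but without some such concrete coherent structure derived from $\square_\omo$ the step you yourself flag as the heart of the matter cannot be carried out. As it stands the proposal is a plausible strategy outline, not a proof.
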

	We do not give the definition of $\square_\omo$ here as we will not require it in what follows. Later, Koszmider was able to answer Question~\ref{hajnalquestion} as well. 
	\begin{theorem}[\cite{koszmiderfunctions}] \label{koszmiderfunctheorem}
		It is consistent that  $(\omobaire, \ll_{\azero})$ contains an $\omt$ length chain.
	\end{theorem}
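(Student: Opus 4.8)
The plan is to build the chain by a single ccc forcing with finite conditions, working in a ground model of \gch{} together with $\square_{\omo}$ (as in Theorem~\ref{koszmidersettheorem}); such a model exists, e.g.\ $\gu$. Fix a $\square_{\omo}$-sequence $\seq{C_\delta \mid \delta< \omt \text{ limit}}$, so each $C_\delta$ is club in $\delta$ with $\otp(C_\delta)\leq \omo$ and $C_\gamma = C_\delta\cap\gamma$ whenever $\gamma$ is a limit point of $C_\delta$. A condition $p$ in $\po$ is a finite approximation to the desired matrix: a finite set $a_p\in[\omt]^{<\azero}$ of rows, a finite set $d_p\in[\omo]^{<\azero}$ of coordinates, values $s^p_\alpha\colon d_p\func\omo$ for $\alpha\in a_p$, together with finite ``exceptional sets'' $E^p_{\alpha,\beta}\in[\omo]^{<\azero}$ for $\alpha<\beta$ in $a_p$, subject to the demand that $s^p_\alpha(\xi)< s^p_\beta(\xi)$ for every $\xi\in d_p\setminus E^p_{\alpha,\beta}$. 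Extension $q\leq p$ means $a_q\supseteq a_p$, $d_q\supseteq d_p$, each $s^q_\alpha$ end-extends $s^p_\alpha$, and $E^q_{\alpha,\beta}\supseteq E^p_{\alpha,\beta}$. The generic functions are $f_\alpha=\bigcup_{p\in G}s^p_\alpha$, and the intended reading of $E_{\alpha,\beta}$ is that it bounds the set where domination fails.

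First I would dispatch the easy density facts. For a fixed $\alpha$ and $\xi$, the set of conditions with $\alpha\in a_p$ and $\xi\in d_p$ is dense: given $p$, one adjoins $\alpha$ (resp.\ $\xi$) and assigns values at the new coordinate so as to respect the finitely many strict inequalities demanded off the exceptional sets, which is possible because the rows in $a_p$ are linearly ordered and one may simply choose strictly increasing values. Hence each $f_\alpha$ is a total function in $\omobaire$. The content of the theorem is then that $\{\xi\in\omo\mid f_\alpha(\xi)\geq f_\beta(\xi)\}$ is \emph{finite} for $\alpha<\beta$, i.e.\ $f_\alpha\ll_{\azero}f_\beta$; by construction this set is contained in $\bigcup_{p\in G}E^p_{\alpha,\beta}$, so it suffices to force this union to be finite.

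The heart of the argument, and the main obstacle, is to run a $\Delta$-system argument establishing the ccc in such a way that amalgamating two conditions introduces \emph{no new} exceptional coordinates, so that $E_{\alpha,\beta}$ stabilises. Given $\omo$ many conditions, one thins out using the $\Delta$-system lemma on the finite sets $a_p$ and $d_p$ to obtain a common root $(a^*,d^*)$ and a fixed isomorphism type of the data over the root. The difficulty is that for rows $\alpha\in a_p\setminus a^*$ and $\beta\in a_q\setminus a^*$ the values $s^p_\alpha$ and $s^q_\beta$ were chosen independently on the shared coordinates $d^*$, so a blind amalgamation would be forced to record each clash $s^p_\alpha(\xi)\geq s^q_\beta(\xi)$ as a new exception, and across the generic these could accumulate to an infinite set. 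This is exactly the phenomenon that separates $\ll_{\azero}$ from the weaker relation $<_{\azero}$ produced by Theorem~\ref{koszmidersettheorem}: characteristic functions agree on a large set, so the naive forcing cannot yield domination modulo finite.

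To defeat this, the plan is to make the value commitments \emph{coherent} rather than free, using the $\square_{\omo}$-sequence to pin down $s_\alpha(\xi)$ from the trace of $\alpha$ through the clubs $C_\delta$, so that at each coordinate the assignment $\alpha\mapsto s_\alpha(\xi)$ is increasing on each coherent block and, for any fixed pair $\alpha<\beta$, fails to be increasing for only finitely many $\xi$. Concretely, one builds into the notion of condition that the value at a coordinate is computed from an $\otp$-rank along the relevant $C_\delta$; coherence ($C_\gamma=C_\delta\cap\gamma$) then guarantees that two conditions agreeing on the root already respect the strict order for all pairs drawn from $a_p\setminus a^*$ and $a_q\setminus a^*$ except at coordinates already lying in $d^*$, so the amalgamation adds exceptions only within the finite root. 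This yields both the ccc and, via a genericity argument using the stabilisation of $E_{\alpha,\beta}$, the finiteness of $\{\xi\mid f_\alpha(\xi)\geq f_\beta(\xi)\}$. Since $\po$ is ccc of size $\atwo$ over a model of \gch{}, cardinals are preserved and $\seq{f_\alpha\mid \alpha<\omt}$ is the desired $\omt$-chain in $(\omobaire,\ll_{\azero})$.
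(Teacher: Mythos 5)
This statement is quoted from Koszmider's paper \cite{koszmiderfunctions}; the present paper gives no proof of it, so your attempt must be measured against Koszmider's actual argument. Your sketch correctly isolates the crux: in a finite-condition forcing with exceptional sets $E_{\alpha,\beta}$, the $\Delta$-system amalgamation threatens to add new exceptions at every coordinate of the shared root, and if that happens generically the set $\{\xi \mid f_\alpha(\xi)\geq f_\beta(\xi)\}$ ends up infinite. But the step you offer to defeat this --- ``the value at a coordinate is computed from an $\otp$-rank along the relevant $C_\delta$; coherence then guarantees that two conditions agreeing on the root already respect the strict order'' --- is an assertion, not an argument, and it is precisely the point where the real work lies. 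A $\square_{\omo}$-sequence gives you coherence along a single tower of clubs below each $\delta<\omt$; it does not by itself produce, for each coordinate $\xi<\omo$, an assignment $\alpha\mapsto s_\alpha(\xi)$ that is order-preserving off a finite set for every pair $\alpha<\beta<\omt$, nor does it tell you how to amalgamate two conditions whose new rows interleave arbitrarily in $\omt$. What Koszmider actually uses is a simplified $(\omo,1)$-morass: a two-dimensional system of order-preserving maps between countable approximations to $\omt$, which is exactly the gluing device needed to transport value commitments from one condition to another without creating clashes outside the root. The later reworkings by Mitchell and Veli{\v c}kovi{\'c} replace the morass by chains of models of two types, working over $\gu$ or under the negation of a weak form of Chang's Conjecture. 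In all of these, the hypothesis and the machinery are strictly heavier than the $\square_{\omo}$ used for the set version (Theorem~\ref{koszmidersettheorem}), and the paper itself emphasizes this contrast: the set construction does not adapt to $\ll_{\azero}$, which is why Question~\ref{hajnalquestion} remained open after Question~\ref{hajnalsetsquestion} was solved.

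A secondary but related gap: you assert that the union $\bigcup_{p\in G}E^p_{\alpha,\beta}$ can be ``forced to be finite'' via a ``genericity argument using the stabilisation of $E_{\alpha,\beta}$.'' Stabilisation is not a consequence of the definition of the poset; it has to be built into the conditions (in Koszmider's setup the analogue of the exceptional set for a pair is determined once both ordinals appear in a condition and is never enlarged), and proving that extension and amalgamation can always respect this frozen exceptional set is again where the morass is indispensable. As written, your poset allows $E^q_{\alpha,\beta}\supsetneq E^p_{\alpha,\beta}$, and a routine density argument would then make the union infinite. So the proposal identifies the right obstacle but does not overcome it; the missing idea is the morass (or an equivalent side-condition structure), not the square sequence.
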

	We point out that Mitchell \cite{mitchell}, and Veli{\v c}kovi{\'c} \cite{velickovic} following Mitchell, have given alternate presentations of the result of Koszmider using the method of `models of two types' (see \cite{neeman}). The former starting from $\gu$, the latter assuming the negation of a weak form of Chang's Conjecture (we do not define this exact weak form as we will not require it, but in Definition~\ref{defnchang} we define Chang's Conjecture).
	
	In contrast to the results of Baumgartner, the results of Koszmider are specific to the ideal $[\aone]^{<\azero}$, so in particular an ideal of the form $[\kappa]^{<\theta}$ when $\theta^+ =\kappa$. Is it possible to prove a result as general as Baumgartner's for chains without the assumption of $\theta^+ =\kappa$? 
	
	All the results in this paper are negative answers to questions in this spirit. Our starting point was a result of Shelah which implies that one cannot have $\ll_\theta$-chains in situations where the size of the gap between $\theta$ and $\kappa$ is finite but larger than one. Focusing on the smallest case where Shelah's theorem applies we have the following. 
	\begin{theorem}[\cite{shelah}] \label{introshelah}
		For every infinite cardinal $\theta$, there are no chains in $\left({}^{\theta^{++}}{\theta^{++}}, \ll_{\theta}\right)$ of length $\theta^{+++}$.
	\end{theorem}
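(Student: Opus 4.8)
The plan is to argue by contradiction: suppose $\langle f_\alpha \mid \alpha < \theta^{+++}\rangle$ is a $\ll_\theta$-increasing chain in $\fsp{\theta^{++}}{\theta^{++}}$, and for $\alpha < \beta$ set $E_{\alpha\beta} = \{\xi < \theta^{++} \mid f_\alpha(\xi) \geq f_\beta(\xi)\}$, so that $E_{\alpha\beta} \in [\theta^{++}]^{<\theta}$ by the definition of $\ll_\theta$. First I would record the \emph{coherence} of these error sets: for $\alpha < \beta < \gamma$ one has $E_{\alpha\gamma} \subseteq E_{\alpha\beta} \cup E_{\beta\gamma}$, since if $f_\alpha(\xi) \geq f_\gamma(\xi)$ while $f_\alpha(\xi) < f_\beta(\xi)$, then $f_\beta(\xi) > f_\gamma(\xi)$. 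This cocycle condition is the one combinatorial trace of the ordering that survives, and the goal is to leverage it, together with the fact that all values lie below $\theta^{++}$, into an impossible object.

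A natural first reduction is to pass to ``monochromatic columns''. For each coordinate $\xi$, since $\theta^{+++}$ is regular and the range of $\alpha \mapsto f_\alpha(\xi)$ has size at most $\theta^{++} < \theta^{+++}$, some value $v_\xi$ is attained on a set $A_\xi \in [\theta^{+++}]^{\theta^{+++}}$; writing $C_\alpha = \{\xi \mid \alpha \in A_\xi\}$, the equality $f_\alpha(\xi) = f_\beta(\xi) = v_\xi$ for $\alpha, \beta \in A_\xi$ forces $\xi \in E_{\alpha\beta}$, whence $|C_\alpha \cap C_\beta| < \theta$ for $\alpha \neq \beta$. Thus the chain yields a $(<\theta)$-almost disjoint family $\langle C_\alpha \mid \alpha < \theta^{+++}\rangle$ in $[\theta^{++}]^{\leq\theta^{++}}$. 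I do not expect this to suffice on its own: by Baumgartner's Theorem~\ref{baumgartner} such almost disjoint families are consistently this large, so any proof must retain the \emph{order} on the values, not merely the pattern of equalities. This is also where the hypothesis that the gap between $\theta$ and $\kappa = \theta^{++}$ is at least two must enter, since for gap one Koszmider's Theorems~\ref{koszmidersettheorem} and~\ref{koszmiderfunctheorem} produce chains of the forbidden length.

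Accordingly, the main line of attack keeps the ordinal structure via exact upper bounds. I would fix a continuous $\in$-increasing chain $\langle M_i \mid i < \theta^{+++}\rangle$ of elementary submodels of some $H(\chi)$ containing the sequence, with $M_i \cap \theta^{+++}$ an ordinal $\delta_i$, and analyse the characteristic functions of the $M_i$ along the cofinal subsequence $\langle f_{\delta_i}\rangle$. Applying Shelah's trichotomy to this $<_J$-increasing sequence, with $J = [\theta^{++}]^{<\theta}$, I would seek an exact upper bound $g$ and control the cofinalities $\cf(g(\xi))$; the bounded range then presents the chain of length $\theta^{+++}$ as a scale on a product of at most $\theta^{++}$ regular cardinals, each at most $\theta^{++}$, and the two-cardinal gap should be what forces this pcf configuration to be too short to support a chain of length $\theta^{+++}$. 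The main obstacle is precisely this last step: the sequence has length exactly $\kappa^+ = \theta^{+++}$, which sits at the delicate threshold for the existence of an exact upper bound, so the trichotomy cannot be invoked as a black box, and the intermediate cardinal $\theta^+$ must be used to rule out the ``chaotic'' alternatives and to pin down the cofinalities. I expect that once the cofinality analysis is in hand, the contradiction with the bound $\theta^{++}$ on the values is immediate, and that essentially all the difficulty is concentrated in extracting the exact upper bound and its cofinality structure from the gap hypothesis.
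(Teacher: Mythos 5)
Your preliminary observations are correct (the cocycle relation $E_{\alpha\gamma}\s E_{\alpha\beta}\cup E_{\beta\gamma}$, the extraction of a $(<\theta)$-almost disjoint family from monochromatic columns, and the correct diagnosis that this family alone cannot yield a contradiction because of Theorem~\ref{baumgartner}), but the actual engine of your proof is missing, and the route you sketch for it faces two concrete obstructions that you flag without resolving. First, Shelah's trichotomy theorem for $<_J$-increasing sequences in ${}^A\mathrm{Ord}$ requires the length to be a regular cardinal strictly greater than $|A|^+$; here $|A|=\theta^{++}$ and the length is exactly $|A|^+=\theta^{+++}$, so the theorem simply does not apply, and no substitute mechanism for producing an exact upper bound is offered. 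Second, even if one granted an exact upper bound $g$, every value of $g$ is at most $\theta^{++}$, so every cofinality $\cf(g(\xi))$ is at most $|A|$; this is precisely the regime in which the pcf scale analysis gives nothing (the standard arguments need $\cf(g(\xi))>|A|$ on a $J$-positive set). Worse, the conclusion you hope to reach --- ``a chain of length $\theta^{+++}$ presented as a scale on a product of regular cardinals each at most $\theta^{++}$, modulo $[\theta^{++}]^{<\theta}$'' --- is not a contradiction: it is a restatement of the object you assumed to exist. So the step you describe as ``immediate'' is in fact the entire content of the theorem, and the proposal does not prove it.

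For comparison, the argument actually used (Shelah's, and its strengthening in Section~\ref{sectionweakchains} of this paper) is elementary and avoids exact upper bounds entirely. One fixes a family $\mathcal B$ of $\theta$-sized subsets of $\theta^{++}$ of cardinality $\theta^{++}$ meeting every large set in a set of size $\theta$ (the computation $\meet(\theta^{++},\theta)=\theta^{++}<\theta^{+++}$ from Lemma~\ref{pumpup}), a suitable family $\mathcal C$ of subsets of the value space, and considers the convex equivalence relations $\equiv^C_B$ obtained by rounding values up into $C$ and comparing on $B$. Since there are fewer than $\theta^{+++}$ such relations, Lemma~\ref{convexlemma} gives a club of indices with the $0$-$1$ property; one then shows some $\equiv^C_B$ separates club-many indices while the corresponding restricted, rounded functions are everywhere nondecreasing along $\theta^+$ consecutive pairs, and a pigeonhole on $B\times C$ (of size $\theta$) against these $\theta^+$ pairs yields the contradiction. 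The gap-two hypothesis enters exactly where you predicted it must, but through the inequality $\theta^+<\cf(\theta^{++})$ used to bound the union of $\theta^+$ many error sets, not through any pcf structure. If you want to complete a proof along your own lines, you would need to replace the trichotomy step by this kind of direct combinatorial separation; as written, the proposal has no valid path to the contradiction.
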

	We strengthen this result to $<_\theta$-chains and hence to sets.
	\begin{thma} 
		Let $\theta$ be an infinite cardinal.
		\begin{enumerate}
			\item There are no chains in $(\fsp{\theta^{++}}{\theta^{++}}, <_{\theta})$ of length $\theta^{+++}$.
			\item In particular, there is no sequence $\langle S_\alpha\mid\alpha<\theta^{+++}\rangle$ of sets in $[\theta^{++}]^{\theta^{++}}$ 
			such that, for all $\alpha<\beta<\theta^{+++}$:
			\begin{enumerate}
				\item $|S_\alpha\setminus S_\beta|<\theta$;
				\item $|S_\beta\setminus S_\alpha|=\theta^{++}$. 
			\end{enumerate}
		\end{enumerate}
	\end{thma}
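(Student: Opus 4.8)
My first move is to dispatch (2) using (1). Given a putative sequence $\langle S_\alpha\mid\alpha<\theta^{+++}\rangle$ as in (2), let $f_\alpha$ be the characteristic function of $S_\alpha$, viewed in $\fsp{\theta^{++}}{\theta^{++}}$. Then $\{\xi:f_\alpha(\xi)>f_\beta(\xi)\}=S_\alpha\setminus S_\beta$ has size $<\theta$ and $\{\xi:f_\alpha(\xi)<f_\beta(\xi)\}=S_\beta\setminus S_\alpha$ has size $\theta^{++}$, so $f_\alpha<_\theta f_\beta$, and (1) forbids such a sequence. It is worth recording at the outset that $<_\theta$ is a strict partial order: if $f<_\theta g$ then the full-size set $\{f<g\}$ equals $\{g>f\}$, which $g<_\theta f$ would require to have size $<\theta$, giving asymmetry; and transitivity follows because, for $f<_\theta g<_\theta h$, the set $\{f<g\}\cap\{g\le h\}$ still has size $\theta^{++}$ and is contained in $\{f<h\}$. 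Consequently a $<_\theta$-chain is precisely a sequence that is strictly increasing in the preorder $\le_J$ of domination modulo $J:=[\theta^{++}]^{<\theta}$, consisting of $\theta^{+++}$ pairwise $=_J$-distinct functions; so (1) is the assertion that $(\fsp{\theta^{++}}{\theta^{++}}/{=_J},\le_J)$ carries no strictly increasing chain of length $\theta^{+++}$.

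The tempting route is to derive (1) from Shelah's Theorem~\ref{introshelah} by upgrading $<_\theta$ to $\ll_\theta$. Since the codomain is an ordinal, one could try to break ties by passing to $g_\alpha(\xi)=f_\alpha(\xi)\cdot\theta+h_\alpha(\xi)$ with $h_\alpha(\xi)<\theta$, which stays inside $\theta^{++}$ and preserves every strict inequality $f_\alpha(\xi)<f_\beta(\xi)$; the role of $h$ would be to linearise the coordinates on which $f_\alpha=f_\beta$. This cannot work in general: a $<_\theta$-chain may have a set $C$ of $\theta^{++}$ coordinates on which every $f_\alpha$ is constant, the entire full-size strict increase being carried by $\theta^{++}\setminus C$. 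No tie-breaker taking fewer than $\theta^{++}$ values can order $\theta^{+++}$ indices on such a coordinate, so $g_\alpha\ll_\theta g_\beta$ must fail on $C$. For the same reason one cannot simply thin to a $\ll_\theta$-subchain of length $\theta^{+++}$: the graph on $\theta^{+++}$ joining $\alpha,\beta$ when $|\{f_\alpha=f_\beta\}|\ge\theta$ can be complete (take all $f_\alpha$ to agree on a fixed set of size $\theta$), and no partition relation yields the desired homogeneous set at $\theta^{+++}$. A genuinely different, and necessarily ZFC, argument is therefore required, since Theorem~\ref{introshelah} itself is proved without cardinal arithmetic assumptions.

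The plan is instead a reflection argument. Fix a $<_\theta$-chain $\langle f_\alpha\mid\alpha<\theta^{+++}\rangle$ and a large regular $\chi$, and choose $M\prec H(\chi)$ with $\theta^{++}+1\subseteq M$, $|M|=\theta^{++}$, and $\langle f_\alpha\mid\alpha<\theta^{+++}\rangle\in M$. As $|M|<\theta^{+++}=\cf(\theta^{+++})$, the ordinal $\delta:=\sup(M\cap\theta^{+++})$ lies below $\theta^{+++}$ and $\cf(\delta)\le\theta^{++}$. For every $\beta$ with $\delta\le\beta<\theta^{+++}$ and every $\alpha\in M\cap\theta^{+++}$ we have $f_\alpha<_\theta f_\beta$, so $f_\beta$ is a $\le_J$-upper bound of the reflected subchain $\langle f_\alpha\mid\alpha\in M\cap\theta^{+++}\rangle$. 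I would then show that, modulo $=_J$, there are at most $\theta^{++}$ many upper bounds that can occur as continuations of this subchain, each coded by an object belonging to $M$. Granting this, the assignment $\beta\mapsto[f_\beta]_{=_J}$ on the interval $[\delta,\theta^{+++})$ takes at most $\theta^{++}$ values; but by the strict-order discussion above these classes are pairwise distinct, while the interval has size $\theta^{+++}$ --- a contradiction.

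The heart of the matter, and the step I expect to be hardest, is the bound on the number of continuations: controlling, in ZFC and without assuming closure of $M$ under $<\theta$-sequences, how many $=_J$-classes can $\le_J$-extend a $\le_J$-increasing sequence of length $\cf(\delta)\le\theta^{++}$. This is where the two-cardinal gap between $\theta$ and $\theta^{++}$ must be exploited, just as in Theorem~\ref{introshelah}, and where the combinatorics of the ideal $[\theta^{++}]^{<\theta}$ --- the relevant covering and meeting numbers, together with an exact-upper-bound analysis of $\le_J$-increasing sequences in the spirit of PCF theory --- enters. Establishing that these meeting numbers are at most $\theta^{++}$, and that it is the full-size strict increase in the definition of $<_\theta$ (rather than mere domination modulo $J$) which keeps the $=_J$-classes genuinely distinct along the continuation, is precisely the extra work needed to push Shelah's $\ll_\theta$ result down to $<_\theta$, and hence to sets.
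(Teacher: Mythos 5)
Your reduction of (2) to (1) via characteristic functions is exactly the paper's, and your preliminary observations --- that $<_\theta$ is a strict partial order, and that one cannot in general upgrade a $<_\theta$-chain to a $\ll_\theta$-chain so as to invoke Shelah's Theorem~\ref{introshelah} directly --- are correct and well taken. But the main argument for (1) has a genuine gap: all of the combinatorial content is concentrated in the unproven claim that at most $\theta^{++}$ many $=_J$-classes ``can occur as continuations'' of the reflected subchain $\langle f_\alpha\mid\alpha\in M\cap\theta^{+++}\rangle$, ``each coded by an object belonging to $M$''. As stated this is false: if $g$ is any $<_\theta$-upper bound of the subchain, then partitioning $\theta^{++}$ into $\theta^{++}$ disjoint pieces $T_i$ of size $\theta$ and setting $g_S(\xi):=g(\xi)+1$ for $\xi\in\bigcup_{i\in S}T_i$ and $g_S(\xi):=g(\xi)$ otherwise produces, as $S$ ranges over subsets of $\theta^{++}$, a family of $2^{\theta^{++}}\ge\theta^{+++}$ pairwise $=_J$-inequivalent $<_\theta$-upper bounds. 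So mere upper-boundedness over $M$ yields no counting, and no coding mechanism is supplied. The refinement you would actually need --- that the tail $\langle f_\beta\mid\delta\le\beta<\theta^{+++}\rangle$, which is itself a $<_\theta$-chain of full length $\theta^{+++}$, realises at most $\theta^{++}$ classes --- is essentially the theorem being proved, so the reflection step does not reduce the problem; you have correctly located the obstacle but not overcome it.

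For comparison, the paper's proof is concrete and uses no elementary submodels. Fix $\mathcal B$ witnessing $\meet^-(\theta^{++},\theta)=\theta^{++}$ and $\mathcal C$ witnessing $\cof([\theta^{++}]^\theta)=\theta^{++}$ (both computed in Lemma~\ref{pumpup}), and for $B\in\mathcal B$, $C\in\mathcal C$ consider the convex equivalence relations $\equiv^C_B$ on $\lambda=\theta^{+++}$, obtained by rounding each $f_\alpha$ up to $C$ and comparing on $B$ modulo $[\kappa]^{<\theta}$. There are only $\theta^{++}<\lambda$ such relations, so Lemma~\ref{convexlemma} gives a club $D\s\lambda$ with the 0-1 property; Claim~\ref{01claim} then produces, for any $\epsilon<\kappa$, a pair $(B,C)$ with $B\s\kappa\setminus\epsilon$ on which all elements of $D$ are pairwise inequivalent. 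Choosing $\epsilon$ above the (bounded, since $\cf(\theta^{++})=\theta^{++}>\theta^+$) errors among the first $\theta^+$ elements of $D$, one extracts for each $\alpha<\theta^+$ a witness $(\tau_\alpha,\zeta_\alpha)\in B\times C$ with $f^C_{i_\alpha}(\tau_\alpha)=\zeta_\alpha<f^C_{i_{\alpha+1}}(\tau_\alpha)$, and since $|B\times C|\le\theta<\theta^+$ the pigeonhole principle contradicts the everywhere-monotonicity of the $f^C_{i_\alpha}$ on $B$. It is this rounding-plus-pigeonhole device, not an exact-upper-bound analysis, that exploits the gap between $\theta$ and $\theta^{++}$; to salvage your framework you would need a continuation-counting lemma of comparable strength, which is precisely the missing step.
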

	
	Taking $\theta= \aleph_0$ this tells us that there cannot be an $\omthr$ chain of sets in $[\omt]^\atwo$ strongly increasing modulo finite. Shelah's result implies that there are no $\omthr$ chains of functions in $(\fsp{\omt}{\omt}, \ll_\azero)$. 
	
	We prove some related results. As we have hinted above, one can phrase a general form of the question of Hajnal and Szentmikl\'{o}ssy.
	\begin{question}\label{generalhajnal}
		Let $\theta< \kappa$ be infinite cardinals. What are the possible lengths of chains in $({}^\kappa\kappa, \ll_\theta)$?
	\end{question}
	In response, we have the following.
	\begin{thmb}
		Let $\theta< \kappa$ be infinite cardinals with $\kappa$ a limit cardinal but not a cardinal fixed point. Then there are no chains in $({}^\kappa\kappa, \ll_\theta)$ of length $\kappa^+$.
	\end{thmb}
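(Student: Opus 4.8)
The plan is to argue by contradiction, restrict to a function space on a regular domain below $\kappa$, and then recast the problem as a bound on Galvin--Hajnal ranks, the point being that the hypothesis that $\kappa$ is not a fixed point is exactly what forces these ranks to stay below $\kappa^+$.

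First I would suppose that $\seq{f_\alpha \mid \alpha<\kappa^+}$ is a $\ll_\theta$-chain in ${}^\kappa\kappa$. Since $\kappa$ is a limit cardinal strictly above $\theta$, there is a regular cardinal $\mu$ with $\theta<\mu<\kappa$ (for instance $\mu=\theta^+$). Fix any $D\s\kappa$ with $|D|=\mu$. Restricting every $f_\alpha$ to $D$ only shrinks each exceptional set $\{\xi : f_\alpha(\xi)\geq f_\beta(\xi)\}$, so $\seq{f_\alpha\restriction D \mid \alpha<\kappa^+}$ is again a $\ll_\theta$-chain, now in ${}^D\kappa$; as $\ll_\theta$ is irreflexive the functions stay pairwise distinct and the length is still $\kappa^+$. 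Identifying $D$ with $\mu$, I have a $\ll_\theta$-chain of length $\kappa^+$ in ${}^\mu\kappa$. Writing $I=[\mu]^{<\theta}$, the relation $\ll_\theta$ on ${}^\mu\kappa$ is precisely the strict order $f<_I g \iff \{\xi : f(\xi)\geq g(\xi)\}\in I$ used to define Galvin--Hajnal ranks, so it suffices to bound $\|\cdot\|_I$ on functions $\mu\to\kappa$.

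I would first note that $<_I$ is well-founded on ${}^\mu\Ord$: along a putative descending $\omega$-chain the union of the countably many exceptional sets, each of size $<\theta$, has size $\leq\theta<\mu$ (as $\theta$ is infinite), leaving a coordinate on which the ordinal values strictly descend---impossible. Hence $\|f\|_I=\sup\{\|g\|_I+1 : g<_I f\}$ is defined, and a strictly $<_I$-increasing sequence of length $\kappa^+$ yields, for each $\alpha<\kappa^+$, a function with $\|f_\alpha\|_I\geq\alpha$; in particular some $f_\alpha$ has rank exceeding $\kappa$. The goal is therefore the rank bound $\|f\|_I<\kappa^+$ (equivalently $\|f\|_I\leq\kappa$) for every $f\in{}^\mu\kappa$, which contradicts the chain.

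The heart of the argument---and the step I expect to be the main obstacle---is this rank bound, which I would prove by a Galvin--Hajnal-style induction via the aleph-indexing. Write $\kappa=\aleph_\delta$; since $\kappa$ is a limit cardinal that is not a fixed point we have $\delta<\kappa$, so the cardinals below $\kappa$ form a set of size $|\delta|<\kappa$. For $f:\mu\to\kappa$ let $a(\xi)<\delta$ be the aleph-index of $f(\xi)$, i.e.\ $\aleph_{a(\xi)}\leq f(\xi)<\aleph_{a(\xi)+1}$, and relate $\|f\|_I$ to the within-band ranks together with the rank of the index function $a:\mu\to\delta$, whose range is the \emph{strictly smaller} ordinal $\delta<\kappa$. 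This is where non-fixed-pointhood is indispensable: it makes the indices land below $\kappa$, so the recursion genuinely descends. Controlling the successor-ordinal steps (which raise the rank by one), the continuous steps at ordinals of cofinality $>\mu$, and---the delicate case---the jumps at ordinals $\eta$ of cofinality $\leq\mu$ (handled using the $\cf(\theta)$-completeness of $I$ and a sub-rank on ${}^\mu(\cf\eta)$ with $\cf\eta<\kappa$), one accumulates at most $\kappa$-many contributions each of size $<\kappa$, keeping $\|f\|_I$ below $\kappa^+$. At a fixed point this descent breaks down, since the indices return to $\kappa$---precisely the regime in which Koszmider-type chains of length $\kappa^+$ can appear---so the hypothesis cannot be dropped.
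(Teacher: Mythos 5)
Your opening move already gives away the resource that makes the theorem true. Restricting the chain to a set $D$ of size $\mu<\kappa$ does preserve the chain (that part is fine), but it converts the problem into: \emph{there is no $\ll_\theta$-chain of length $\kappa^+$ in $({}^{\mu}\kappa,\ll_\theta)$ for some regular $\mu\in(\theta,\kappa)$} --- and with your suggested choice $\mu=\theta^+$ this is precisely the regime the paper is at pains to avoid. The paper's proof of Theorem B goes through Theorem~\ref{strongchains}, whose hypothesis $\theta^+<\kappa$ refers to the \emph{domain}, and whose key step (Claim~\ref{strongchainsclaim}) uses the full domain of size $\kappa$ to manufacture $\kappa$-many distinct values $f_{i_{j(\alpha)}}(\xi_\alpha)$ in the range; only because the resulting set $Y$ has size $\kappa$ can a meeting family of size $\kappa$ on the target catch it. Once the domain is cut down to $\mu$, one can produce only $\mu$-many such values, the meeting-number hypotheses needed become $\meet(\kappa,\chi,\mu,\vartheta)<\kappa^+$, and nothing in the paper (or, to my knowledge, in ZFC) delivers this. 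Worse, for $\mu=\theta^+$ the subposets $({}^{\theta^+}\aleph_{\gamma+1},\ll_\theta)$ sitting inside your restricted space are exactly of Koszmider type: it is consistent that $({}^{\omega_1}\omega_1,\ll_{\aleph_0})$ has chains of length $\omega_2$, and no ZFC bound on the depth of $({}^{\omega_1}\aleph_{\gamma+1},\ll_{\aleph_0})$ for $\gamma\geq 1$ is established anywhere in this paper. So the restricted statement is at best a substantial open strengthening of Theorem B, not a lemma you may assume.

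The second, compounding gap is that the rank bound $\|f\|_I\leq\kappa$ for all $f\in{}^\mu\kappa$ is strictly stronger than the absence of $\kappa^+$-chains (a well-founded order can have rank far above the supremum of its chain lengths, since witnesses to high rank need not be comparable), and your sketch of its proof does not engage with where the difficulty actually sits. The non-fixed-point hypothesis only tames the index function $a:\mu\to\delta$ with $\delta<\kappa$; it says nothing about the ``within-band'' ranks, i.e.\ the ranks of functions $\mu\to[\aleph_\gamma,\aleph_{\gamma+1})$ modulo $[\mu]^{<\theta}$. Bounding those below $\kappa$ (let alone summing $\kappa$-many of them to stay at $\kappa$ in ordinal, not cardinal, arithmetic) is again the Koszmider-type problem for domain $\theta^+$, which the phrase ``controlling the successor-ordinal steps \dots one accumulates at most $\kappa$-many contributions'' does not address. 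By contrast, the paper never shrinks the domain: it fixes a regular $\vartheta\in(\theta,\kappa)$ above the aleph-index of $\kappa$ so that Lemma~\ref{pumpup}(vi) gives $\meet(\kappa,\vartheta)=\kappa$, runs the convex-equivalence-relation machinery to get a club $D$ with the $0$-$1$ property, uses the full domain to force a single pair $(B,C)$ with $|B\times C|<\varkappa$ to separate $\varkappa$-many consecutive elements of $D$, and derives a contradiction by pigeonhole. If you want to salvage a rank-theoretic proof, the induction must be run with the domain kept at size $\kappa$ throughout.
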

	We point out that the hypotheses in this theorem imply that in fact $\theta^+ < \kappa$, and this is crucial. Except for one case each in Theorem~\ref{weakchainone} and Theorem~\ref{weakchaintwo}, none of the results proved in this article apply to the situation when $\theta^+ =\kappa$, and about these two exceptions see the discussion of Theorem~\ref{cctheorem}.
	
	Shelah has made the following conjecture.
	\begin{conjecture}[\cite{shelah}] \label{shelahconj} Let $\theta$ be an infinite successor cardinal. For every $\mu \geq \theta$, there are no chains in $({}^{\theta^{++}}\mu, \ll_{\theta})$ of length $\mu^+$.
	\end{conjecture}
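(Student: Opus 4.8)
The plan is to read the conjecture as a bound on the \emph{depth} of the reduced power $\fsp{\theta^{++}}{\mu}$ ordered by $\ll_\theta$, i.e.\ by strict domination modulo the $\theta$-complete ideal $I := [\theta^{++}]^{<\theta}$ on $\theta^{++}$, and to prove by induction on $\mu \geq \theta$ the statement $P(\mu)$: \emph{there is no $\ll_\theta$-increasing sequence of length $\mu^+$ in $\fsp{\theta^{++}}{\mu}$}. The cases split according to the position of $\cf(\mu)$ relative to the index cardinal $\theta^{++}$. I would treat first the case $\cf(\mu) > \theta^{++}$. Here every $f \in \fsp{\theta^{++}}{\mu}$ has $|\ran(f)| \leq \theta^{++} < \cf(\mu)$, so $\sup(\ran(f)) < \mu$ and hence $f \ll_\theta c_\eta$ for the constant function $c_\eta$ with $\eta = \sup(\ran(f)) + 1 < \mu$; thus the constants $\seq{c_\eta \mid \eta < \mu}$ are $\ll_\theta$-increasing and cofinal. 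Given a putative increasing sequence $\seq{f_\alpha \mid \alpha < \mu^+}$, I would pick $\eta_\alpha < \mu$ with $f_\alpha \ll_\theta c_{\eta_\alpha}$ and, using that $\mu^+$ is regular and $\mu^+ > \mu$, fix a single $\eta^* < \mu$ for which $\{\alpha \mid \eta_\alpha = \eta^*\}$ has size $\mu^+$. Redefining each such $f_\alpha$ to be $0$ on the fewer than $\theta$ coordinates where it reaches $\eta^*$ alters no $\ll_\theta$-relation and lands the subsequence inside $\fsp{\theta^{++}}{\eta^*}$, producing an increasing sequence of length $\mu^+ > (\eta^*)^+$ there; this contradicts $P(\eta^*)$ when $\eta^* \geq \theta$, and when $\eta^* < \theta$ it reduces to the elementary fact that $\ll_\theta$-chains in $\fsp{\theta^{++}}{\eta^*}$ for $\eta^* < \theta$ are short (for $\eta^* = 2$ one checks directly that such chains, read as sets in $[\theta^{++}]^{\theta^{++}}$, have length at most two).

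This leaves the two cases in which $\cf(\mu) \leq \theta^{++}$. When moreover $\mu \leq \theta^{++}$ --- that is, $\mu \in \{\theta, \theta^+, \theta^{++}\}$, all regular --- the value $\mu = \theta^{++}$ is covered by Theorem~A, since a $\ll_\theta$-chain is in particular a $<_\theta$-chain. For the two smaller regular values, where the codomain is no larger than the domain, the constants are no longer cofinal and a different argument is required. Here I would generalise the meeting-number computation that underlies Theorem~A: one analyses the columns $\seq{f_\alpha(\xi) \mid \alpha < \mu^+}$, each a $\mu$-valued sequence, and uses the $\theta$-completeness of $I$ to locate an $I$-positive set of coordinates carrying a genuinely monotone increase; the meeting number $\meetf$ of the resulting configuration then caps the number of realisable patterns by $\mu$, contradicting the length $\mu^+$. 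The expectation is that the arithmetic bounding $\meetf$ in these two cases is provable outright, as it is for $\mu = \theta^{++}$.

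The genuine difficulty, and what I expect to be the \textbf{main obstacle}, is the remaining case: $\mu$ singular with $\theta^{++} < \mu$ and $\cf(\mu) \leq \theta^{++}$. Here $\mu^+ > \theta^{++}$ is regular, so I would invoke Shelah's theory of exact upper bounds to replace a cofinal part of the sequence by a scale in a product $\prod_{\xi < \theta^{++}} \lambda_\xi$ modulo $I$, where each $\lambda_\xi := \cf(h(\xi)) \leq \mu$ is regular and $h$ is an exact upper bound; the existence of the sequence then forces this reduced product to have true cofinality $\mu^+$. The conjecture thus reduces to the pcf assertion that \emph{$\mu^+$ is never the true cofinality, modulo a $\theta$-complete ideal on $\theta^{++}$, of a product of at most $\theta^{++}$ regular cardinals below $\mu$ that is cofinal in $\mu$} --- equivalently, that the restricted pseudopower $\pp_{\Gamma(\theta^{++},\theta)}(\mu)$ does not jump to $\mu^+$ in this configuration. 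This is precisely where the hypothesis that $\theta$ is a successor cardinal must be used: the shortness of the interval $[\theta, \theta^{++}]$ and the regularity of $\theta$ are what make the relevant pcf generators localisable and keep the generator count below the first place where such a jump could occur. I regard establishing this pp-inequality in ZFC as the crux; a sensible intermediate theorem is to prove $P(\mu)$ for singular $\mu$ under a mild pcf hypothesis (such as an instance of the Shelah Weak Hypothesis, or the direct assumption that $\pp_{\Gamma(\theta^{++},\theta)}$ exhibits no such jump), which would render the final computation routine and confirm the conjecture in all the standard models.
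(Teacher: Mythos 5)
The first thing to say is that the statement you are proving is presented in the paper as an open conjecture of Shelah, not as a theorem: the paper's own progress on it is only Theorem~C, which covers $\mu<\theta^{+(\theta^{++})}$ (via Proposition~\ref{strongchainsprop} when $\mu<\theta^{++}$, and via Theorem~\ref{strongchains} together with Lemma~\ref{pumpup}(vi) otherwise), and the author explicitly calls this progress ``meagre'' and notes that it misses the PCF-motivated case where $\mu$ is the limit of an increasing $\theta^{++}$-sequence of regular cardinals above $\theta$. Your proposal does not close this gap either: the case you yourself flag as the crux ($\mu$ singular with $\theta^{++}<\mu$ and $\cf(\mu)\le\theta^{++}$, which is exactly the case the paper identifies as the motivating one) is not proved but only ``reduced'' to a pcf assertion, which you then propose to assume. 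So as it stands this is a research plan for an open problem, not a proof.

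Beyond that, the reduction target itself is misstated. For every singular $\mu$ one has $\pp(\mu)\ge\mu^+$; that is, there always \emph{is} a set of regular cardinals cofinal in $\mu$ whose product has true cofinality exactly $\mu^+$ modulo a suitable ideal, so the blanket assertion that ``$\mu^+$ is never the true cofinality of such a product'' is false as written, and the genuine question --- whether such a scale can be carried by (an extension of) the particular ideal $[\theta^{++}]^{<\theta}$ on an index set of size $\theta^{++}$ --- is essentially a restatement of the conjecture rather than a simplification of it. The passage to an exact upper bound also needs justification: Shelah's trichotomy requires hypotheses such as $\mu^+>2^{\theta^{++}}$ (or flatness on a club) that are unavailable when $\mu$ is close to $\theta^{++}$. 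Two smaller points: in your case $\cf(\mu)>\theta^{++}$ the pigeonhole lands the chain in $\fsp{\theta^{++}}{\eta^*}$ for an \emph{ordinal} $\eta^*<\mu$, and $P(|\eta^*|)$ does not apply directly since a bijection $\eta^*\leftrightarrow|\eta^*|$ does not preserve $\ll_\theta$; you would need to strengthen the induction hypothesis to arbitrary ordinal codomains (the paper's meeting-number arguments do tolerate this, but you must say so and set the induction up accordingly). And the cases $\mu\in\{\theta,\theta^+\}$ need no new meeting-number analysis: Proposition~\ref{strongchainsprop} already rules out chains of length $\mu+1$ whenever the codomain is smaller than the domain.
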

	Here our progress is quite meagre and in particular does not apply to the situation from PCF Theory which possibly motivated Shelah, when $\mu$ is the limit of an increasing $\theta^{++}$-sequence of regular cardinals greater than $\theta$. We remind the reader that for $\theta$ a cardinal and $\nu$ an ordinal, $\theta^{+\nu}$ denotes the $\nu$th cardinal after $\theta$. That is, if $\theta = \aleph_\xi$, then $\theta^{+\nu}: = \aleph_{\xi+\nu}$.
	\begin{thmc}
		Let $\theta \leq \mu$ be infinite cardinals with $\mu< \theta^{+{(\theta^{++})}}$. Then there are no chains in $({}^{\theta^{++}}\mu, \ll_{\theta})$ of length $\mu^+$.
	\end{thmc}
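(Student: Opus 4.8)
The plan is to prove, by induction on the ordinal $\gamma<\theta^{+(\theta^{++})}$, the a priori stronger statement that there is no $\ll_\theta$-chain of length $\lvert\gamma\rvert^+$ in $\fsp{\theta^{++}}{\gamma}$; taking $\gamma=\mu$ recovers the theorem, and phrasing the hypothesis over ordinal ranges is what lets the inductive step reduce to a strictly shorter initial segment of ordinals without having to worry about order types. The base cases, where $\lvert\gamma\rvert\le\theta$, I would settle by a direct double-counting argument, and the case $\gamma=\theta^{++}$ is exactly Theorem A (equivalently Shelah's Theorem~\ref{introshelah}), which I take as an anchor.

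For the counting argument, suppose $\seq{f_\alpha\mid\alpha<\theta^+}$ were $\ll_\theta$-increasing with each $f_\alpha\colon\theta^{++}\to\gamma$ and $\lvert\gamma\rvert=\theta$. Each column $\alpha\mapsto f_\alpha(\xi)$ maps the regular cardinal $\theta^+$ into a set of size $\theta$, so it is constant with some value $c_\xi$ on a set $A_\xi\subseteq\theta^+$ of full size $\theta^+$. Since there are $\theta^{++}$ coordinates but only $\theta$ possible values, I fix $c$ so that $I:=\{\xi\mid c_\xi=c\}$ has size $\theta^{++}$. Because $\theta^+\cdot\theta^+<\theta^{++}$, a point $\alpha_0$ must lie in $\theta^{++}$-many of the sets $A_\xi$ with $\xi\in I$, and repeating the count above $\alpha_0$ yields $\beta_0>\alpha_0$ lying in $\theta^{++}$-many of these. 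Then $f_{\alpha_0}(\xi)=f_{\beta_0}(\xi)=c$ for $\theta^{++}$-many $\xi$, contradicting $\lvert\{\xi\mid f_{\alpha_0}(\xi)\ge f_{\beta_0}(\xi)\}\rvert<\theta$. The mechanism is that the domain sits two cardinals above the range, which is exactly what makes the two-step pigeonhole bite.

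For the inductive step, given a chain $\seq{f_\alpha\mid\alpha<\lvert\gamma\rvert^+}$ in $\fsp{\theta^{++}}{\gamma}$, I would attach to each $\alpha$ its essential supremum $\mathrm{es}(f_\alpha):=\min\{\delta\le\gamma\mid\lvert\{\xi\mid f_\alpha(\xi)\ge\delta\}\rvert<\theta\}$. If $\lvert\gamma\rvert^+$-many $\alpha$ satisfy $\mathrm{es}(f_\alpha)<\gamma$, then, $\lvert\gamma\rvert^+$ being regular above $\lvert\gamma\rvert$, a single $\gamma_0<\gamma$ bounds $\lvert\gamma\rvert^+$-many of them modulo $<\theta$; clipping each such $f_\alpha$ to the values below $\gamma_0$ preserves $\ll_\theta$ and produces a chain of length $\lvert\gamma_0\rvert^+$ in $\fsp{\theta^{++}}{\gamma_0}$, contradicting the inductive hypothesis at the smaller ordinal $\gamma_0$. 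This disposes of every $\gamma$ for which the functions cannot be essentially cofinal—in particular every successor cardinal $\gamma=\rho^+$ with $\rho\ge\theta^{++}$, since an image of size $\theta^{++}<\rho^+$ cannot be cofinal in $\rho^+$.

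The remaining case—and the one I expect to be the genuine obstacle—is when, after thinning, every $f_\alpha$ is essentially unbounded in $\gamma$, which forces $\gamma$ to be a limit of cofinality at most $\theta^{++}$; the anchor $\gamma=\theta^{++}$ is the smallest instance, and larger limit $\gamma$ (for example $\theta^{+\omega}$) lie genuinely beyond Theorem A. Here clipping fails, and the natural move is to pass to the rank functions $r_\alpha(\xi):=\min\{\sigma\mid f_\alpha(\xi)<\theta^{+\sigma}\}$, which are $\le_\theta$-nondecreasing and essentially unbounded, and whose range is an ordinal of size at most $\theta^+$, strictly below the domain $\theta^{++}$ precisely because $\mu<\theta^{+(\theta^{++})}$. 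The trap is that the ranks alone retain too little information: a merely $\le_\theta$-increasing sequence into a small ordinal can be very long, so one cannot simply invoke a lower instance of the theorem. My plan is therefore to use the bound of fewer than $\theta^{++}$ ranks to pigeonhole a coordinate set of full size $\theta^{++}$ on which the rank profile \emph{and} a cofinally attained column value are simultaneously stabilised, and then to run the two-point counting argument of the base case inside that coordinate set against the $<\theta$ error ideal. Arranging this simultaneous stabilisation so that it survives the limit structure is the delicate heart of the argument, and it is exactly the inequality $\mu<\theta^{+(\theta^{++})}$—keeping the number of cardinal levels below the size of the domain—that keeps the required pigeonhole available.
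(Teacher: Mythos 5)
Your reductions are fine as far as they go: the base case for $|\gamma|\le\theta$ is a correct two-step pigeonhole (essentially the paper's Proposition~\ref{strongchainsprop}), and the clipping step via the essential supremum does validly push the problem down to a smaller ordinal whenever $\mu^+$-many of the $f_\alpha$ are essentially bounded. But the theorem lives entirely in the case you defer to the end --- $\gamma$ a limit of cofinality at most $\theta^{++}$ with all functions essentially cofinal (already $\gamma=\theta^{+\omega}$, the first instance not covered by your anchor) --- and there you have a plan, not a proof. Worse, the plan as described cannot work. The base-case counting argument hinges on the chain length being \emph{smaller} than the number of coordinates: each column $\alpha\mapsto f_\alpha(\xi)$ is constant on a large $A_\xi$ contained in the index set, and choosing one representative per $\xi$ is a map from $\theta^{++}$ coordinates into an index set of size $\theta^+$, which is what produces a single $\alpha_0$ lying in $\theta^{++}$-many $A_\xi$. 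Once $\mu\ge\theta^{++}$ the index set $\mu^+$ is strictly larger than the coordinate set $\theta^{++}$ and this pigeonhole is simply unavailable. Stabilising the rank $r_\alpha(\xi)=\min\{\sigma\mid f_\alpha(\xi)<\theta^{+\sigma}\}$ does not rescue it: there are indeed fewer than $\theta^{++}$ rank levels, but fixing a level only confines the column values to a cardinal interval $[\theta^{+\sigma},\theta^{+\sigma+1})$ of size up to $\mu$, so you still cannot reduce to $<\theta^{++}$ many candidate values, and the ``simultaneous stabilisation of rank profile and a cofinally attained column value'' is exactly the step you would need to invent and have not.

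For comparison, the paper does not induct on $\mu$ at all. It proves a single general theorem (Theorem~\ref{strongchains}) whose inputs are the meeting numbers $\meet(\theta^{++},\theta^+,\theta,\theta)$ and $\meet(\mu,\theta^+,\theta^{++},\theta)$, both computed to be as small as possible from the hypothesis $\mu<\theta^{+(\theta^{++})}$ via Lemma~\ref{pumpup}(vi). The two ingredients you are missing are: (a) the convex equivalence relations $\equiv^C_B$ together with a club $D\s\mu^+$ having the 0-1 property, which converts ``some pair of indices is separated'' into ``all pairs in $D$ are separated''; and (b) the recursion of Claim~\ref{strongchainsclaim}, which manufactures $\theta^{++}$-many \emph{distinct} values $f_{i_{j(\alpha)}}(\xi_\alpha)$ sandwiched between $f_{i_0}$ and $f_{i_{\theta^{++}}}$ --- this is the device that replaces your unavailable pigeonhole, because a set of size $\theta^{++}$ in $\mu$ can then be met in a set of size $\theta$ by a family of only $\mu$-many sets each of size $\le\theta$ (and it is here, not in your rank pigeonhole, that $\mu<\theta^{+(\theta^{++})}$ earns its keep). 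The final counting over $\theta^+$-many consecutive pairs against a set $B\times C$ of size $\theta$ is the part your intuition correctly anticipated. If you want to salvage your inductive framing, you would still have to reproduce something like (a) and (b) inside the cofinal limit case, at which point the induction buys you nothing over the direct argument.
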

	
	Apart from the remaining cases of Question~\ref{generalhajnal} and Conjecture~\ref{shelahconj}, whether Koszmider's results can be improved upon by constructing longer chains in $\left({}^{\omega_1}2, <_\azero\right)$ and $(\omobaire, \ll_{\azero})$ remains open as well. In \cite{zapletal} and \cite{shelah} one may find more open problems. Two other papers on topics related to this paper which we have not mentioned are \cite{laver} and \cite{koszmider2017constructions}. In the former, Laver shows, among other things, why one cannot approach the question of Hajnal and Szentmikl\'{o}ssy in the usual way as in the Baire space, which is to say, by iteratively adding functions dominating modulo finite. In the latter, Koszmider studies the morasses which he used to prove Theorem~\ref{koszmiderfunctheorem} in greater detail.

	We now describe the organisation of the paper. In Section~\ref{sectionprep} we collect the tools we will use throughout the paper. In Section~\ref{sectionweakchains} we shall focus on the space $({}^\kappa\mu, <_\theta)$ and prove Theorem A. In Section~\ref{sectionstrongchains} we focus on the space $({}^\kappa\mu, \ll_\theta)$ and prove Theorem B and Theorem C. 
	
	\section{Preparatory lemmata}\label{sectionprep}
	Our notation is standard for set theory. We shall use the Greek letters $\theta$, $\kappa$, $\mu$, $\lambda$, $\varkappa$, $\rho$, $\chi$, $\vartheta$ for cardinals, of which we remind the reader that the letters $\varkappa$ and $\vartheta$ are, respectively, variants of $\kappa$ and $\theta$. Except for $\mu$ which may occasionally be finite, all the others will always denote infinite cardinals, and of these $\lambda$ will always denote an infinite regular cardinal. We shall use the Greek letter $\pi$, possibly with subscripts, for bijections, and all other Greek letters (save for $\omega$) as well as $i$ and $j$ for ordinals, possibly finite. 
	
	For $f$ a function and $X$ a subset of its domain, $f \restriction X$ denotes the function obtained by restricting the domain of $f$ to $X$. If $f, f'$ are two functions with the same domain, when we say that $f \leq f'$ \emph{everywhere} we mean that for every element $x$ of their common domain, $f(x) \leq f'(x)$.
	\subsection{Cardinal invariants} \label{subsectioncardinals}
	We introduce some cardinal invariants we shall use in what follows. 
	\begin{definition}
		Let $\theta, \chi,\rho, \kappa$ be infinite cardinals with $\theta < \chi$ and $\theta \leq \rho$ and $\theta \leq \kappa$.
		Let $\meet(\kappa,  \chi ,\rho, \theta)$ denote the least size of a family $\mathcal B$ in $[\kappa]^{<\chi}$ such that for every $A \in [\kappa]^{\geq\rho}$ there is some $B \in \mathcal B$ such that $|B \cap A|\geq \theta$. As important special cases, let $\meet^-(\kappa, \theta)$ denote $\meet(\kappa,  \theta^+, \kappa, \theta)$, let $\meet(\kappa, \rho, \theta)$ denote $\meet(\kappa, \theta^+,\rho, \theta)$, and let $\meet(\kappa, \theta)$ denote $\meet(\kappa, \theta^+, \theta, \theta)$.
	\end{definition}
	\begin{definition} \label{defncof}
		Let $\theta$ and $\kappa$ be infinite cardinals with $\theta \leq \kappa$. Let $\cof([\kappa]^\theta)$ denote the least size of a family $\mathcal B$ in $ [\kappa]^\theta$
		such that for every $A\in[\kappa]^\theta$ there is some $B\in\mathcal B$ with $A\s B$.
	\end{definition}
	The cardinal $\cof([\kappa]^\theta)$ is the cofinality of the partially ordered set $([\kappa]^\theta, \s)$. It is also sometimes in the context of PCF theory called $\cov(\kappa,\theta^+, \theta^+, 2)$. One may find a detailed treatment of it in \cite[Chapter II, \S5]{shelah94}. The cardinal $\meet(\kappa, \chi , \rho,\theta)$ is somewhat less common: in the notation of \cite{she513}, $\meet(\kappa, \chi^+, \rho, \theta)= \ecf_{\rho, \chi, \theta}(\kappa, \chi)$ (see \cite[Claim~4.5(2)]{she513}); particular instances of $\meet(\kappa, \chi , \rho,\theta)$ appear, for example, in \cite[Chapter~IX,\S5]{shelah94}; see also the cardinal $\mathbf U_J[\mu]$ from (among others) \cite{she775}. The cardinal $\meet(\kappa, \theta)$ is more frequently found, especially the instances of it which follow from Shelah's Revised GCH (\cite{she460})---see for example \cite[Theorem~8.21]{abrahammagidor} and \cite[Theorem~8]{cummingsshelah}. In \cite{lambiehanson} a Galvin-Hajnal theorem for $\meet(\kappa, \theta)$ is proved. In our notation we have followed \cite{matet} where the invariant $\meet(\kappa, \theta)$ was considered (though we order the parameters so as to follow Shelah's convention for the covering numbers from \cite{shelah94}). In particular, the letter `m' stands for `meeting'. In \cite{saf51} a related cardinal invariant for `meeting' functions was considered.
	
	We calculate the values of these cardinal invariants in some simple circumstances. We were informed of the following lemma by Assaf Rinot. Below, when we talk about a family `witnessing' some instance of these cardinal invariants, we shall mean the following: in case the value of the invariant or an upper bound for it is known beforehand (though not necessarily specified), then the family is chosen so that it has size equal to this invariant; in case the value of the invariant is not known (so that we are in the process of obtaining a bound on its size), then we shall mean that the family satisfies the properties required to be a candidate for witnessing the invariant, so that the value of the invariant is at most the size of this family.
	
	Recall that for $\theta$ a cardinal and $\nu$ an ordinal, $\theta^{+\nu}$ denotes the $\nu$th cardinal after $\theta$. That is, if $\theta = \aleph_\xi$, then $\theta^{+\nu}: = \aleph_{\xi+\nu}$. 
	\begin{lemma} \label{pumpup}Let $\theta<\mu<\kappa$ be infinite cardinals.
		\begin{enumerate}
			\item  $\meet(\kappa', \chi',\rho', \theta') \geq \meet(\kappa, \chi ,\rho, \theta)$ for infinite cardinals $\kappa' \geq \kappa$, $\chi' \leq \chi$, $\rho' \leq \rho$,  and $\theta' \geq \theta$.
			\item $\meet^-(\theta^+,\theta)=\meet(\theta^+, \theta) = \theta^+$.
			\item $\kappa \leq \meet^-(\kappa, \theta) \leq \meet(\kappa ,\theta) \leq \cof([\kappa]^\theta)$.
			\item Let $\chi$ be a cardinal such that $\meet^-(\mu, \theta) \leq \chi$ and $\meet^-(\kappa, \mu) \leq \chi$. Then $\meet^-(\kappa, \theta) \leq \chi$.
			\item If $\rho \in [\theta, \mu)\cap \card$, then $\meet(\mu^+,\rho, \theta) \leq \mu^+ \cdot \meet(\mu, \rho, \theta)$.
			\item If $\rho \in [\theta, \kappa)\cap \card$ and $\kappa < \theta^{+{\cf(\rho)}}$, then $\meet(\kappa, \rho, \theta) = \kappa$. In particular, if $\kappa< \theta^{+\cf(\theta)}$, then $\meet(\kappa, \theta) = \kappa$.
			\item If $\kappa$ is a limit cardinal and $\meet(\chi, \theta) \leq \kappa$ for a tail of $\chi \in(\theta, \kappa)\cap \card$, then $\meet^-(\kappa, \theta) = \kappa$.
			\item If there are no cardinal fixed points in $(\theta,\kappa]$, then $\meet^-(\kappa, \theta) = \kappa$.
			\item If $\kappa$ is uncountable and not a cardinal fixed point then there is some $\chi < \kappa$ such that $\meet^-(\kappa, \chi) = \kappa$.
			\item $\cof([\theta^+]^\theta) = \theta^+$.
			\item Let $\chi$ be a cardinal such that $\cof([\mu]^\theta) \leq \chi$ and $\cof([\kappa]^\mu) \leq \chi$ then $\cof([\kappa]^\theta) \leq \chi$.
			\item If $\kappa<\theta^{+\omega}$, then $\cof([\kappa]^\theta)=\kappa$.
			\item Let $\chi: =\kappa^{<\theta}$. Then  $\cof([\kappa]^\theta) \leq \meet(\chi, \theta)$.
		\end{enumerate}
	\end{lemma}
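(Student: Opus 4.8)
The plan is to manufacture a cofinal family in $([\kappa]^\theta,\s)$ by taking \emph{unions} of the members of a witness for $\meet(\chi,\theta)$, after identifying $\chi=\kappa^{<\theta}$ with the set $[\kappa]^{<\theta}$ of small subsets of $\kappa$ (these have the same cardinality, by definition of $\chi$). Concretely, I would fix a bijection $\pi\colon\chi\to[\kappa]^{<\theta}$ and let $\bbb\s[\chi]^{\le\theta}$ witness $\meet(\chi,\theta)=\meet(\chi,\theta^+,\theta,\theta)$, so that $|\bbb|=\meet(\chi,\theta)$ and every $A^*\in[\chi]^{\ge\theta}$ meets some $B\in\bbb$ in a set of size $\ge\theta$. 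For each $B\in\bbb$ put $C_B:=\bigcup\pi[B]$; as $|B|\le\theta$ and each $\pi(\beta)$ has size $<\theta$, we have $C_B\in[\kappa]^{\le\theta}$. The candidate cofinal family is $\{C_B\cup\theta\mid B\in\bbb\}$, where adjoining $\theta$ (a subset of $\kappa$, since $\theta<\kappa$) merely inflates each set to size exactly $\theta$. Once cofinality is verified this gives $\cof([\kappa]^\theta)\le|\bbb|=\meet(\chi,\theta)$ at once.

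The heart of the argument is the verification of cofinality, and the key device is to feed the meeting number not the target set $A$ itself but its chain of initial segments. Given $A\in[\kappa]^\theta$, I would fix an injective enumeration $A=\{a_\xi\mid\xi<\theta\}$ and, for $\eta<\theta$, set $A_\eta:=\{a_\xi\mid\xi<\eta\}\in[\kappa]^{<\theta}$. The map $\eta\mapsto A_\eta$ is strictly $\s$-increasing, hence injective, so $A^*:=\pi^{-1}[\{A_\eta\mid\eta<\theta\}]$ is a subset of $\chi$ of size exactly $\theta$, and therefore a legitimate input to the meeting property. This produces $B\in\bbb$ with $|B\cap A^*|\ge\theta$, and since $|B|\le\theta$ the intersection has size exactly $\theta$; translating back through $\pi$, the index set $E:=\{\eta<\theta\mid\pi^{-1}(A_\eta)\in B\}$ has size $\theta$.

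The final step exploits the one feature that makes the whole scheme work uniformly in $\theta$, be it regular or singular: a subset of the cardinal $\theta$ of size $\theta$ is automatically unbounded in $\theta$, since any bounded subset is contained in some $\eta<\theta$ and hence has size $|\eta|<\theta$. Thus $\sup E=\theta$, and because the $A_\eta$ form a $\s$-increasing chain with union $A$, I obtain $\bigcup_{\eta\in E}A_\eta=\{a_\xi\mid\xi<\sup E\}=A$. Each such $A_\eta$ is one of the sets $\pi(\beta)$ with $\beta\in B$, so this union sits inside $C_B$; hence $A\s C_B\s C_B\cup\theta$, which is exactly cofinality.

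I expect the only genuine obstacle to be recognizing the right encoding, since the obvious attempt—handing the meeting number the family $[A]^{<\theta}$ of \emph{all} small subsets of $A$—fails: the $\theta$ small subsets it returns could all omit a fixed point of $A$, so their union need not recover $A$. Replacing an unstructured family by the linearly ordered chain of initial segments is precisely what upgrades ``meeting in a set of size $\theta$'' to ``hitting a cofinal, hence union-complete, subchain,'' and it is this that removes any need to split into cases according to $\cf(\theta)$. With the encoding in hand, the remaining bookkeeping ($|C_B|\le\theta$, $|A^*|=\theta$, the padding by $\theta$, and the well-definedness of $\meet(\chi,\theta)$ from $\theta\le\chi$) is entirely routine.
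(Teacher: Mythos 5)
Your argument addresses only clause (xiii) of the thirteen-part lemma; the remaining clauses are left unproven, so as a proof of the full statement it is incomplete. For clause (xiii) itself, your proof is correct and is essentially the paper's own argument: the paper encodes $A$ as the image of a function $h\colon\theta\to\kappa$ and feeds the set $\{\pi(h\restriction\xi)\mid\xi<\theta\}$ of initial segments to the meeting family, then uses exactly your observation that a size-$\theta$ subset of $\theta$ is unbounded, so the selected initial segments union up to $A$. The only differences are cosmetic: you identify $\chi$ with $[\kappa]^{<\theta}$ rather than ${}^{<\theta}\kappa$, and you explicitly pad the sets $C_B$ up to size $\theta$, a point the paper glosses over.
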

	\begin{proof}
		\begin{enumerate}
			\item Clear from the definitions.
			\item Since $\theta^+$ itself witnesses $\meet^-(\theta^+,\theta)$ and $\meet(\theta^+,\theta)$. 
			\item The first inequality is the only one which does not follow automatically from the definitions. So let $\langle S_i \mid i< \kappa\rangle$ be pairwise disjoint elements of $[\kappa]^\kappa$. Now if $\mathcal B$ witnesses $\meet^-(\kappa, \theta)$ and $|\mathcal B| = \chi < \kappa$ then we get a contradiction since $\chi \cdot\theta < \kappa$ so for some $i< \kappa$, $S_i$ has empty intersection with every element of $\mathcal B$.
			\item Let $\{B_\xi \mid \xi < \chi\}$ witness $\meet^-(\mu, \theta) \leq \chi$ and let $\{C_\nu \mid \nu < \chi\}$ witness $\meet^-(\kappa, \mu)\leq \chi$. For each $\nu < \chi$ let $\pi_\nu: \mu \leftrightarrow C_\nu$ be a bijection. Then $\{\pi_\nu``[B_\xi] \mid \nu < \chi, \xi < \chi\}$ witnesses $\meet^-(\kappa, \theta) \leq \chi$.
			\item Since $\mu^+$ is regular, any element of $[\mu^+]^\rho$ is bounded in $\mu^+$. So let $\chi:=\meet(\mu,\rho, \theta)$ and let $\{B_\xi \mid \xi < \chi\}$ witness this and for every $\nu \in [\mu, \mu^+)$ let $\pi_\nu: \mu \leftrightarrow \nu$ be a bijection. Then $\{\pi_\nu[B_\xi] \mid \nu \in[\mu, \mu^+), \xi < \chi\}$ witnesses $\meet(\mu^+,\rho, \theta)\leq \mu^+\cdot \chi$ by our observation. 
			\item We prove this by induction on $\kappa$. The base case is clear by (ii) and the successor case is clear by (v). For the limit case, as $\kappa< \theta^{+\cf(\rho)}$, we have that $\cf(\kappa) < \cf(\rho)$. It follows that if $A \in[\kappa]^\rho$, then for some $\chi < \kappa$, $|A \cap \chi| \geq \rho$. So now by the induction hypothesis for each $\chi \in (\theta, \kappa) \cap \card$, let $\{B^\chi_\xi \mid \xi< \chi\}$ witness that $\meet(\chi, \rho,\theta) = \chi$. So we then have that $\{B^\chi_\xi \mid \chi \in (\theta, \kappa) \cap \card, \xi< \chi\}$ witnesses that $\meet(\kappa,\rho, \theta) = \kappa$ by the observation.
			\item Let $\mathfrak a$ be an end segment of $(\theta, \kappa) \cap \card$ such that for every $\chi \in \mathfrak a$, $\meet(\chi, 
			\theta) \leq \kappa$ as witnessed by some $\{B^\chi_\xi \mid \xi< \kappa\}$. Let us verify that $\{B^\chi_\xi \mid \chi \in \mathfrak a, \xi< \kappa\}$ witnesses that $\meet^-(\kappa, \theta) = \kappa$. So let $A \in [\kappa]^\kappa$. Then there is some $\chi \in \mathfrak a$ such that $|A \cap \chi| \geq \theta$. So there is some $\xi< \kappa$ such that $|B^\chi_\xi \cap A \cap \chi| \geq \theta$.
			\item We shall prove this by induction on $\kappa$. The base case is clear by (ii) and the successor case is clear by (iv). So suppose that $\kappa$ is a limit cardinal, which is not a cardinal fixed point by our assumptions. Suppose that $\kappa = \aleph_\xi > \xi$. Let $\chi  \in (\theta, \kappa) \cap \card$ be a regular cardinal such that $\xi< \chi$. So then $\chi < \kappa < \chi^{+\chi}$. So for all $\rho \in (\chi, \kappa) \cap \card$ we have by (vi) that $\meet(\rho, \chi) = \rho$. So, by (vii) we have that $\meet^-(\kappa, \chi) = \kappa$. Now, the induction hypothesis tells us that $\meet^-(\chi, \theta) = \chi$. So we use (iii) and (iv) to conclude that $\meet^-(\kappa, \theta) = \kappa$ and continue the induction.
			\item If $\kappa$ is not a cardinal fixed point, then it is not a limit of cardinal fixed points either. So we can find a $\chi < \kappa$ such that there are no cardinal fixed points in $(\chi, \kappa]$. Then $\meet^-(\kappa, \chi) = \kappa$ by (viii).
			\item Again, $\theta^+$ serves as a witness.
			\item Let $\{B_\xi \mid \xi < \chi\}$ witness $\cof([\mu]^\theta) \leq \chi$ and let $\{C_\nu \mid \nu < \chi\}$ witness $\cof([\kappa]^\mu) \leq \chi$. For each $\nu < \chi$ let $\pi_\nu: \mu \leftrightarrow C_\nu$ be a bijection. Then $\{\pi_\nu``[B_\xi] \mid \nu < \chi, \xi < \chi\}$ witnesses $\cof([\kappa]^\theta) \leq \chi$.
			\item By induction using (iii) and (x) and (xi).
			\item Let $\pi:{}^{<\theta}\kappa \rightarrow \chi$ be a bijection. Let $\mathcal B$ be a family witnessing $\meet(\chi, \theta)$. Define
			$$\mathcal C:= \{\bigcup \im[\pi^{-1}[B]] \mid B \in \mathcal B\}.$$
			We claim that $\mathcal C$ witnesses $\cof([\kappa]^\theta)$. It is easy to see that $\mathcal C \s [\kappa]^\theta$. To verify the rest, suppose that $A \in [\kappa]^\theta$. Let $h: \theta \rightarrow \kappa$ be a function such that $\im(h) = A$. Let $Y:= \{\pi(h\restriction \xi) \mid \xi < \theta\}$.
			So then $ Y\in [\chi]^\theta$. So there is some $B \in \mathcal B $ such that $|B \cap Y| \geq \theta$. It follows that 
			$$Z:=\{\xi < \theta \mid h\restriction \xi \in \pi^{-1}[B]\}= \{\xi < \theta \mid \pi(h\restriction \xi) \in B\}$$ has size $\theta$, and in particular is unbounded in $\theta$. Now supposing that $\gamma \in A$, then there is some $\xi \in Z$ such that $\gamma \in \im(h \restriction \xi)$. It follows that $A \s \bigcup\im[\pi^{-1}[B]]$.\qedhere
		\end{enumerate}
	\end{proof}
	
	Beyond Lemma~\ref{pumpup} and the other results we have hinted at, the following result connects the cardinals $\meet(\kappa, \theta)$ and $\cof([\kappa]^\theta)$ to a standard hypothesis from PCF Theory. Below, the equivalence between (i) and (ii) is due to Shelah \cite[Theorem 6.3]{she420} and their equivalence with (iii) is due to Rinot \cite{saf05}. The reader may consult either of these papers for undefined notions. 
	\begin{theorem}[\cite{she420,saf05}]
		The following are equivalent:
		\begin{enumerate}
			\item Shelah's Strong Hypothesis, that is, $\pp(\kappa) = \kappa^+$ for every singular cardinal $\kappa$;
			\item for every pair of cardinals $\theta< \kappa$, $\cof([\kappa]^\theta) = \kappa$ if $\cf(\theta)> \cf(\kappa)$ and $\kappa^+$ otherwise;
			\item  for every pair of cardinals $\theta< \kappa$, $\meet(\kappa, \theta)= \kappa$ if $\cf(\theta) \neq \cf(\kappa)$ and $\kappa^+$ otherwise.
		\end{enumerate}
	\end{theorem}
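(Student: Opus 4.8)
The plan is to establish the two nontrivial equivalences separately: the passage between (i) and (ii) is powered by Shelah's $\cov$-versus-$\pp$ analysis, while the passage between (ii) and (iii) is a comparison of $\meet(\kappa,\theta)$ with $\cof([\kappa]^\theta)$ using the inequalities already assembled in Lemma~\ref{pumpup}. The single idea governing both is whether a set of size $\theta$ is forced to sit boundedly inside $\kappa$, a phenomenon controlled by how $\cf(\theta)$ and $\theta$ compare with $\cf(\kappa)$.

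For the implication from (i) to (ii) I would first reduce to singular $\kappa$: when $\kappa$ is regular every $A\in[\kappa]^\theta$ is bounded, and an induction on $\kappa$ using parts~(xi) and~(xii) of Lemma~\ref{pumpup} collapses $\cof([\kappa]^\theta)$ to $\kappa$. For singular $\kappa$ with $\cf(\kappa)=\delta$, if every member of $[\kappa]^\theta$ is still bounded then the same induction yields $\cof([\kappa]^\theta)=\kappa$. The substantial case is when genuinely unbounded $A\in[\kappa]^\theta$ occur: fixing an increasing continuous sequence $\langle\kappa_i\mid i<\delta\rangle$ of regular cardinals cofinal in $\kappa$, such an $A$ is determined, up to a bounded and hence already-covered remainder, by its sup-pattern $i\mapsto\sup(A\cap\kappa_i)\in\prod_{i<\delta}\kappa_i$. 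Covering all such $A$ thereby reduces to cofinally dominating $\prod_{i<\delta}\kappa_i$, whose cofinality is $\pp_\delta(\kappa)$; this identification is exactly Shelah's $\cov$-versus-$\pp$ theorem, and the hypothesis $\pp(\kappa)=\kappa^+$ then pins the value to $\kappa^+$. The converse is the contrapositive: a failure of SSH at some singular $\mu$ produces, through the same representation, a space of sup-patterns of cofinality $\pp(\mu)>\mu^+$, forcing $\cof([\mu]^{\cf(\mu)})>\mu^+$ and so violating (ii).

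For the equivalence of (ii) and (iii), part~(iii) of Lemma~\ref{pumpup} already gives $\kappa\leq\meet(\kappa,\theta)\leq\cof([\kappa]^\theta)$, so it remains to locate $\meet$ within $[\kappa,\kappa^+]$. The key combinatorial input is a concentration lemma: if $\cf(\theta)\neq\cf(\kappa)$ then every $A\in[\kappa]^{\geq\theta}$ has a subset of size $\theta$ that is bounded in $\kappa$. Indeed, writing $A=\bigcup_{i<\delta}(A\cap\kappa_i)$ with $\delta=\cf(\kappa)$, if every piece had size $<\theta$ while $|A|=\theta$, then the non-decreasing length-$\delta$ sequence of sizes would be cofinal in $\theta$, and since $\delta$ is regular its record values would have order type $\delta$, forcing $\cf(\theta)=\delta$. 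Given a bounded size-$\theta$ chunk, a meeting family is assembled from meeting families for the $\kappa_i$ exactly as in part~(vii) of Lemma~\ref{pumpup}, so $\meet(\kappa,\theta)=\kappa$ whenever $\cf(\theta)\neq\cf(\kappa)$. When $\cf(\theta)=\cf(\kappa)$ the concentration fails, and I would combine the upper bound $\meet(\kappa,\theta)\leq\cof([\kappa]^\theta)=\kappa^+$ coming from (ii) with the matching lower bound (no family of size $\kappa$ can meet every unbounded $A$ in a set of size $\theta$, again by the sup-pattern argument) to obtain $\meet(\kappa,\theta)=\kappa^+$; part~(xiii) of Lemma~\ref{pumpup}, namely $\cof([\kappa]^\theta)\leq\meet(\kappa^{<\theta},\theta)$, is what keeps the two invariants comparable once one checks that under SSH the auxiliary cardinal $\kappa^{<\theta}$ introduces no blow-up.

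The main obstacle is the unbounded case: identifying $\cof([\kappa]^\theta)$ with the cofinality of $\prod_{i<\delta}\kappa_i$, and hence with $\pp_\delta(\kappa)$, is the full force of Shelah's $\cov$-versus-$\pp$ theorem, which I would cite rather than reprove, and the corresponding $\pp$ lower bound for $\meet$ in the regime $\cf(\theta)=\cf(\kappa)$ is the crux of Rinot's contribution. A secondary but genuine difficulty is the bookkeeping of the term $\kappa^{<\theta}$ in part~(xiii) of Lemma~\ref{pumpup}, where the hypothesis $\pp(\kappa)=\kappa^+$ for singular $\kappa$ must be reinserted to ensure that the comparison between $\meet(\kappa,\theta)$ and $\cof([\kappa]^\theta)$ stays tight.
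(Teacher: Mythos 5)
The first thing to say is that the paper offers no proof of this theorem to compare against: it is imported wholesale, with the equivalence of (i) and (ii) credited to Shelah and the equivalence with (iii) to Rinot, and the reader is directed to those papers. So your proposal can only be judged as a free-standing argument, and as such it is a roadmap rather than a proof: the two steps that carry all the weight are precisely the two you defer to citation, namely the identification of $\cof([\kappa]^{\cf(\kappa)})$ for singular $\kappa$ with a $\pp$-type cofinality (Shelah's cov-versus-pp analysis, where one should add that the full identity is open for countable cofinality and what is actually used is the weaker implication available under the \emph{global} hypothesis SSH, via an induction on $\kappa$), and the lower bound $\meet(\mu,\cf(\mu))\geq\pp(\mu)$ needed both for the value $\kappa^+$ when $\cf(\theta)=\cf(\kappa)$ and for the implication from (iii) back to (i) (this is the substance of Rinot's paper). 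Everything you actually argue---the bounded/unbounded dichotomy, the concentration lemma for $\cf(\theta)\neq\cf(\kappa)$, the assembly of meeting families along $\langle\kappa_i\mid i<\delta\rangle$---is correct but is the routine part. Moreover, your proposed use of Lemma~\ref{pumpup}(xiii) to keep $\meet$ and $\cof$ comparable cannot work: $\kappa^{<\theta}$ is not controlled by SSH (already $\aleph_1^{<\aleph_1}=2^{\aleph_0}$ can be arbitrarily large under SSH), so $\cof([\kappa]^\theta)\leq\meet(\kappa^{<\theta},\theta)$ yields no useful bound; the standard way to close the cycle is (iii)$\Rightarrow$(i) by contraposition through the $\pp$ lower bound for $\meet$, and then (i)$\Rightarrow$(ii)$\Rightarrow$(iii).

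Second, and more importantly, your own (correct) analysis contradicts clause (ii) as printed, and you do not flag this. You observe that for regular $\kappa$ every $A\in[\kappa]^\theta$ is bounded and deduce $\cof([\kappa]^\theta)=\kappa$; but clause (ii) as stated places every regular $\kappa$ in the ``otherwise'' case (since $\cf(\theta)\leq\theta<\kappa=\cf(\kappa)$) and so asserts $\cof([\kappa]^\theta)=\kappa^+$, which already fails for $(\theta,\kappa)=(\aleph_0,\aleph_1)$ by Lemma~\ref{pumpup}(x). In the other direction, $\cof([\aleph_\omega]^{\aleph_1})\geq\aleph_{\omega+1}$ holds in ZFC (the sup-pattern argument you describe applies verbatim, since a set of size $\aleph_1$ can be unbounded in $\aleph_\omega$), yet $\cf(\aleph_1)>\cf(\aleph_\omega)$ would place this pair in the ``$=\kappa$'' case. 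The dividing line in (ii) must be whether $\theta<\cf(\kappa)$ (equivalently, whether every $A\in[\kappa]^\theta$ is bounded), not whether $\cf(\theta)>\cf(\kappa)$; the printed condition appears to be a misstatement, and a proof written against it will either prove a different theorem or, as yours does in the regular case, silently prove the corrected one while contradicting the asserted one.
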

	One may extract further bounds on these cardinals in terms of PCF-theoretic invariants from the literature, see for example the covering numbers in \cite[Chapter II, \S5]{shelah94} and \cite[Claim 1.2]{Sh:430}.
	Lastly, Magidor proved in \cite{magidor} that assuming the consistency of certain large cardinals, it is consistent that $\cof([\aleph_\omega]^{\aleph_0})> \aleph_{\omega+1}$, and Shelah proved in \cite{she137} that for every countable ordinal $\alpha$, it is consistent assuming the consistency of large cardinals that $\cof([\aleph_\omega]^{\aleph_0})> \aleph_{\alpha}$. It is known that large cardinals are required. See \cite{gitikmagidor} for details.
	
	\subsection{Convex equivalence relations}
	We shall say that an equivalence relation $\equiv$ on a linear order $L$ is \emph{convex} if its equivalence classes are convex: if $x <_L y <_L z$ are elements of $L$ and $x \equiv z$, then in fact $x\equiv y \equiv z$. Convex equivalence relations of the sort which appear in Lemma~\ref{convexexample} are the core idea in \cite{shelah} and here as well. 
	
	The following lemma is easy to prove and often occurs in PCF theory, but we give a proof anyway. Recall that for $E$ a set of ordinals, $\cl(E)$ denotes the closure of $E$ in $\sup(E)$, which is to say, $\cl(E): = E \cup \{\alpha < \sup(E) \mid \alpha = \sup(E \cap \alpha)\}$.
	\begin{lemma} \label{convexlemma}
		Let $\lambda$ be an uncountable regular cardinal.
		\begin{enumerate}
			\item If $\equiv$ is a convex equivalence relation on $\lambda$ and there is some $E \s \lambda$ unbounded in $\lambda$ and consisting of pairwise inequivalent elements (that is, ${[E]^2} \cap {\equiv} = \emptyset$), then there is some $ D\s \lambda$ which is a club consisting of pairwise inequivalent elements: ${[D]^2} \cap {\equiv} = \emptyset$.
			\item If $\equiv$ is a convex equivalence relation on $\lambda$, then there is $D \s \lambda$ a club such that either ${[D]^2} \s {\equiv}$ or ${[D]^2} \cap {\equiv} = \emptyset$.
			\item If $\mathcal E$ is a family of convex equivalence relations on $\lambda$ of cardinality less than $\lambda$, then there is $D\s \lambda$ a club such that for every ${\equiv} \in {\mathcal E}$ either ${[D]^2} \s {\equiv}$ or ${[D]^2} \cap {\equiv} = \emptyset$.
		\end{enumerate}
	\end{lemma}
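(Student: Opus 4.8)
The plan is to prove the three parts in order, with part (i) carrying essentially all of the combinatorial content and parts (ii) and (iii) following by short reductions.

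For part (i), the first observation I would record is that, because $\equiv$ is convex, every equivalence class is an interval of $\lambda$, and a class that is unbounded in $\lambda$ would have to be a final segment. But $E$ is cofinal in $\lambda$ and meets every final segment in at least two (indeed cofinally many) points, all of which are mutually equivalent; this contradicts that $E$ consists of pairwise inequivalent elements. Hence \emph{every} class is bounded, and I may define $g\colon \lambda \to \lambda$ by $g(\alpha) = \sup([\alpha]_\equiv)$, where $[\alpha]_\equiv$ is the class of $\alpha$. I then take $D$ to be the set of closure points of $g$, namely $\{\delta < \lambda \mid g(\alpha) < \delta \text{ for all } \alpha < \delta\}$; since $\lambda$ is regular and uncountable this is a club. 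For $\delta < \delta'$ both in $D$ I have $\delta \leq g(\delta) < \delta'$, so $\delta'$ lies strictly above $\sup([\delta]_\equiv)$ and therefore $\delta \not\equiv \delta'$. Thus ${[D]^2} \cap {\equiv} = \emptyset$, as required.

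For part (ii), I would argue by a dichotomy on whether some class is unbounded. If a class $C$ is unbounded then, being convex, it is a final segment, hence contains a club $D$, and any two elements of $D$ lie in $C$ and so are equivalent, giving ${[D]^2} \subseteq {\equiv}$. Otherwise every class is bounded, and a straightforward recursion (choosing $\gamma_{\xi+1}$ above $\sup([\gamma_\xi]_\equiv)$ and taking suprema at limits) produces an unbounded set of pairwise inequivalent elements; part (i) then supplies a club $D$ with ${[D]^2} \cap {\equiv} = \emptyset$. For part (iii), I would apply part (ii) to each ${\equiv} \in \mathcal E$ to obtain a club $D_\equiv$ homogeneous in the required sense, and then set $D = \bigcap_{\equiv \in \mathcal E} D_\equiv$. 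Since $|\mathcal E| < \lambda$ and $\lambda$ is regular and uncountable, $D$ is a club, and for each ${\equiv} \in \mathcal E$ we have ${[D]^2} \subseteq {[D_\equiv]^2}$, so $D$ inherits whichever of the two alternatives held for $D_\equiv$.

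The step I expect to require the most care is the reduction inside part (i)---specifically, verifying cleanly that boundedness of all classes follows from the hypothesis on $E$, and that the closure points of $g$ genuinely separate the classes. Everything after that is a routine appeal to the regularity of $\lambda$ and the fact that an intersection of fewer than $\lambda$ clubs is again a club.
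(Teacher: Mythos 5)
Your proof is correct, and its overall decomposition matches the paper's: part (ii) is obtained from part (i) via the dichotomy on whether some class is unbounded (an unbounded convex class being a final segment of $\lambda$, hence itself a club), and part (iii) by intersecting fewer than $\lambda$ clubs. The one place you genuinely diverge is the construction of the club in part (i). The paper stays with the witness $E$: it first shrinks $E$ so that each of its elements is inequivalent to every smaller ordinal, and then verifies, using convexity at the new limit points, that $\cl(E)$ is still pairwise inequivalent. You instead use $E$ only once, to conclude that every class is bounded, and then build the club canonically from $\equiv$ alone, as the set of closure points of $\alpha\mapsto\sup([\alpha]_{\equiv})$. Both routes rest on the same underlying observation --- convexity makes each class an interval, so any ordinal strictly above the supremum of a class is inequivalent to all of its members --- and both are complete. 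Your version buys a club that does not depend on the choice of $E$, which is close in spirit to the remark the paper makes immediately after the lemma, namely that when $\equiv$ has unboundedly many classes the set of least elements of the classes is itself a club. The paper's version avoids introducing the function $g$ and checking that it maps into $\lambda$, at the cost of the small preliminary refinement of $E$.
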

	\begin{proof}
		Assuming that we have proved (i), the proof of (ii) is easy: given $\equiv$ a convex equivalence relation on $\lambda$, in case there is some $E \s \lambda$ an unbounded set consisting of pairwise inequivalent elements, then there is a club consisting of pairwise inequivalent elements. If there is no unbounded subset of $\lambda$ consisting of inequivalent elements, then there must be an equivalence class of $\equiv$ which is an end segment of $\lambda$, and then this end segment is of course a club as well. Obtaining (iii) from (ii) is clear. So we only need to prove (i).
		
		So, suppose that $E \s \lambda$ is unbounded in $\lambda$ and consists of pairwise inequivalent elements. We can assume that if $\alpha \in E$, then for every $\beta < \alpha$, $\beta \not \equiv \alpha$. We shall show that $\cl(E)$ will be a club of $\lambda$ as required. So let, if possible, $\alpha \in \cl(E) \setminus E$. If $\beta \in E \cap \alpha$, then there is some $\gamma \in (\beta, \alpha) \cap E$. By the choice of $E$, $\beta \not \equiv \gamma$. So, as $\equiv$ is convex, for every $\beta < \alpha$, $\beta \not \equiv \alpha$. This implies that $\alpha$ is not equivalent to any element of $E \cap \alpha$. Now if $\beta \in E\setminus \alpha = E\setminus (\alpha+1)$, then by the choice of $E$, $\alpha \not \equiv \beta$. So $\cl(E)$ is a club as required.
	\end{proof}
	We note that the proof of (i) in fact tells us that if a convex equivalence relation $\equiv$ on a regular cardinal $\lambda$ has unboundedly many equivalence classes, then the natural `selector' for $\lambda/{\equiv}$, the set of least elements of each equivalence class, is in fact a club. We shall refer to a club with a property as in (iii) as having the \emph{0-1 property} with respect to the family of convex equivalence relations $\mathcal E$.
	
	We come to the point at which we define the specific convex equivalence relations we shall use. Before this, let us mention that in all that follows in this article we shall have $\vec f=\langle f_\alpha\mid\alpha<\lambda\rangle$ a $<_\theta$-chain in $\fsp{\kappa}{\mu}$, which is to say that for all $\alpha<\beta<\lambda$,
	\begin{enumerate}
		\item $|\{ \xi<\kappa\mid f_\alpha(\xi)>f_\beta(\xi)\}|<\theta$;
		\item $|\{ \xi<\kappa\mid f_\alpha(\xi)<f_\beta(\xi)\}|=\kappa$.
	\end{enumerate}
	At some points $\vec f$ will have some extra properties as well, but what follows in this section is based on the above assumption on $\vec f$. In particular, the convex equivalence relations we define will be on $\lambda$, which we remind the reader by our convention is always a regular cardinal.
	\begin{defn}
		Let $g \in {}^\kappa\mu$ and $Y \s \mu$. Then $g^Y:\kappa\rightarrow Y\cup\{\mu\}$ is the function defined
		as follows: $$g^Y(\xi):=\min((Y\cup\{\mu\})\setminus g(\xi)).$$
	\end{defn}
	We shall use the following fact several times without mentioning.
	\begin{observation}
		Let $g, g' \in {}^\kappa\mu$ be functions, $X \s \kappa$ and $Y \s \mu$. Suppose that $g\restriction X \leq g'\restriction X$ everywhere. Then $g^Y\restriction X \leq g^Y\restriction X$ everywhere.
	\end{observation}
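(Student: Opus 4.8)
The statement as printed compares $g^Y\restriction X$ with itself, so I read the intended conclusion as $g^Y\restriction X \leq (g')^Y\restriction X$ everywhere; that is, the operation $g \mapsto g^Y$ is monotone. The plan is to isolate a one-variable monotonicity fact about the ``rounding-up'' map defining $g^Y$ and then apply it pointwise on $X$.

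First I would unpack the definition. Using the convention that the ordinal $g(\xi)$ is the set of its predecessors, the set $(Y\cup\{\mu\})\setminus g(\xi)$ is exactly $\{y\in Y\cup\{\mu\}\mid y\geq g(\xi)\}$, so $g^Y(\xi)$ is the least element of $Y\cup\{\mu\}$ that is $\geq g(\xi)$. I would first record that this minimum is well defined: the set in question is nonempty because $\mu\in Y\cup\{\mu\}$ and $g(\xi)<\mu$, so that $\mu\geq g(\xi)$.

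The core step is the monotonicity of the threshold map $t\mapsto\min\{y\in Y\cup\{\mu\}\mid y\geq t\}$ for ordinals $t\leq\mu$. If $t\leq t'$, then $\{y\in Y\cup\{\mu\}\mid y\geq t'\}\s\{y\in Y\cup\{\mu\}\mid y\geq t\}$, and the minimum of a subset of ordinals is at least the minimum of the ambient set; hence the value at $t'$ is $\geq$ the value at $t$. This is the only inequality doing any work.

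Finally I would apply this pointwise. Fix $\xi\in X$; by hypothesis $g(\xi)\leq g'(\xi)$, so taking $t=g(\xi)$ and $t'=g'(\xi)$ in the previous step yields $g^Y(\xi)\leq (g')^Y(\xi)$. Since $\xi\in X$ was arbitrary, $g^Y\restriction X\leq (g')^Y\restriction X$ everywhere. I do not anticipate any genuine obstacle: the only points that require care are the ordinal-as-set reading of $(Y\cup\{\mu\})\setminus g(\xi)$ and the nonemptiness (guaranteed by $\mu$) that makes the minima legitimate.
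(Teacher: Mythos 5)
Your proof is correct, and it is exactly the argument the paper leaves implicit (the observation is stated without proof, as a fact to be ``used without mentioning''): $g^Y(\xi)$ is the least element of $Y\cup\{\mu\}$ that is $\geq g(\xi)$, and this threshold operation is monotone in $g(\xi)$. You are also right that the printed conclusion contains a typo and should read $g^Y\restriction X\leq (g')^Y\restriction X$ everywhere.
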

	
	\begin{defn} Given $X\s\kappa$ and $Y\s\mu$, let $\equiv_X^Y$ denote the following relation on $\lambda$:
		$$\{ (\alpha,\beta)\mid \alpha,\beta<\lambda\ \&\ |\{\xi\in X\mid f^Y_\alpha(\xi)\neq f^Y_\beta(\xi)\}|<\theta\}.$$
	\end{defn}

	\begin{lemma} \label{convexexample}Let $X\s\kappa$ and $Y\s\mu$. Then $\equiv_X^Y$ is a convex equivalence relation.
	\end{lemma}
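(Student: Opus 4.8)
The plan is to verify the three equivalence-relation axioms and then convexity, the latter being where the hypotheses on $\vec f$ and the monotonicity of the operation $g \mapsto g^Y$ enter. Reflexivity and symmetry are immediate: $\{\xi \in X \mid f^Y_\alpha(\xi) \neq f^Y_\alpha(\xi)\} = \emptyset$, and the defining condition is visibly symmetric in $\alpha$ and $\beta$. For transitivity, given $\alpha \equiv_X^Y \beta$ and $\beta \equiv_X^Y \gamma$, I would use the inclusion
$$\{\xi \in X \mid f^Y_\alpha(\xi) \neq f^Y_\gamma(\xi)\} \s \{\xi \in X \mid f^Y_\alpha(\xi) \neq f^Y_\beta(\xi)\} \cup \{\xi \in X \mid f^Y_\beta(\xi) \neq f^Y_\gamma(\xi)\},$$
which holds because $f^Y_\alpha(\xi) = f^Y_\beta(\xi)$ and $f^Y_\beta(\xi) = f^Y_\gamma(\xi)$ together force $f^Y_\alpha(\xi) = f^Y_\gamma(\xi)$. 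Since $\theta$ is an infinite cardinal, the union of two sets each of size $<\theta$ has size $<\theta$, so transitivity follows.

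The core is convexity. Suppose $\alpha < \gamma < \beta < \lambda$ and $\alpha \equiv_X^Y \beta$; I would show simultaneously that $\alpha \equiv_X^Y \gamma$ and $\gamma \equiv_X^Y \beta$. Since $\vec f$ is a $<_\theta$-chain and $\alpha < \gamma < \beta$, the sets $A := \{\xi < \kappa \mid f_\alpha(\xi) > f_\gamma(\xi)\}$ and $B := \{\xi < \kappa \mid f_\gamma(\xi) > f_\beta(\xi)\}$ each have size $<\theta$. Off $A \cup B$ we have $f_\alpha(\xi) \leq f_\gamma(\xi) \leq f_\beta(\xi)$, so by the monotonicity of $g \mapsto g^Y$ recorded in the Observation we obtain $f^Y_\alpha(\xi) \leq f^Y_\gamma(\xi) \leq f^Y_\beta(\xi)$ for every $\xi \in X \setminus (A \cup B)$.

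Now consider any $\xi \in X$ lying outside $A \cup B$ and also outside the set $C := \{\xi \in X \mid f^Y_\alpha(\xi) \neq f^Y_\beta(\xi)\}$, which has size $<\theta$ because $\alpha \equiv_X^Y \beta$. For such $\xi$ the sandwich $f^Y_\alpha(\xi) \leq f^Y_\gamma(\xi) \leq f^Y_\beta(\xi)$ combines with $f^Y_\alpha(\xi) = f^Y_\beta(\xi)$ to force $f^Y_\alpha(\xi) = f^Y_\gamma(\xi) = f^Y_\beta(\xi)$. Hence both $\{\xi \in X \mid f^Y_\alpha(\xi) \neq f^Y_\gamma(\xi)\}$ and $\{\xi \in X \mid f^Y_\gamma(\xi) \neq f^Y_\beta(\xi)\}$ are contained in $A \cup B \cup C$, a set of size $<\theta$; thus $\alpha \equiv_X^Y \gamma$ and $\gamma \equiv_X^Y \beta$, as required. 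The only point demanding real care — and the sole place where the ordering of $\vec f$ is used — is this sandwiching step, which rests entirely on the monotonicity of $g \mapsto g^Y$ together with the chain property; everything else is bookkeeping with the fact that $\theta$ is infinite, so that the union of the two or three relevant sets of size $<\theta$ again has size $<\theta$. I do not anticipate a genuine obstacle here, only the need to keep the three small exceptional sets straight.
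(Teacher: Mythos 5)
Your proof is correct and follows essentially the same route as the paper: the paper also collects the exceptional sets (your $A\cup B\cup C$) into a single set $x$ of size $<\theta$ and then argues that off $x$ the sandwich $f_\alpha(\xi)\le f_\gamma(\xi)\le f_\beta(\xi)$ together with $f^Y_\alpha(\xi)=f^Y_\beta(\xi)$ forces all three $Y$-values to coincide, exactly as you do via the monotonicity Observation. The only cosmetic differences are that you spell out the equivalence-relation axioms (which the paper dismisses as clear) and that the paper phrases the key step as the interval $(f_\alpha(\xi),f_\gamma(\xi))$ missing $Y$, which is equivalent to your direct sandwiching.
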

	\begin{proof} As $\theta$ is an infinite cardinal, it is clear that $\equiv^Y_X$ is an equivalence relation. Suppose $\alpha<\beta<\gamma<\lambda$ and $\alpha \equiv_X^Y \gamma$. 
		Let $x\in[\kappa]^{<\theta}$ be such that for all $\xi\in\kappa\setminus x$,
		$f_\alpha(\xi)\le f_\beta(\xi)\le f_\gamma(\xi)$. As $\alpha\equiv_X^Y\gamma$,
		we can also assume that $x$ is large enough that $\{\xi\in X\mid f^Y_\alpha(\xi)\neq f^Y_\gamma(\xi)\}\s x$.
		Now, for all $\xi\in X\setminus x$, we get that $f^Y_\alpha(\xi)=f_\gamma^Y(\xi)$,
		so that the interval $(f_\alpha(\xi),f_\gamma(\xi))$ has empty intersection with $Y$,
		and hence $f_\alpha^Y(\xi)=f_\beta^Y(\xi)=f_\gamma^Y(\xi)$.
		
		Thus $\{\xi\in X\mid f^Y_\alpha(\xi)\neq f^Y_\beta(\xi)\}$ 
		and $\{\xi\in X\mid f^Y_\beta(\xi)\neq f^Y_\gamma(\xi)\}$ are both subsets of $x$.
		In particular, $\alpha \equiv_X^Y \beta$ and $\beta\equiv_X^Y \gamma$.
	\end{proof}
	In what follows, we shall pick $\mathcal B$ a collection of subsets of $\kappa$ and $\mathcal C$ a collection of subsets of $\mu$, and then $\{\equiv^C_B \mid B \in \mathcal B, C \in \mathcal C\}$ will be the family of convex equivalence families of interest to us.
	We end this section with a simple observation. 
	\begin{observation}
		Let $\chi, \rho$ be infinite cardinals. Let $X \in [\kappa]^\chi$ and $Y \in [\mu]^\rho$ be such that $\equiv^Y_X$ has $\lambda$-many equivalence classes. Then $\chi^\rho \geq \lambda$. 
	\end{observation}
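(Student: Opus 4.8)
The plan is to observe that $\equiv^Y_X$ is, verbatim, the relation of agreeing modulo the ideal $[X]^{<\theta}$ between the restricted ``rounded'' functions $f^Y_\alpha\restriction X$, and then to bound the number of its classes by the size of the ambient function space. The only real content is that the operation $g\mapsto g^Y$ collapses the codomain from $\mu$ down to a set of size $\rho$, so that however wild the $f_\alpha$ are, the functions $f^Y_\alpha$ take at most $\rho$ values.

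First I would record the reformulation. For each $\alpha<\lambda$ put $g_\alpha:=f^Y_\alpha\restriction X\in {}^{X}(Y\cup\{\mu\})$. Unwinding the definition of $\equiv^Y_X$, we have $\alpha\equiv^Y_X\beta$ precisely when $|\{\xi\in X\mid g_\alpha(\xi)\neq g_\beta(\xi)\}|<\theta$, that is, precisely when $g_\alpha$ and $g_\beta$ agree modulo $[X]^{<\theta}$. Consequently the assignment $\alpha\mapsto g_\alpha$ sends $\equiv^Y_X$-equivalent ordinals into a common $[X]^{<\theta}$-class and $\equiv^Y_X$-inequivalent ordinals into distinct $[X]^{<\theta}$-classes; in particular, if we select one index $\alpha$ from each equivalence class of $\equiv^Y_X$, the resulting functions $g_\alpha$ are pairwise distinct (indeed pairwise distinct even modulo $[X]^{<\theta}$). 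This exhibits an injection from the set of $\equiv^Y_X$-classes into ${}^{X}(Y\cup\{\mu\})$, so the number of classes is at most $|{}^{X}(Y\cup\{\mu\})|$.

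It then remains only to compute this cardinal. Since $\rho$ is infinite we have $|Y\cup\{\mu\}|=|Y|=\rho$, while $|X|=\chi$, so $|{}^{X}(Y\cup\{\mu\})|=\chi^\rho$. As $\equiv^Y_X$ is assumed to have $\lambda$-many equivalence classes, we conclude $\lambda\leq\chi^\rho$, which is exactly the assertion. I expect no genuine obstacle here: the two steps—recognising $\equiv^Y_X$ as the relation of agreement modulo $[X]^{<\theta}$ on the $g_\alpha$, and the cardinal computation $|Y\cup\{\mu\}|=\rho$—are both immediate, and the entire force of the observation is the codomain collapse produced by passing from $f_\alpha$ to $f^Y_\alpha$.
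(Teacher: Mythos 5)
Your reduction is the natural one and is surely what the paper intends (the paper offers no proof of this observation): a selector for $\equiv^Y_X$ yields pairwise distinct elements of ${}^X(Y\cup\{\mu\})$, since inequivalent indices give functions differing on a set of size at least $\theta$, so the number of classes is at most $|{}^X(Y\cup\{\mu\})|$. The problem is the final cardinal computation. The set ${}^X(Y\cup\{\mu\})$ consists of functions \emph{from} $X$ \emph{into} $Y\cup\{\mu\}$, so its cardinality is $|Y\cup\{\mu\}|^{|X|}=\rho^\chi$, not $\chi^\rho$: in a cardinal power the base is the size of the codomain and the exponent the size of the domain. You have the two transposed, and this is not cosmetic --- for instance under GCH with $\chi=\aleph_1$ and $\rho=\aleph_0$ one has $\chi^\rho=\aleph_1^{\aleph_0}=\aleph_1$ while $\rho^\chi=\aleph_0^{\aleph_1}=\aleph_2$. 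What your argument actually establishes is therefore $\lambda\le\rho^\chi$.

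As printed, the observation asserts $\chi^\rho\ge\lambda$, and an injection into a set of size $\rho^\chi$ does not yield that without a further idea; I see no route from your argument to the printed inequality. Since the observation is never invoked later in the paper, the likeliest resolution is that the printed statement itself has base and exponent transposed and should read $\rho^\chi\ge\lambda$, which is exactly what your proof gives once the last step is corrected. Either way, as written your equality $|{}^{X}(Y\cup\{\mu\})|=\chi^\rho$ is false and must be replaced by $|{}^{X}(Y\cup\{\mu\})|=\rho^\chi$, with the conclusion adjusted accordingly; if the statement really is meant with $\chi^\rho$, your proof has a genuine gap and a different argument would be required.
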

	\section{\texorpdfstring{The space $({}^\kappa\mu, <_\theta)$}{The first space}}\label{sectionweakchains}
	
	\begin{definition}\label{defnchang}
		Let $\lambda, \mu, \kappa$ be ordinals.
		\begin{enumerate} 
			\item The partition relation $\lambda \rightarrow [\mu]^2_{\kappa, \mathrm{bdd}}$ asserts that for every $c:[\lambda]^2 \func \kappa$, there is some $I \in [\kappa]^\mu$ and $\epsilon < \kappa$ such that $c``[I]^2 \s \epsilon$. 
			\item Recall that \emph{Chang's Conjecture}, denoted $\cc$, states that $\omega_2\rightarrow[\omega_1]^2_{\omo, \mathrm{bdd}}$.
		\end{enumerate}
	\end{definition}
	We point out that our definition of $\cc$ is a little non-standard and that it is a result of Erd\H{o}s and Hajnal \cite[pg.\ 275]{erdoshajnal} (or see \cite[Proposition 8.2]{kanamori}) that the above formulation is equivalent to the more standard definition.
	Before starting with Theorem~\ref{weakchainone} and Theorem~\ref{weakchaintwo}, we mention the following two results, the first due to Koszmider, the second due to Veli{\v c}kovi{\'c}, which they imply as special cases. 
	\begin{theorem}[\cite{koszmiderfunctions}, \cite{velickovic}] \label{cctheorem}
		\begin{enumerate}
			\item $\cc$ implies that there is no chain in $({}^\omo2, <_{\aleph_0})$ (that is, a chain of sets) of length $\omega_2$.
			\item  $\cc$ implies that there is no chain in $({}^\omo\omo, \ll_{\aleph_0})$ of length $\omega_2$.
			
		\end{enumerate}
	\end{theorem}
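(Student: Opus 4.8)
The plan is to obtain both statements from $\cc$ in its equivalent two-cardinal form $(\atwo,\aone)\twoheadrightarrow(\aone,\azero)$ (the ``more standard'' formulation that Definition~\ref{defnchang} is equivalent to, by the cited Erd\H{o}s--Hajnal result), combined with the convex-equivalence calculus of Section~\ref{sectionprep}. In each part I would fix, towards a contradiction, a chain $\vec f=\seq{f_\alpha\mid\alpha<\omt}$ witnessing the statement and pass to a model $M\prec(H(\omthr),\in,\vec f)$ with $|M|=\aone$, $\vec f\in M$, $|M\cap\omt|=\aone$, and $M\cap\omo$ countable; set $\delta:=\sup(M\cap\omo)<\omo$. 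Everything rests on one boundedness fact from elementarity: if $\alpha<\beta$ both lie in $M\cap\omt$, then $e_{\alpha\beta}:=\{\xi<\omo\mid f_\alpha(\xi)\ge f_\beta(\xi)\}$ is finite by the chain hypothesis, hence, being a finite element of $M$, is a subset of $M\cap\omo$, so $e_{\alpha\beta}\s\delta+1$. Consequently, for every $\xi>\delta$ the map $\alpha\mapsto f_\alpha(\xi)$ is strictly increasing on $M\cap\omt$.

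For part (ii) this already suffices, and I expect the argument to go through cleanly. Choose a strictly increasing sequence $\seq{\alpha_i\mid i<\omo}$ in $M\cap\omt$ and put $\gamma:=\sup_{i<\omo}\alpha_i<\omt$. For each $i$ the set $E_i:=\{\xi<\omo\mid f_{\alpha_i}(\xi)\ge f_\gamma(\xi)\}$ is finite, since $\alpha_i<\gamma$ and $\vec f$ is a $\ll_{\azero}$-chain. Fix $\xi>\delta$: by the boundedness fact $\seq{f_{\alpha_i}(\xi)\mid i<\omo}$ is a strictly increasing $\omo$-sequence of countable ordinals, hence cofinal in $\omo$; as $f_\gamma(\xi)<\omo$ is fixed, there is $i(\xi)<\omo$ with $\xi\in E_i$ for all $i\ge i(\xi)$. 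Now let $i^\ast:=\sup\{i(\xi)\mid\delta<\xi<\delta+\omega\}<\omo$. Then $E_{i^\ast}\supseteq(\delta,\delta+\omega)$ is infinite, contradicting its finiteness. This is the Veli\v{c}kovi\'c direction.

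For part (i) the same setup gives, for $\alpha<\beta$ in $M\cap\omt$, that $S_\alpha\setminus S_\beta\s\delta+1$, so the tails $S_\alpha\cap(\delta,\omo)$ form a genuinely $\s$-increasing chain of length $\aone$ with every jump $S_\beta\setminus S_\alpha$ of size $\aone$. Here the values lie in $\{0,1\}$, so there is no analogue of the ``escape to $\omo$'' used above: comparing against an upper index only yields $S_{\alpha_i}\setminus S_\gamma$ finite, and the diagonalisation of part (ii) collapses. One may instead try the convex-equivalence route, taking $Y=\mu=2$ (so $f^Y=f$) and the relations $\equiv_\eta:=\equiv_\eta^{2}$ for $\eta<\omo$: since Lemma~\ref{pumpup}(ii),(iii) give $\meetm(\omo,\azero)=\cof([\omo]^{\azero})=\aone$, the family $\{\equiv_\eta\mid\eta<\omo\}$ has size $\aone<\omt$ and, by Lemma~\ref{convexlemma}(iii), admits a club $D\s\omt$ with the $0$--$1$ property; as every jump is uncountable, for $\alpha<\beta$ in $D$ some $\equiv_\eta$ separates them, forcing at least $\omt$ equivalence classes. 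The sticking point -- and what I expect to be the genuine difficulty -- is to bound the number of classes of each $\equiv_\eta$ by $<\omt$: the observation following Lemma~\ref{convexexample} requires $Y$ infinite and so fails when $\mu=2$, while the crude bound $2^{\azero}$ is worthless under $\neg\ch$ (and indeed $\cc$ alone does not bound the number of mod-finite traces). Thus $\cc$ must be exploited on the chain itself rather than on the traces, using the uncountable differences $|S_\beta\setminus S_\alpha|=\aone$ through a finer reflection argument; carrying this out is the core of Koszmider's theorem and, I expect, the main obstacle.
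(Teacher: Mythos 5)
Your part (ii) is correct, but it is not the paper's route. The paper does not prove Theorem~\ref{cctheorem} from scratch at all: it derives clause (i) as the special case $(\theta,\kappa,\mu,\lambda)=(\azero,\aone,2,\atwo)$ of Theorem~\ref{weakchaintwo} and clause (ii) as the case $(\theta,\kappa,\mu,\lambda)=(\azero,\aone,\aone,\atwo)$ of Theorem~\ref{weakchainone}, where hypothesis (iv)(b), namely $\lambda\rightarrow[\theta^+]^2_{\kappa,\mathrm{bdd}}$, is literally $\cc$ as formulated in Definition~\ref{defnchang} (together with $\meetm(\aone,\azero)=\cof([\aone]^{\azero})=\aone<\atwo$ from Lemma~\ref{pumpup}). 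Your elementary-submodel argument for (ii) --- the two-cardinal form of $\cc$, the model $M$, the escape of $\seq{f_{\alpha_i}(\xi)\mid i<\omo}$ past $f_\gamma(\xi)$, and the diagonalisation over $(\delta,\delta+\omega)$ --- is a complete and correct proof of that clause; it is shorter and self-contained, but, as you yourself observe, it exploits that the values range cofinally in $\omo$ and therefore cannot see the case $\mu=2$, whereas the paper's uniform scheme covers both clauses at once.

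Part (i) is a genuine gap: you explicitly do not prove it. Moreover, the obstacle you diagnose --- bounding the number of classes of $\equiv^{\{\zeta\}}_B$ --- is not where the difficulty lies, because the paper's argument never counts equivalence classes (the Observation after Lemma~\ref{convexexample} is not used in Section~\ref{sectionweakchains}). The three ingredients you are missing are visible in the proofs of Theorems~\ref{weakchainone} and~\ref{weakchaintwo}. First, $\cc$ is used in its partition form, applied to the colouring $h(\alpha,\beta):=$ the least $\varepsilon$ bounding the finite set $\{\xi<\omo\mid f_{i_\alpha}(\xi)>f_{i_\beta}(\xi)\}$, to produce $I\in[\omt]^{\aone}$ and a \emph{single} $\epsilon<\omo$ with $h``[I]^2\s\epsilon$; this replaces your submodel and is what makes all functions indexed by $I$ genuinely non-decreasing above $\epsilon$. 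Second, since the jump $S_{i_1}\setminus S_{i_0}$ is uncountable, some countable $B\s\omo\setminus\epsilon$ from a $\meetm(\omo,\azero)$-witness meets $\{\xi\mid f_{i_0}(\xi)=0<1=f_{i_1}(\xi)\}$ infinitely; with $C=\{0\}$ this gives $i_0\not\equiv^C_B i_1$, and the $0$--$1$ property of the club $D$ upgrades one inequivalent pair to \emph{all} pairs of $D$ being inequivalent --- no bound on the number of classes is needed, and indeed none is available. Third, for consecutive $i<i^+$ in $I$ one extracts $\tau_i\in B$ and $\zeta_i\in C$ with $f^C_{i}(\tau_i)=\zeta_i<f^C_{i^+}(\tau_i)$, and the contradiction is a pigeonhole on the countable set $B\times C$ against the $\aone$-many consecutive pairs of $I$, not a cardinality computation on $\omt/{\equiv^C_B}$. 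So the ``finer reflection argument'' you anticipate is this bounded-colouring-plus-pigeonhole argument, carried out in full generality in Section~\ref{sectionweakchains}.
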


	\begin{theorem}\label{weakchainone}
		Let $\theta,\kappa, \mu, \lambda$ be infinite cardinals with $\lambda$ being regular. Suppose also that 
		\begin{enumerate}
			\item $\theta \leq \kappa, \mu$ and $\theta \leq \cf(\kappa)$;
			\item $\meet^-(\kappa, \theta) < \lambda$;
			\item $\cof([\mu]^\theta) < \lambda$;
			\item one of the following occurs:
			\begin{enumerate}
				\item $\theta^+ <\cf(\kappa)$, or
				\item $\lambda \rightarrow [\theta^+]^2_{\kappa, \mathrm{bdd}}$.
			\end{enumerate}
		\end{enumerate}
		Then there are no chains in $({}^\kappa\mu, <_\theta)$ of length $\lambda$.
	\end{theorem}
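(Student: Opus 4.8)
The plan is to argue by contradiction, assuming $\vec f=\langle f_\alpha\mid\alpha<\lambda\rangle$ is a $<_\theta$-chain in ${}^\kappa\mu$. First I would fix a family $\mathcal B\s[\kappa]^{\leq\theta}$ witnessing $\meet^-(\kappa,\theta)$ and a family $\mathcal C\s[\mu]^\theta$ witnessing $\cof([\mu]^\theta)$; by hypotheses (ii) and (iii) both have size $<\lambda$, and by Lemma~\ref{pumpup}(iii) we have $\kappa\leq\meet^-(\kappa,\theta)<\lambda$, so $\kappa<\lambda$ and $\lambda$ is regular and uncountable. The key device is to enrich the family of equivalence relations with \emph{tails}: set $\mathcal E:=\{\equiv_{B\setminus\epsilon}^{C}\mid B\in\mathcal B,\ C\in\mathcal C,\ \epsilon<\kappa\}$. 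Each member has the form $\equiv_X^Y$, hence is a convex equivalence relation by Lemma~\ref{convexexample}, and $|\mathcal E|\leq|\mathcal B|\cdot|\mathcal C|\cdot\kappa<\lambda$. By Lemma~\ref{convexlemma}(iii) I obtain a club $D\s\lambda$ with the $0$-$1$ property for $\mathcal E$.

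Next I would pass to a tail on which a small piece of the chain becomes \emph{genuinely} (everywhere) increasing. For $\alpha<\beta$ write $E_{\alpha\beta}:=\{\xi<\kappa\mid f_\alpha(\xi)>f_\beta(\xi)\}$; by the chain property $|E_{\alpha\beta}|<\theta$, and since $\theta\leq\cf(\kappa)$ by (i) the set $E_{\alpha\beta}$ is bounded in $\kappa$. Under (iv)(a), where $\theta^+<\cf(\kappa)$, I pick any $I\s D$ with $|I|=\theta^+$ and set $\epsilon_0:=\sup\{\sup(E_{\alpha\beta})+1\mid\{\alpha,\beta\}\in[I]^2\}$, which is $<\kappa$ as a supremum of fewer than $\cf(\kappa)$ ordinals below $\kappa$. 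Under (iv)(b) I instead colour $[\lambda]^2$ by pulling back $\{\alpha,\beta\}\mapsto\sup(E_{\alpha\beta})<\kappa$ along the increasing enumeration $\pi:\lambda\leftrightarrow D$ and apply $\lambda\rightarrow[\theta^+]^2_{\kappa,\mathrm{bdd}}$ to obtain $I\s D$ with $|I|=\theta^+$ and some $\epsilon_0<\kappa$. In either case, for all $\alpha<\beta$ in $I$ we have $E_{\alpha\beta}\s\epsilon_0$, i.e.\ $f_\alpha(\xi)\leq f_\beta(\xi)$ for every $\xi\in[\epsilon_0,\kappa)$.

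The crucial step is to produce, on this tail, a \emph{single} relation from $\mathcal E$ that separates all of $I$. Fix one pair $\alpha_0<\beta_0$ in $I$. The set $A:=\{\xi\in[\epsilon_0,\kappa)\mid f_{\alpha_0}(\xi)<f_{\beta_0}(\xi)\}$ has size $\kappa$, so by the choice of $\mathcal B$ there is $B\in\mathcal B$ with $|(B\setminus\epsilon_0)\cap A|\geq\theta$; choosing $S\in[(B\setminus\epsilon_0)\cap A]^\theta$ and $C\in\mathcal C$ with $\{f_{\alpha_0}(\xi)\mid\xi\in S\}\s C$, the definition of $g^C$ gives $f_{\alpha_0}^C(\xi)=f_{\alpha_0}(\xi)<f_{\beta_0}(\xi)\leq f_{\beta_0}^C(\xi)$ for each $\xi\in S$, so $\alpha_0\not\equiv_{B\setminus\epsilon_0}^{C}\beta_0$. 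Since $\equiv_{B\setminus\epsilon_0}^{C}\in\mathcal E$ and $\{\alpha_0,\beta_0\}\in[D]^2$, the $0$-$1$ property forces $[D]^2\cap{\equiv_{B\setminus\epsilon_0}^{C}}=\emptyset$, so every pair in $I$ is separated by this one relation. Writing $B^*:=B\setminus\epsilon_0$ and $g_\alpha:=f_\alpha^{C}\restriction B^*$, the sequence $\langle g_\alpha\mid\alpha\in I\rangle$ consists of functions from $B^*$ into $C\cup\{\mu\}$, two sets of size $\theta$, that are pairwise \emph{far} (any two differ on $\geq\theta$ coordinates, which already forces $|B^*|=\theta$) and, because $B^*\s[\epsilon_0,\kappa)$ and by the observation that $g\restriction X\le g'\restriction X$ everywhere implies $g^Y\restriction X\le g'^Y\restriction X$ everywhere, \emph{genuinely increasing}: $g_\alpha(\xi)\leq g_\beta(\xi)$ for all $\xi\in B^*$ whenever $\alpha<\beta$.

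Finally I would derive the numerical contradiction. For consecutive $\alpha<\alpha'$ in $I$ the jump set $T:=\{\xi\in B^*\mid g_\alpha(\xi)<g_{\alpha'}(\xi)\}$ has size $\theta$; summing over the $\theta^+$ consecutive steps gives the total $\sum|T|=\theta^+$. On the other hand, for each fixed $\xi\in B^*$ the sequence $\langle g_\alpha(\xi)\mid\alpha\in I\rangle$ is non-decreasing with values in the $\theta$-sized set $C\cup\{\mu\}$, so it increases strictly at most $\theta$ times; summing over $\xi\in B^*$ bounds the same total by $|B^*|\cdot\theta=\theta$. Thus $\theta^+\leq\theta$, a contradiction. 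The main obstacle, and the reason for the tail-enrichment of $\mathcal E$, is precisely to force monotonicity (which only holds on a tail $[\epsilon_0,\kappa)$ produced by (iv)) and separation (which a priori could be witnessed on the bounded part $B\cap\epsilon_0$) to occur on one and the same set of coordinates of size $\theta$; the hypothesis $\theta\leq\cf(\kappa)$ is what guarantees the bad sets $E_{\alpha\beta}$ are bounded, so that such an $\epsilon_0$ exists and the counting can close the argument.
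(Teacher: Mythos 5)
Your proposal is correct and follows essentially the same route as the paper's proof: the convex equivalence relations $\equiv^C_B$ with $B$ ranging over tails of members of a $\meet^-(\kappa,\theta)$-witness and $C$ over a $\cof([\mu]^\theta)$-witness, a club with the $0$-$1$ property, hypothesis (iv) to bound the disagreement sets of a $\theta^+$-sized subfamily below a single $\epsilon_0<\kappa$, and a separating pair $(B,C)$ with $B\s\kappa\setminus\epsilon_0$. Your closing double count (each of $\theta^+$ consecutive pairs jumps somewhere on $B^*$, while each $\xi\in B^*$ admits at most $\theta$ jumps into $C\cup\{\mu\}$) is just a repackaging of the paper's pigeonhole on $B\times C$.
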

	\begin{proof}
		Note that the hypotheses imply that $\mu, \kappa < \lambda$. Suppose towards a contradiction that $\langle f_\alpha \mid \alpha< \lambda\rangle$ is such a chain. Let $\mathcal B$ be a family witnessing $\meet^-(\kappa, \theta) < \lambda$, and since $\kappa \leq \meet^-(\kappa, \theta)$ we can assume that $\mathcal B$ is closed under taking end segments. Let $\mathcal C$ be a family witnessing that $\cof([\mu]^\theta) < \lambda$. As the family $\{\equiv^C_B \mid B \in \mathcal B, C \in \mathcal C\}$ of convex equivalence relations has size less than $\lambda$, we can find a club $D \s \lambda$ which has the 0-1 property with respect to it by Lemma~\ref{convexlemma}. Let $\langle i_\alpha \mid \alpha< \lambda\rangle$ be the increasing enumeration of $D$. 
		\begin{claim}\label{01claim}
			For any $\epsilon< \kappa$, there are $B \in \mathcal B$ and $C \in \mathcal C$ such that $B \s \kappa \setminus \epsilon$ and all the elements of $D$ are in distinct classes of $\equiv^C_B$.
		\end{claim}
		\begin{why}
			As $f_{i_0} <_\theta f_{i_1}$ the set $S:= \{\xi \in \kappa \setminus \epsilon \mid f_{i_0}(\xi) < f_{i_1}(\xi)\}$ has size $\kappa$, so there is some $B \in \mathcal B$ such that $|B \cap S| \geq \theta$. Since $\mathcal B$ is closed under taking end segments we can also assume that $ B \s \kappa \setminus \epsilon$. Let $C \in \mathcal C$ be such that $f_{i_0}``[B] \s C$. Then $i_0 \not\equiv^C_B i_1$, so by the 0-1 property of $D$ we can finish.
		\end{why}

		As $\theta \leq \cf(\kappa)$ we are justified in defining the following function $h:[\lambda]^2\rightarrow \kappa$ via $h(\alpha, \beta) := \epsilon$ where $\epsilon<\kappa$ is the least $\varepsilon< \kappa$ such that 
		$$\{\xi< \kappa \mid f_{i_\alpha}(\xi) > f_{i_\beta}(\xi)\} \s \varepsilon.$$
		We now have to make a case distinction to finish the proof.
		
		\textbf{\underline{Case 1: $\cf(\kappa)>\theta^{+}$.}} Let $\epsilon:=\sup\{ h(\alpha,\beta)\mid \alpha<\beta<\theta^+\}$ which we note is bounded below $\kappa$. By Claim~\ref{01claim}, we can find $B \in \mathcal B$ and $C \in \mathcal C$ such that $B \s \kappa \setminus \epsilon$ and such that all the elements of $D$ are in distinct classes of $\equiv^C_B$. This implies that for every $\alpha< \beta < \theta^+$, $f_{i_\alpha}^C\restriction B \leq f_{i_{\beta}}^C\restriction B$ everywhere but also, $i_\alpha \not \equiv^C_B i_\beta$.
		
		So for each $\alpha < \theta^+$ we can find $\tau_\alpha \in B$ and $\zeta_\alpha \in C$ such that $$f_{i_\alpha}^C(\tau_\alpha) = \zeta_\alpha < f_{i_{\alpha+1}}^C(\tau_\alpha).$$ 
		As $|B \times C| < \theta^+$, for some $(\tau^*, \zeta^*)\in B \times C$ we can find $\alpha< \beta < \theta^+$ such that $\tau_\alpha = \tau^* = \tau_\beta$ and $\zeta_\alpha = \zeta^*= \zeta_\beta$. But this implies that 
		$$f_{i_\alpha}^C(\tau^*) = \zeta^* < f_{i_{\alpha+1}}^C(\tau^*) \leq f_{i_\beta}(\tau^*) = \zeta^*$$
		which is not possible. 
		
		\textbf{\underline{Case 2: $\lambda \rightarrow [\theta^+]^2_{\kappa, \mathrm{bdd}}$.}} The proof is similar. Suppose towards a contradiction that $\lambda \rightarrow [\theta^+]^2_{\kappa, \mathrm{bdd}}$. Applying it to the function $h:[\lambda]^2 \func \kappa$, we get an $\epsilon< \kappa$ and an $I \in [\lambda]^{\theta^+}$ such that $h``[I]^2 \s \epsilon$. Appealing to Claim~\ref{01claim} we again find $B$ and $C$ as above with $B \s \kappa \setminus \epsilon$.
		
		The rest of the proof is similar, except that we must consider consecutive elements of $I$ in the increasing enumeration instead of consecutive elements of $\theta^+$ as in the previous case. That is, for each $i \in I$, let $i^+:= \min(I \setminus (i+1))$. Then we find $(\tau_i, \zeta_i) \in B \times C$ such that
		$$f_{i}^C(\tau_i) = \zeta_i < f_{i^+}^C(\tau_i).$$
		We reach a contradiction in the same way.
	\end{proof}
	We point out that in the above, we required the hypothesis $\cof([\mu]^ \theta) < \lambda$ whereas in the forthcoming Theorem~\ref{strongchains} we will be able to make do with weaker hypotheses such as $\meet(\mu, \theta)$ so that we suffice ourselves not with covering a set but merely with having large intersection with it. Comparing clauses (vi) and (xii) of Lemma~\ref{pumpup} makes clear the benefit of such a change. The reason we could not be as parsimonious here was that in the proof above we were not able to obtain large subsets of the image as in the forthcoming Claim~\ref{strongchainsclaim}. 
	Indeed, a chain of sets, that is, a chain in $({}^\kappa 2, <_\theta)$ is also a chain in $({}^\kappa \mu, <_\theta)$ for any $\mu \geq 2$. In contrast to this, the forthcoming Proposition~\ref{strongchainsprop} shows that if $\theta, \mu < \kappa$, then there are no chains in $({}^\kappa\mu,\ll_\theta)$ of length $\mu+1$. Finally, if one is interested not in $({}^\kappa \mu, <_\theta)$ but the subspace of it consisting of increasing or even just non-decreasing functions, then one can reduce the hypotheses from those about covering to meeting. However, as these results are not well-motivated we do not include them here.
	
	The difference in the previous theorem and the next theorem are the requirements on $\mu$. In the former, we required that $\theta \leq \mu$ and $\cof([\mu]^\theta) < \lambda$ (by Definition~\ref{defncof} the first of these is required in order for $\cof([\mu]^\theta)$ to be defined). In what follows we replace these hypotheses by $\mu < \cf(\kappa)$.
	\begin{theorem}\label{weakchaintwo}
		Let $\theta,\kappa, \mu, \lambda$ be infinite cardinals with $\lambda$ being regular. Suppose also that 
		\begin{enumerate}
			\item $\theta \leq \cf(\kappa)$;
			\item $\meet^-(\kappa, \theta) < \lambda$;
			\item $\mu < \cf(\kappa)$;
			\item one of the following occurs:
			\begin{enumerate}
				\item $\theta^+ <\cf(\kappa)$, or
				\item $\lambda \rightarrow [\theta^+]^2_{\kappa, \mathrm{bdd}}$.
			\end{enumerate}
		\end{enumerate}
		Then there are no chains in $({}^\kappa\mu, <_\theta)$ of length $\lambda$.
	\end{theorem}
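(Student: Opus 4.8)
The plan is to run the proof of Theorem~\ref{weakchainone} almost verbatim, the observation being that there the hypothesis $\cof([\mu]^\theta)<\lambda$ was used \emph{only} to coarsen the functions into a range of size $\theta$ via a fixed covering family $\mathcal C$, and this is exactly what $\mu<\cf(\kappa)$ lets us dispense with. Concretely, I would let $\mathcal B$ witness $\meet^-(\kappa,\theta)<\lambda$, closed under end segments, and simply take $\mathcal C:=\{\mu\}$, so that $f^\mu=f$ for every $f$. The family of convex equivalence relations $\{\equiv^\mu_B\mid B\in\mathcal B\}$ then has size $\meet^-(\kappa,\theta)<\lambda$, so by Lemma~\ref{convexlemma} there is a club $D\s\lambda$ with the $0$--$1$ property with respect to it; write $\langle i_\alpha\mid\alpha<\lambda\rangle$ for its increasing enumeration. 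The analogue of Claim~\ref{01claim} holds with $C=\mu$: given $\epsilon<\kappa$, the set $S:=\{\xi\in\kappa\setminus\epsilon\mid f_{i_0}(\xi)<f_{i_1}(\xi)\}$ has size $\kappa$, one picks $B\in\mathcal B$ with $B\s\kappa\setminus\epsilon$ and $|B\cap S|\geq\theta$, and then $f_{i_0}=f^\mu_{i_0}<f^\mu_{i_1}=f_{i_1}$ on $B\cap S$ witnesses $i_0\not\equiv^\mu_B i_1$, so the $0$--$1$ property forces all of $D$ into distinct classes.

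As before, since $\theta\leq\cf(\kappa)$ the set $\{\xi<\kappa\mid f_{i_\alpha}(\xi)>f_{i_\beta}(\xi)\}$ has size $<\theta$ and is therefore bounded, so $h:[\lambda]^2\func\kappa$ sending $\{\alpha,\beta\}$ to the least bound is well defined. The endgame is where the two proofs diverge. Using clause (iv) I would extract a block of indices $J$ and an $\epsilon<\kappa$ with $h``[J]^2\s\epsilon$: in Case~(a) by setting $\epsilon:=\sup\{h(\alpha,\beta)\mid\{\alpha,\beta\}\in[J]^2\}$, which is $<\kappa$ provided $|J|<\cf(\kappa)$; in Case~(b) by applying $\lambda\func[\theta^+]^2_{\kappa,\mathrm{bdd}}$ to $h$. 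Appealing to the claim with this $\epsilon$ produces $B\s\kappa\setminus\epsilon$ on which all of $D$ lies in distinct $\equiv^\mu_B$-classes, so that $\langle f_{i_\alpha}\restriction B\mid\alpha\in J\rangle$ is $\leq$-increasing everywhere on $B$ while consecutive members differ, hence strictly increase, on $\geq\theta$ points of $B$. Since $|B|\leq\theta$, pigeonholing the witnessing coordinate over the $|J|$-many consecutive steps yields a single $\tau^*\in B$ and a set $A$ of steps with $|A|=|J|$ at which $\langle f_{i_\alpha}(\tau^*)\mid\alpha\in A\rangle$ is strictly increasing (the increase at a step combines with monotonicity at $\tau^*$, valid as $\tau^*\geq\epsilon\geq h$ on all pairs from $J$). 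But this is a strictly increasing sequence of ordinals below $\mu$, so $|J|\leq\mu$.

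Thus it suffices to produce such a block $J$ with $|J|>\mu$ and $\cf(|J|)>\theta$ (so that the coordinate-pigeonhole succeeds), i.e. of size $N:=\max(\theta^+,\mu^+)$. When $\mu\leq\theta$ we have $N=\theta^+$, and the block is obtained exactly as in Theorem~\ref{weakchainone}: Case~(a) bounds $h$ over $[\theta^+]^2$ using $\theta^+<\cf(\kappa)$, and Case~(b) uses the partition relation directly. \emph{The main obstacle is the complementary range $\theta<\mu$}: here $\theta^+\leq\mu<\cf(\kappa)$ forces us into Case~(a), and we must bound $h$ over a block of size $N=\mu^+$. This is precisely where hypothesis (iii) enters, since $\mu<\cf(\kappa)$ gives $\mu^+\leq\cf(\kappa)$, so that $\sup h``[J]^2<\kappa$ is available for $|J|=\mu^+$ whenever $\mu^+<\cf(\kappa)$. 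The genuinely delicate point, demanding the most care, is the borderline $\mu^+=\cf(\kappa)$, where the crude bound over a block of full size $\cf(\kappa)$ is unavailable and a more careful selection of the index block $J$ (rather than an arbitrary $\mu^+$-subset) is required.
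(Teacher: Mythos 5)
Your reduction is on the right track in spirit---the only things that need to change from Theorem~\ref{weakchainone} are the choice of $\mathcal C$ and the proof of the analogue of Claim~\ref{01claim}---but the specific choice $\mathcal C=\{\mu\}$ leaves a genuine gap, one that you yourself flag without closing. With no coarsening of the range, your endgame has to track actual values of the $f_{i_\alpha}$'s, so whenever $\theta<\mu$ you need a block $J$ with $h``[J]^2$ bounded and $|J|=\mu^+$. In the borderline case $\mu^+=\cf(\kappa)$ (for instance $(\theta,\mu,\kappa,\lambda)=(\aleph_0,\aleph_1,\aleph_2,\aleph_3)$, which is squarely within the scope of the theorem), the crude supremum bound is unavailable, and what you would need is essentially $\lambda\rightarrow[\cf(\kappa)]^2_{\kappa,\mathrm{bdd}}$, a Chang's-Conjecture-type partition statement that is not a theorem of $\zfc$; no ``more careful selection of $J$'' can be expected to produce it. So the proposal does not prove the theorem in this case.

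The paper's fix is to coarsen all the way down to singletons: take $\mathcal C:=[\mu]^1$. The hypothesis $\mu<\cf(\kappa)$ is then used inside the analogue of Claim~\ref{01claim}, not in the endgame: since $S=\{\xi\in\kappa\setminus\epsilon\mid f_{i_0}(\xi)<f_{i_1}(\xi)\}$ has size $\kappa$ and $\mu<\cf(\kappa)$, there is a single $\zeta<\mu$ for which $S^\zeta:=\{\xi\in S\mid f_{i_0}(\xi)=\zeta\}$ has size $\kappa$; one picks $B\in\mathcal B$ with $B\s\kappa\setminus\epsilon$ meeting $S^\zeta$ in a set of size at least $\theta$ and sets $C:=\{\zeta\}$, which witnesses $i_0\not\equiv^C_B i_1$. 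With $|C|=1$ the coarsened functions $f^C$ take at most the two values $\zeta$ and $\mu$, the pigeonhole set $B\times C$ has size at most $\theta$, and the rest of the proof of Theorem~\ref{weakchainone}---including both alternatives of hypothesis (iv), each applied to a block of size only $\theta^+$---goes through verbatim. If you replace $\mathcal C=\{\mu\}$ by $[\mu]^1$ and move the use of $\mu<\cf(\kappa)$ into the claim, your argument becomes the paper's.
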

	\begin{proof}
		The proof is almost exactly the same as that of Theorem~\ref{weakchainone} except the definition of $\mathcal C$ and also Claim~\ref{01claim} which needs to be slightly modified. As before, let $\mathcal B$ be a family witnessing $\meet^-(\kappa, \theta) < \lambda$ which is closed under taking end segments. Let $\mathcal C: =[\mu]^1$. The following is the analogue of Claim~\ref{01claim}.
		\begin{claim}
			For any $\epsilon< \kappa$, there are $B \in \mathcal B$ and $C \in \mathcal C$ such that $B \s \kappa \setminus \epsilon$ and all the elements of $D$ are in distinct classes of $\equiv^C_B$.
		\end{claim}
		\begin{why}
			As $f_{i_0} <_\theta f_{i_1}$ the set $S:= \{\xi \in \kappa \setminus \epsilon \mid f_{i_0}(\xi) < f_{i_1}(\xi)\}$ has size $\kappa$. As $\mu < \cf(\kappa)$, there is a $\zeta < \mu$ such that $S^\zeta:=\{\xi \in S \mid f_{i_0}(\xi) = \zeta\}$ has size $\kappa$. So there is some $B \in \mathcal B$ such that $|B \cap S^\zeta| \geq \theta$ and $ B \s \kappa \setminus \epsilon$. Let $C:= \{\zeta\}$. So then $B$ and $C$ are as required.
		\end{why}
		The rest of the proof is exactly the same.
	\end{proof}
	The next two corollaries together prove Theorem~A.
	\begin{corollary}
		There are no $\theta^{+++}$ chains in $(\fsp{\theta^{++}}{\theta^{++}}, <_\theta)$ for every infinite cardinal $\theta$.
	\end{corollary}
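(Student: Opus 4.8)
The plan is to deduce this corollary from Theorem~\ref{weakchainone} by verifying its four hypotheses with the substitution $\kappa = \mu = \theta^{++}$ and $\lambda = \theta^{+++}$. Since $\lambda = \theta^{+++}$ is a successor cardinal, it is regular, so the standing regularity requirement on $\lambda$ is met. The content of the argument is therefore entirely bookkeeping against Lemma~\ref{pumpup} and Definition~\ref{defnchang}, together with a suitable choice in clause~(iv).

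First I would check clauses (i)--(iii). For clause~(i), we have $\theta \leq \kappa = \mu = \theta^{++}$ trivially, and $\cf(\theta^{++}) = \theta^{++} \geq \theta$ since $\theta^{++}$ is regular, so $\theta \leq \cf(\kappa)$. For clause~(ii), I would invoke Lemma~\ref{pumpup}(viii): there are no cardinal fixed points in the interval $(\theta, \theta^{++}]$ (every cardinal fixed point $\aleph_\xi = \xi$ is a limit cardinal, whereas $\theta^+, \theta^{++}$ are successors), so $\meet^-(\theta^{++}, \theta) = \theta^{++} < \theta^{+++} = \lambda$. For clause~(iii), I would use Lemma~\ref{pumpup}(xii): since $\theta^{++} < \theta^{+\omega}$, we get $\cof([\theta^{++}]^\theta) = \theta^{++} < \lambda$.

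The one point requiring a real choice is clause~(iv). Here I would verify the first alternative, $\theta^+ < \cf(\kappa)$, whenever it holds, and fall back on the second alternative otherwise. If $\theta$ is regular, then $\cf(\kappa) = \theta^{++} > \theta^+$, so clause~(iv)(a) holds directly. If $\theta$ is singular, then still $\theta^+ < \theta^{++} = \cf(\kappa)$ as a strict inequality of cardinals, so clause~(iv)(a) holds in that case too; indeed $\theta^+ < \theta^{++}$ unconditionally, so clause~(iv)(a) always holds and the partition-relation alternative is never needed here. With all four hypotheses verified, Theorem~\ref{weakchainone} yields that there are no chains in $(\fsp{\theta^{++}}{\theta^{++}}, <_\theta)$ of length $\theta^{+++}$, which is exactly the claim.

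The main obstacle, such as it is, lies in clause~(ii): one must be sure that $\meet^-(\theta^{++}, \theta)$ really equals $\theta^{++}$ rather than jumping to $\theta^{+++}$, and this is precisely where the absence of cardinal fixed points below $\theta^{++}$ (Lemma~\ref{pumpup}(viii)) is doing the essential work. Everything else is a direct appeal to the cofinality computations in Lemma~\ref{pumpup}, and since clause~(iv)(a) holds outright, no appeal to Chang-type partition relations is required for this particular corollary.
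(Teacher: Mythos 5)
Your proposal is correct and follows essentially the same route as the paper: both apply Theorem~\ref{weakchainone} with $(\mu,\kappa,\lambda)=(\theta^{++},\theta^{++},\theta^{+++})$, use Lemma~\ref{pumpup} to get $\meet^-(\theta^{++},\theta)=\cof([\theta^{++}]^\theta)=\theta^{++}<\theta^{+++}$, and rely on $\cf(\theta^{++})=\theta^{++}>\theta^+$ to settle clause~(iv) (the paper merely phrases this last step contrapositively, deriving the absurdity $\theta^{++}\leq\theta^+$ from the assumed existence of a chain). Your explicit citations of Lemma~\ref{pumpup}(viii) and (xii) are valid ways to carry out the computation the paper leaves unspecified.
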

	\begin{proof}
		Suppose that there is such a chain. Then by Lemma~\ref{pumpup},  for $(\mu, \kappa, \lambda): = (\theta^{++},\theta^{++},\theta^{+++})$, $$\theta^{+++} = \lambda >\meet^-(\kappa, \theta) = \meet^-(\theta^{++}, \theta) = \theta^{++}$$ and $$\theta^{+++} = \lambda >\cof([\mu]^\theta) = \cof([\theta^{++}]^ \theta) = \theta^{++}.$$ So by Theorem~\ref{weakchainone} we conclude that $\cf(\kappa) = \cf(\theta^{++})=\theta^{++} \leq \theta^+$ which is absurd.
	\end{proof}
	\begin{corollary} \label{setsresult} For every infinite cardinal $\theta$, there is no sequence $\langle S_\alpha\mid\alpha<\theta^{+++}\rangle$ of sets in $[\theta^{++}]^{\theta^{++}}$ 
		such that, for all $\alpha<\beta<\theta^{+++}$:
		\begin{enumerate}
			\item $|S_\alpha\setminus S_\beta|<\theta$;
			\item $|S_\beta\setminus S_\alpha|=\theta^{++}$. 
		\end{enumerate}
	\end{corollary}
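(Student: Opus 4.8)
The plan is to reduce this statement directly to the function-space result of the preceding corollary by passing from sets to their characteristic functions. Given a putative sequence $\langle S_\alpha \mid \alpha < \theta^{+++}\rangle$ in $[\theta^{++}]^{\theta^{++}}$ satisfying (i) and (ii), for each $\alpha$ I would let $f_\alpha \in \fsp{\theta^{++}}{2}$ be the characteristic function of $S_\alpha$, so that $f_\alpha(\xi) = 1$ when $\xi \in S_\alpha$ and $f_\alpha(\xi) = 0$ otherwise.

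The key step is to observe that the two conditions on the sets translate verbatim into the relation $<_\theta$ between the corresponding characteristic functions. Fixing $\alpha < \beta < \theta^{+++}$, we have $f_\alpha(\xi) > f_\beta(\xi)$ precisely when $\xi \in S_\alpha \setminus S_\beta$, and $f_\alpha(\xi) < f_\beta(\xi)$ precisely when $\xi \in S_\beta \setminus S_\alpha$. Hence condition (i) says that $|\{\xi < \theta^{++} \mid f_\alpha(\xi) > f_\beta(\xi)\}| < \theta$, while condition (ii) says that $|\{\xi < \theta^{++} \mid f_\alpha(\xi) < f_\beta(\xi)\}| = \theta^{++}$, and together these are exactly the assertion $f_\alpha <_\theta f_\beta$.

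Since $2 \leq \theta^{++}$, every element of $\fsp{\theta^{++}}{2}$ is also an element of $\fsp{\theta^{++}}{\theta^{++}}$, and the relation $<_\theta$ is computed identically in either space. Therefore $\langle f_\alpha \mid \alpha < \theta^{+++}\rangle$ is a chain of length $\theta^{+++}$ in $(\fsp{\theta^{++}}{\theta^{++}}, <_\theta)$, contradicting the preceding corollary and completing the proof. I do not expect any genuine obstacle at this stage: all of the substantive work has already gone into the function-space statement, and this step is merely the remark, already flagged in the surrounding text, that a chain of sets is the special case $\mu = 2$ of a chain of functions into $\mu$.
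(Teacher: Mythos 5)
Your proposal is correct and is exactly the paper's own argument: pass to characteristic functions, note that conditions (i) and (ii) say precisely that $f_\alpha <_\theta f_\beta$, view the resulting chain in $(\fsp{\theta^{++}}{2}, <_\theta)$ as a chain in $(\fsp{\theta^{++}}{\theta^{++}}, <_\theta)$, and invoke the preceding corollary. The paper additionally notes an alternate route via Theorem~\ref{weakchaintwo} with $(\mu,\kappa,\lambda)=(2,\theta^{++},\theta^{+++})$, but your reduction is the primary one given there.
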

	\begin{proof}
		If there were such a sequence, then we would get a $\theta^{+++}$ length chain in $({}^{\theta^{++}}2, <_\theta)$ and hence in $(\fsp{\theta^{++}}{\theta^{++}}, <_\theta)$ as well, which we now know to be impossible. Alternately, we could use Theorem~\ref{weakchaintwo} for the parameters $(\mu, \kappa, \lambda): = (2,\theta^{++},\theta^{+++})$ to get a contradiction. 
	\end{proof}
	
	\section{\texorpdfstring{The space $({}^\kappa\mu, \ll_\theta)$}{The second space}}\label{sectionstrongchains}
	The following proposition shows that when considering chains in $({}^\kappa\mu, \ll_\theta)$, the requirement that $\mu \geq \kappa$ is quite natural. This is in contrast to the space $({}^\kappa\mu, <_\theta)$ for which one of the most important cases is when $\mu = 2$, that is we are considering chains of sets. Indeed, one may compare it with Theorem~\ref{koszmidersettheorem} of Koszmider.
	\begin{proposition}\label{strongchainsprop}
		Suppose that $\theta, \kappa$ are infinite cardinals, and $\mu$ is a cardinal which is possibly finite. Suppose that $\mu< \kappa$ and $\theta< \kappa$. Then there are no chains in $({}^\kappa\mu, \ll_\theta)$ of length $\mu+1$.
	\end{proposition}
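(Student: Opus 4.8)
The plan is to locate a single coordinate $\xi<\kappa$ at which the entire chain is \emph{pointwise} strictly increasing, and then to derive a contradiction from the elementary fact that $\mu$ admits no strictly increasing sequence of length $\mu+1$. The whole point is that there are few pairwise comparisons to worry about (at most $\mu$ of them) and each one fails on a set of size $<\theta$, so the coordinates that are ``bad'' for some comparison form a set of size $<\kappa$ and cannot exhaust $\kappa$.

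First I would suppose towards a contradiction that $\langle f_\alpha\mid\alpha<\mu+1\rangle$ is a chain in $(\fsp{\kappa}{\mu},\ll_\theta)$, so that $f_\alpha\ll_\theta f_\beta$ whenever $\alpha<\beta\leq\mu$. For each such pair I would set
$$B_{\alpha\beta}:=\{\xi<\kappa\mid f_\alpha(\xi)\geq f_\beta(\xi)\},$$
which by the definition of $\ll_\theta$ has size $<\theta$. The next step is to bound the exceptional set $B:=\bigcup_{\alpha<\beta\leq\mu}B_{\alpha\beta}$. When $\mu$ is infinite the number of pairs is at most $|\mu+1|^2=\mu$, so $|B|\leq\mu\cdot\theta=\max(\mu,\theta)<\kappa$ since $\mu,\theta<\kappa$; when $\mu$ is finite, $B$ is a union of finitely many sets each of size $<\theta$, hence $|B|<\theta<\kappa$. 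In either case $|B|<\kappa$, so I may fix some $\xi\in\kappa\setminus B$.

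Finally, for this fixed $\xi$ and every $\alpha<\beta\leq\mu$ we have $\xi\notin B_{\alpha\beta}$, and therefore $f_\alpha(\xi)<f_\beta(\xi)$. Thus $\langle f_\alpha(\xi)\mid\alpha<\mu+1\rangle$ is a strictly increasing sequence of ordinals. A strictly increasing sequence of ordinals satisfies $f_\alpha(\xi)\geq\alpha$ for all $\alpha$; taking $\alpha=\mu$ gives $f_\mu(\xi)\geq\mu$, which contradicts $f_\mu(\xi)\in\mu$. This contradiction finishes the argument.

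The only genuine point to verify, and it is entirely routine, is that the combined exceptional set $B$ has size strictly below $\kappa$; this is exactly where both hypotheses $\mu<\kappa$ and $\theta<\kappa$ are needed. The finite-$\mu$ case has to be separated out only because $|\mu+1|\cdot\theta$ is not literally $\max(\mu,\theta)$ there, but it is handled identically once one notes that a finite union of sets of size $<\theta$ again has size $<\theta$.
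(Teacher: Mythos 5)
Your proposal is correct and follows essentially the same route as the paper: both collect the fewer-than-$\kappa$-many ``bad'' coordinates into a single exceptional set of size at most $\mu\cdot\theta<\kappa$, pick a coordinate $\xi$ outside it, and derive a contradiction from the resulting strictly increasing $(\mu+1)$-sequence of ordinals below $\mu$. The only cosmetic differences are that you spell out the finite-$\mu$ case separately and conclude via $f_\mu(\xi)\geq\mu$ rather than via the order type of the image, which are equivalent observations.
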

	\begin{proof}
		Suppose towards a contradiction that $\langle f_\alpha \mid \alpha\leq \mu\rangle$ is such a chain. Then the following set 
		$$S:=\{\xi <\kappa\mid\exists \alpha, \beta\leq \mu\ [\alpha< \beta\ \&\ f_\alpha(\xi) \geq f_\beta(\xi) ]\}$$
		has size at most $\mu \cdot \theta$, so in particular less than $\kappa$. So now let $\xi \in \kappa \setminus S$. Then the function $\alpha \mapsto f_\alpha(\xi)$ defined for $\alpha\leq\mu$ is strictly increasing in $\alpha$ and has image contained in $\mu$. However the image of this function has ordertype $\mu+1$ which leads to a contradiction.
	\end{proof}
	The following theorem is the main result of this section. As it has a large number of parameters, we mention two important instantiations of the theorem. The first is when $\chi= \rho=\theta^+ $ and $\vartheta = \theta$, this being the content of Corollary~\ref{strongchainsfirstcor}. The second is when $\chi= \rho = \vartheta^+$ for some regular $\vartheta \in (\theta, \kappa)$ such that $\kappa < \vartheta^{+\vartheta}$, assuming such a $\vartheta$ can be found. This is the content of Corollary~\ref{strongchainssecondcor}. 
	\begin{theorem} \label{strongchains}
		Suppose that $\mu, \kappa, \theta, \lambda$ are infinite cardinals with $\theta<\theta^+<\kappa \leq \mu$ and $\lambda$ being regular. Suppose also that there are infinite cardinals $\chi, \rho, \vartheta$ such that 
		\begin{enumerate}
			\item $\theta < \chi, \rho < \kappa$;
			\item $\meet(\kappa, \rho, \vartheta, \theta) < \lambda$;
			\item $\meet(\mu, \chi, \kappa, \vartheta) < \lambda$.
		\end{enumerate}
		Then there are no chains in $({}^{\kappa}\mu, \ll_\theta)$ of length $\lambda$.
	\end{theorem}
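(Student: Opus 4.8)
The plan is to run the convex--equivalence--relation machinery of Section~\ref{sectionprep} in two layers, one for the domain $\kappa$ and one for the value space $\mu$, mirroring the two hypotheses $\meet(\kappa,\rho,\vartheta,\theta)<\lambda$ and $\meet(\mu,\chi,\kappa,\vartheta)<\lambda$. Suppose towards a contradiction that $\vec f=\seq{f_\alpha\mid\alpha<\lambda}$ is a $\ll_\theta$-chain. First I would fix a family $\mathcal B\s[\kappa]^{<\rho}$ witnessing $\meet(\kappa,\rho,\vartheta,\theta)<\lambda$ and a family $\mathcal C\s[\mu]^{<\chi}$ witnessing $\meet(\mu,\chi,\kappa,\vartheta)<\lambda$. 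A disjointification argument as in Lemma~\ref{pumpup}(iii) gives $\kappa\le\meet(\kappa,\rho,\vartheta,\theta)$ and $\mu\le\meet(\mu,\chi,\kappa,\vartheta)$, so $\kappa,\mu<\lambda$. By Lemma~\ref{convexexample} each $\equiv^C_B$ (for $B\in\mathcal B$, $C\in\mathcal C$) is a convex equivalence relation on $\lambda$, and since there are fewer than $\lambda$ of them, Lemma~\ref{convexlemma}(iii) yields a club $D\s\lambda$ with the $0$--$1$ property for $\{\equiv^C_B\mid B\in\mathcal B,\ C\in\mathcal C\}$; write $D=\{i_\alpha\mid\alpha<\lambda\}$ in increasing order.

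The key step is a Claim producing a single pair $(B,C)$ for which all of $D$ lies in distinct $\equiv^C_B$--classes, together with a \emph{large} witnessing set in the image. Fix $i_0<i_1$ in $D$ and set $A:=\{\xi<\kappa\mid f_{i_0}(\xi)<f_{i_1}(\xi)\}$, which has size $\kappa$ since $f_{i_0}\ll_\theta f_{i_1}$. I would split according to the multiplicity of $f_{i_0}\restriction A$: either $f_{i_0}[A]\in[\mu]^{\geq\kappa}$, in which case $\meet(\mu,\chi,\kappa,\vartheta)$ supplies a $C\in\mathcal C$ with $|C\cap f_{i_0}[A]|\geq\vartheta$ and hence a set $A_1\s A$ of size $\vartheta$ of coordinates $\xi$ with $f_{i_0}(\xi)\in C$, so that $f^C_{i_0}(\xi)=f_{i_0}(\xi)<f^C_{i_1}(\xi)$; or else some fibre of $f_{i_0}\restriction A$ already has size $\geq\vartheta$, which I would handle by capturing its common value (or the associated right endpoints) inside some $C\in\mathcal C$ in the same fashion. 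Either way one obtains $C\in\mathcal C$ and $A_1\in[\kappa]^{\geq\vartheta}$ with $f^C_{i_0}\restriction A_1<f^C_{i_1}\restriction A_1$ everywhere. Feeding $A_1$ into $\meet(\kappa,\rho,\vartheta,\theta)$ then gives $B\in\mathcal B$ with $|B\cap A_1|\geq\theta$, whence $i_0\not\equiv^C_B i_1$; the $0$--$1$ property of $D$ upgrades this to: the elements of $D$ are pairwise $\equiv^C_B$--inequivalent.

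With such $(B,C)$ fixed, the functions $\seq{f^C_{i_\alpha}\restriction B\mid\alpha<\lambda}$ form a $\lambda$-chain of maps from $B$ into $C\cup\{\mu\}$ that is strictly increasing modulo $[B]^{<\theta}$: for $\alpha<\beta$ we have $f^C_{i_\alpha}\restriction B\le f^C_{i_\beta}\restriction B$ off a set of size $<\theta$, while $i_\alpha\not\equiv^C_B i_\beta$ forbids equality modulo $<\theta$. In particular, for each $\alpha$ the set $E_\alpha:=\{\xi\in B\mid f^C_{i_\alpha}(\xi)<f^C_{i_{\alpha+1}}(\xi)\}$ has size $\geq\theta$. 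I would now aim for a contradiction by a counting argument against $|B|<\rho<\kappa$ and $|C\cup\{\mu\}|<\chi<\kappa$: each coordinate $\xi\in B$ should host only few strict ascents of the $\lambda$-sequence $\alpha\mapsto f^C_{i_\alpha}(\xi)$, forcing $\sum_\alpha|E_\alpha|$ below $\lambda$ and contradicting $\sum_\alpha|E_\alpha|\geq\lambda\cdot\theta=\lambda$.

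The hard part will be exactly this last count. A naive tally fails by a hair, because a single coordinate's value sequence need not be monotone: each descent $f^C_{i_\alpha}(\xi)>f^C_{i_{\alpha+1}}(\xi)$ is charged only to the $<\theta$-sized bad set of the consecutive pair, and these bad sets can accumulate to size $\lambda$ across the chain, in principle licensing $\lambda$-many ascents as well. The resolution must therefore exploit the convexity and the $0$--$1$ property more tightly than a global sum---either by arranging $B$ inside a suitable end segment so that, along the relevant indices, $f_{i_\alpha}\restriction B\le f_{i_\beta}\restriction B$ holds \emph{everywhere} (killing descents on $B$ and making a selection $\alpha\mapsto(\tau_\alpha,\zeta_\alpha)\in B\times C$ injective, as in the proof of Theorem~\ref{weakchainone}), or by iterating the Claim to pass to a subchain of length $\lambda$ on which the ascent count becomes strict. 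This is where the genuinely new idea over Theorem~\ref{weakchainone} is needed, since the partition relation and cofinality hypotheses used there are unavailable here; the extra leverage comes precisely from the \emph{large} witnessing sets $A_1$ of size $\vartheta$ produced by the Claim, the very feature that was absent in the weak-chain setting.
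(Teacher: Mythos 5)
Your overall frame is the right one and matches the paper's: two meeting families $\mathcal B$, $\mathcal C$, the convex relations $\equiv^C_B$, a $0$--$1$ club $D$, a single pair $(B,C)$ separating two elements of $D$, and a final pigeonhole of pairs $(\tau_\alpha,\zeta_\alpha)$ into $B\times C$. But there are two genuine gaps, and the second one is exactly where the new idea of the theorem lives.

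First, your Claim does not go through as stated. The family $\mathcal C$ only meets subsets of $\mu$ of size at least $\kappa$, so your dichotomy on $f_{i_0}\restriction A$ breaks in the second horn: if some fibre of $f_{i_0}\restriction A$ is large, neither its common value nor the set of ``right endpoints'' $\{f_{i_1}(\xi)\}$ is guaranteed to be of size $\geq\kappa$, so nothing forces any $C\in\mathcal C$ to capture it, and without an element of $C$ in the interval $[f_{i_0}(\xi),f_{i_1}(\xi))$ you get $f^C_{i_0}(\xi)=f^C_{i_1}(\xi)$. The paper's Claim~\ref{strongchainsclaim} is designed precisely to manufacture a genuinely $\kappa$-sized subset of $\mu$ to feed to $\mathcal C$: it uses not two but $\kappa+1$ functions $f_{i_\beta}$, $\beta\leq\kappa$, and recursively builds distinct coordinates $\xi_\alpha$ and indices $j(\alpha)$ so that $\alpha\mapsto f_{i_{j(\alpha)}}(\xi_\alpha)$ is injective with values in $[f_{i_0}(\xi_\alpha),f_{i_\kappa}(\xi_\alpha))$; at stage $\nu$ one picks a coordinate $\xi^*$ at which $\beta\mapsto f_{i_\beta}(\xi^*)$ is strictly increasing for $\beta\leq\gamma=\otp(Y_\nu)$, so its image, having order type $\gamma+1$, cannot lie inside the set $Y_\nu$ already built. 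This is the step your proposal is missing.

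Second, you leave the endgame open and explicitly flag it as the hard part. Your first suggested repair is close in spirit to what the paper does, but ``a suitable end segment'' is not available: unlike in Theorem~\ref{weakchainone}, there is no cofinality hypothesis bounding the union of the $<\theta$-sized bad sets of $\varkappa$-many pairs below $\kappa$ as an ordinal. The paper's fix is to set $\varkappa:=\chi+\rho<\kappa$, partition $\kappa$ into $\kappa$-many pieces $S_\eta$ of size $\kappa$, and fix \emph{in advance} an $\eta$ such that on $S_\eta$ the functions $f_{i_\alpha}$, $\alpha<\varkappa$, are pointwise strictly increasing (the bad set has size $<\kappa$, so misses some piece). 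Crucially $\mathcal B$ is taken closed under intersection with the pieces $S_\eta$, so the $B$ produced later can be assumed to lie inside $S_\eta$; then on $B$ there are no descents among the first $\varkappa$ indices, each consecutive pair yields $(\tau_\alpha,\zeta_\alpha)\in B\times C$ with $f^C_{i_\alpha}(\tau_\alpha)=\zeta_\alpha<f^C_{i_{\alpha+1}}(\tau_\alpha)$, and since $|B\times C|<\rho\cdot\chi\leq\varkappa$ two of these pairs must coincide, giving $\zeta^*<\zeta^*$. No counting over the whole length $\lambda$ is needed. Until you supply both the injective-image recursion and this partition-and-pigeonhole arrangement, the proof is not complete.
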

	\begin{proof} Let $\varkappa:= \chi+\rho$, so that $\varkappa < \kappa$. Note that the hypotheses imply that $\lambda > \mu, \kappa$.
		Suppose towards a contradiction that $\langle f_\alpha \mid \alpha< \lambda\rangle$ is such a chain. Let $\mathcal S =\langle S_\eta \mid \eta< \kappa\rangle$ be a partition of $\kappa$ into $\kappa$-many pairwise disjoint sets of size $\kappa$. Let $\mathcal B$ be a witness to $\meet(\kappa, \rho, \vartheta, \theta) < \lambda$ and we furthermore can assume that $\mathcal B$ is closed under taking intersections with elements of $\mathcal S$. Let $\mathcal C$ be a witness to $\meet(\mu,  \chi,\kappa, \vartheta) < \lambda$. Then the family of convex equivalence relations $\{\equiv^C_B \mid B \in \mathcal B, C \in \mathcal C\}$ has size less than $\lambda$, so let $D \s \lambda$ be a club with the 0-1 property with respect to it by Lemma~\ref{convexlemma}. Let $\langle i_\alpha \mid \alpha < \lambda\rangle$ be the increasing enumeration of $D$. As $\varkappa< \kappa$, the set 
		$$\{\xi < \kappa\mid \exists \alpha, \beta < \varkappa\ [\alpha< \beta\ \&\ f_{i_\alpha}(\xi) \geq f_{i_\beta}(\xi)]\}$$
		has size less than $\kappa$, so we can fix an $\eta< \kappa$ such that 
		$$S_\eta \cap \{\xi< \kappa \mid \exists \alpha, \beta < \varkappa\ [\alpha< \beta\ \&\ f_{i_\alpha}(\xi) \geq f_{i_\beta}(\xi)]\} = \emptyset.$$
		\begin{claim}\label{strongchainsclaim}
			There are sequences $\langle j(\alpha) \mid \alpha< \kappa\rangle$ and $\langle \xi_\alpha \mid \alpha< \kappa\rangle$ such that
			\begin{enumerate}
				\item $\langle j(\alpha) \mid \alpha< \kappa\rangle$ consists of ordinals less than $\kappa$ and possibly contains repetitions;
				\item $\langle \xi_\alpha \mid \alpha< \kappa\rangle$ are all distinct and contained in $S_\eta$;
				\item for every $\alpha< \kappa$, $f_{i_0}(\xi_\alpha) \leq f_{i_{j(\alpha)}}(\xi_\alpha) < f_{i_{\kappa}}(\xi_\alpha)$;
				\item the function $\alpha \mapsto f_{i_{j(\alpha)}}(\xi_\alpha)$ defined for $\alpha< \kappa$ is injective.
			\end{enumerate}
		\end{claim}
		\begin{why}
			We construct the sequences $\langle j(\alpha) \mid \alpha< \kappa\rangle$ and $\langle \xi_\alpha \mid \alpha< \kappa\rangle$ by recursion on $\alpha< \kappa$. The base case is easy to perform: set $j(0) = 0$, and since $f_{i_0} \ll_\theta f_{i_\kappa}$ and $|S_\eta| = \kappa > \theta$ we can find a $\xi_0\in S_\eta$ such that $f_{i_0}(\xi_0) = f_{i_{j(0)}}(\xi_0)< f_{i_{\kappa}}(\xi_0)$. 
			
			So suppose that $\nu< \kappa$ and we have constructed $\langle j(\alpha) \mid \alpha < \nu\rangle$ and $\langle \xi_\alpha \mid \alpha< \nu\rangle$ which satisfy the requirements up to $\nu$ and let $Y_\nu:= \{f_{i_{j(\alpha)}}(\xi_\alpha) \mid \alpha< \nu\}$ which we note has size $|\nu|$. Let $\gamma:= \otp(Y_\nu)$ and note that $\gamma< \kappa$. 
			Let $Z:=[0, \gamma+1) \cup\{\kappa\}$. As $|Z| < \kappa$, the set $S^*$ defined as follows
			$$S^*:= \{\xi \in S_\eta\mid \exists \alpha, \beta \in Z\ [\alpha < \beta\ \&\ f_{i_\alpha}(\xi) \geq f_{i_\beta}(\xi)]\}$$
			has size less than $\kappa$. In particular, we can find a $\xi^* \in S_\eta \setminus (Y_\nu \cup S^*)$.
			
			Now, the function 
			$$\beta \mapsto f_{i_{\beta}}(\xi^*)$$
			defined for $\beta < \gamma+1$ is strictly increasing in $\beta$ and has image contained in $[f_{i_0}(\xi^*), f_{i_{\kappa}}(\xi^*))$ by the choice of $\xi^*$. Since the image has ordertype $\gamma+1$, it follows that for some $\beta < \gamma+1$ we have that $f_{i_{\beta}}(\xi^*) \notin Y_\nu$. So now taking $j(\nu):= \beta$ and $\xi_\nu:= \xi^*$ we can continue the recursive construction.
		\end{why}
		Let $\langle j(\alpha) \mid \alpha < \kappa\rangle$ and $\langle \xi_\alpha \mid \alpha< \kappa\rangle$ be as in the claim and let 
		$$Y:=\{f_{i_{j(\alpha)}}(\xi_\alpha) \mid \alpha< \kappa\}.$$
		Now as $\mathcal C$ witnesses $\meet(\mu, \chi, \kappa, \vartheta)$, there is some $C \in \mathcal C$ such that $|C \cap Y| \geq \vartheta$. Let 
		$$X:=\{\xi_\alpha \mid \alpha< \kappa,\ f_{i_{j(\alpha)}}(\xi_\alpha) \in C \cap Y\}$$ and we note that it is in $[\kappa]^{\geq \vartheta}$. As $\mathcal B$ witnesses $\meet(\kappa,  \rho, \vartheta,\theta)$, there is some $B \in \mathcal B$ such that $|B \cap X| \geq \theta$. As $\mathcal B$ is closed under intersections with $S_\eta$ and $X \s S_\eta$, we can also assume that $B \s S_\eta$.
		\begin{claim}
			$i_0 \not \equiv^C_B i_{\kappa}$.
		\end{claim}
		\begin{why}
			For every $\alpha$ such that $\xi_\alpha \in X$ we have that 
			$$f_{i_0}(\xi_\alpha) \leq f_{i_{j(\alpha)}}(\xi_\alpha) < f_{i_{\kappa}}(\xi_\alpha),$$
			and since $f_{i_{j(\alpha)}}(\xi_\alpha) \in C \cap Y$ this implies that $$f_{i_0}^C(\xi_\alpha) \leq f_{i_{j(\alpha)}}(\xi_\alpha) < f_{i_{\kappa}}^C(\xi_\alpha).$$
			Since $|B \cap X| \geq \theta$ we finish.
		\end{why}
		By the 0-1 property of $D$, this tells us that all the elements of $D$ are inequivalent with respect to $\equiv^C_B$ and in particular this is true of $\{i_\alpha \mid \alpha< \varkappa\}$. As $B \s S_\eta$ we also have that for every $\alpha< \beta< \varkappa$, $f^C_{i_\alpha} \restriction B \leq f^C_{i_\beta} \restriction B$ everywhere.
		
		So, for every $\alpha< \varkappa$ we can find $\tau_\alpha \in B$ and $\zeta_\alpha \in C$ such that $$f^C_{i_\alpha} (\tau_\alpha) = \zeta_\alpha < f^C_{i_{\alpha+1}} (\tau_\alpha).$$
		As $|B| < \rho$ and $|C| < \chi$, we have that $|B\times C| < \rho \cdot \chi \leq \varkappa$. This means that for some $(\tau^*, \zeta^*) \in B \times C$ we can find $\alpha < \beta < \varkappa$ such that $\tau_\alpha = \tau^* = \tau_\beta$ and $\zeta_\alpha = \zeta^* = \zeta_\beta$. But then
		$$f_{i_\alpha}^C(\tau^*) = \zeta^* < f_{i_{\alpha+1}}^C(\tau^*) \leq f_{i_\beta}^C(\tau^*) = \zeta^*.$$
		This cannot be.
	\end{proof}
	\begin{corollary} \label{strongchainsfirstcor}
		Let $\theta < \theta^+ < \kappa$ be infinite cardinals and $\lambda$ an infinite regular cardinal. Suppose that $\meet(\kappa, \theta)< \lambda$. Then there are no chains in $({}^\kappa\kappa, \ll_\theta)$ of length $\lambda$.
	\end{corollary}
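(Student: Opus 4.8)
The plan is to obtain this as the special case $\mu = \kappa$ of Theorem~\ref{strongchains}, taking the auxiliary cardinals to be $\chi = \rho = \theta^+$ and $\vartheta = \theta$, exactly as flagged in the discussion preceding that theorem. First I would check the standing constraint $\theta < \theta^+ < \kappa \leq \mu$ of Theorem~\ref{strongchains}: since we are setting $\mu = \kappa$ this reads $\theta < \theta^+ < \kappa$, which is precisely our hypothesis.

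Next I would verify the three numbered clauses. Clause (1), namely $\theta < \chi, \rho < \kappa$, holds because $\chi = \rho = \theta^+$ and $\theta < \theta^+ < \kappa$. For clause (2), unwinding the abbreviations in the definition of the meeting numbers gives $\meet(\kappa, \rho, \vartheta, \theta) = \meet(\kappa, \theta^+, \theta, \theta) = \meet(\kappa, \theta)$, which is below $\lambda$ by assumption.

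The only clause requiring an extra observation is (3). Here $\meet(\mu, \chi, \kappa, \vartheta) = \meet(\kappa, \theta^+, \kappa, \theta)$, and this is exactly $\meet^-(\kappa, \theta)$ by the definition of the latter. By Lemma~\ref{pumpup}(iii) we have $\meet^-(\kappa, \theta) \leq \meet(\kappa, \theta)$, so the hypothesis $\meet(\kappa, \theta) < \lambda$ already forces $\meet^-(\kappa, \theta) < \lambda$, which is clause (3). With all three clauses established, Theorem~\ref{strongchains} yields that there are no chains in $({}^\kappa\kappa, \ll_\theta)$ of length $\lambda$.

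I do not expect any genuine obstacle here: the entire argument is a matter of correctly matching the parameters of the four-argument meeting number to the abbreviated notation $\meet(\kappa,\theta)$ and $\meet^-(\kappa,\theta)$, and then invoking the monotonicity chain $\kappa \leq \meet^-(\kappa,\theta) \leq \meet(\kappa,\theta) \leq \cof([\kappa]^\theta)$ from Lemma~\ref{pumpup}(iii) to deduce the third hypothesis from the single given bound. The main point worth stating carefully is simply that the two distinct hypotheses (2) and (3) of Theorem~\ref{strongchains} both follow from the one assumption $\meet(\kappa,\theta) < \lambda$, the former by definitional identity and the latter via that monotonicity.
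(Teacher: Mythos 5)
Your proposal is correct and matches the paper's own proof: the paper likewise invokes Theorem~\ref{strongchains} with $\chi=\rho=\theta^+$ and $\vartheta=\theta$, noting that clause (ii) becomes $\meet(\kappa,\theta)<\lambda$ by definition and clause (iii) becomes $\meet^-(\kappa,\theta)<\lambda$, which follows from Lemma~\ref{pumpup}(iii). Your parameter-matching and the use of the monotonicity chain are exactly the intended argument.
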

	\begin{proof}
		As $\meet^-(\kappa, \theta) \leq \meet(\kappa, \theta) < \lambda$ by Lemma~\ref{pumpup}(iii), we can appeal to Theorem~\ref{strongchains} with $\chi= \rho=\theta^+$ and $\vartheta = \theta$ to obtain the desired conclusion.
	\end{proof}
	We can compare this with the analogous corollary from \cite[Theorem 5]{shelah} where instead of $\meet(\kappa, \theta)< \lambda$ the slightly stronger hypothesis of $\cof([\kappa]^\theta) < \lambda$ would have been used to obtain the same conclusion. It is also clear that proving this corollary using Theorem~\ref{weakchainone} would require further assumptions.
	
	However, glancing at Lemma~\ref{pumpup} and the discussion following it, it is clear that for the most important case of $\lambda = \kappa^+$, assuming the consistency of enough large cardinals there are models of set theory where the hypotheses of Corollary~\ref{strongchainsfirstcor} may not hold even for very small $\kappa$ such as $\aleph_\omega$. The next result, which proves Theorem~B, remedies some of these scenarios.
	\begin{corollary} \label{strongchainssecondcor}Let $\theta < \kappa$ be infinite cardinals where $\kappa$ is a limit cardinal which is not a cardinal fixed point. Then there are no chains in $({}^\kappa\kappa, \ll_\theta)$ of length $\kappa^+$.
	\end{corollary}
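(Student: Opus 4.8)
The plan is to invoke Theorem~\ref{strongchains} with $\mu := \kappa$ and $\lambda := \kappa^+$, following the second instantiation advertised just before that theorem: I will take $\chi = \rho := \vartheta^+$ for a carefully chosen regular cardinal $\vartheta \in (\theta, \kappa)$. Since $\kappa$ is a limit cardinal and $\theta < \kappa$, the successor cardinal $\theta^+$ lies strictly below $\kappa$, so the standing requirement $\theta < \theta^+ < \kappa \leq \mu = \kappa$ of Theorem~\ref{strongchains} is met. Everything thus reduces to producing such a $\vartheta$ and checking the three numbered hypotheses for $(\chi, \rho, \vartheta) = (\vartheta^+, \vartheta^+, \vartheta)$.

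To locate $\vartheta$, I exploit that $\kappa$ is not a cardinal fixed point. Because the class of cardinal fixed points is closed, $\kappa$ cannot be a limit of them either, so there is some $\delta < \kappa$ with no cardinal fixed points in $(\delta, \kappa]$. Writing $\kappa = \aleph_\xi$, the fact that $\kappa$ is not a fixed point says precisely that $\xi < \kappa$. As $\kappa$ is a limit cardinal, successor cardinals are cofinal in it, so I may fix a successor (hence regular) cardinal $\vartheta$ with $\max(\theta, \delta, \xi) < \vartheta < \kappa$; then also $\vartheta^+ < \kappa$, so hypothesis (i) of Theorem~\ref{strongchains}, namely $\theta < \vartheta^+ < \kappa$, holds.

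For hypothesis (iii) I note that $\meet(\mu, \chi, \kappa, \vartheta) = \meet(\kappa, \vartheta^+, \kappa, \vartheta) = \meet^-(\kappa, \vartheta)$ by the definition of $\meet^-$; since $\vartheta > \delta$ there are no cardinal fixed points in $(\vartheta, \kappa]$, so Lemma~\ref{pumpup}(viii) gives $\meet^-(\kappa, \vartheta) = \kappa < \lambda$. For hypothesis (ii), monotonicity in the last parameter (Lemma~\ref{pumpup}(i), using $\theta \leq \vartheta$) yields $\meet(\kappa, \vartheta^+, \vartheta, \theta) \leq \meet(\kappa, \vartheta^+, \vartheta, \vartheta) = \meet(\kappa, \vartheta)$. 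Because $\vartheta = \aleph_\eta$ was chosen with $\vartheta > \xi$, ordinal arithmetic gives $\vartheta^{+\vartheta} = \aleph_{\eta + \vartheta}$ with $\eta + \vartheta \geq \vartheta > \xi$, so $\kappa = \aleph_\xi < \vartheta^{+\vartheta}$; as $\vartheta$ is regular, $\cf(\vartheta) = \vartheta$ and Lemma~\ref{pumpup}(vi) gives $\meet(\kappa, \vartheta) = \kappa < \lambda$. With all three hypotheses verified, Theorem~\ref{strongchains} rules out a chain in $({}^\kappa\kappa, \ll_\theta)$ of length $\kappa^+$.

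The only real obstacle is arranging that a single $\vartheta$ meets the two competing largeness demands at once: it must be large enough (above $\xi$) to force $\kappa < \vartheta^{+\vartheta}$, feeding Lemma~\ref{pumpup}(vi), and large enough (above $\delta$) to escape every cardinal fixed point below $\kappa$, feeding Lemma~\ref{pumpup}(viii). Both are lower bounds on $\vartheta$, and the two hypotheses on $\kappa$---not a fixed point, hence $\xi < \kappa$, and not a limit of fixed points, hence $\delta < \kappa$---are exactly what guarantee room below $\kappa$ for such a $\vartheta$. The one computation worth checking with care is the identity $\vartheta^{+\vartheta} = \aleph_{\eta+\vartheta}$ and the inference $\vartheta > \xi \Rightarrow \kappa < \vartheta^{+\vartheta}$.
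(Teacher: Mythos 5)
Your proof is correct and follows essentially the same route as the paper: apply Theorem~\ref{strongchains} with $\mu=\kappa$, $\lambda=\kappa^+$, and $\chi=\rho=\vartheta^+$ for a regular $\vartheta\in(\theta,\kappa)$ above $\xi$ (where $\kappa=\aleph_\xi$), so that Lemma~\ref{pumpup}(vi) gives $\meet(\kappa,\vartheta)=\kappa$. The only difference is cosmetic: for hypothesis (iii) you introduce $\delta$ and invoke Lemma~\ref{pumpup}(viii), whereas the paper gets $\meet^-(\kappa,\vartheta)=\kappa$ at no extra cost from Lemma~\ref{pumpup}(iii) together with $\meet(\kappa,\vartheta)=\kappa$, making the ``not a limit of fixed points'' detour unnecessary.
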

	\begin{proof}
		Suppose that $\xi< \kappa$ is such that $\kappa = \aleph_\xi$. Let $\vartheta \in (\theta,\kappa)$ be a regular cardinal such that $\vartheta > \xi$. Then $\vartheta<\kappa < \vartheta^{+\vartheta}$. So by Lemma~\ref{pumpup}(vi) and Lemma~\ref{pumpup}(iii) we have that 
		$$\kappa  =\meet(\kappa, \vartheta) = \meet^-(\kappa, \vartheta)=\meet(\kappa,  \vartheta^+,\kappa, \vartheta).$$ Also, as $\vartheta > \theta$, by Lemma~\ref{pumpup}(i) we have that 
		$$\kappa =\meet(\kappa, \vartheta) = \meet(\kappa, \vartheta^+, \vartheta, \vartheta) = \meet(\kappa, \vartheta^+, \vartheta, \theta).$$
		So we appeal to Theorem~\ref{strongchains} with $\chi= \rho = \vartheta^+$ to finish.
	\end{proof}
	While the previous two corollaries were focused on Question~\ref{generalhajnal}, the next corollary focuses on the most general problem, and the one after, which proves Theorem~C, on Conjecture~\ref{shelahconj}.
	\begin{corollary}
		Let $\theta< \theta^+< \kappa \leq \mu$ be infinite cardinals. Suppose that 
		\begin{enumerate}
			\item $\kappa < \theta^{+\cf(\theta)}$;
			\item $\mu< \theta^{+\cf(\kappa)}$.
		\end{enumerate}
		Then there are no chains in $({}^{\kappa}\mu, \ll_\theta)$ of length $\mu^+$.
	\end{corollary}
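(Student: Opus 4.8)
The plan is to derive this corollary directly from Theorem~\ref{strongchains}, taking $\lambda := \mu^+$ (which is regular, being a successor cardinal) together with the parameters $\chi := \rho := \theta^+$ and $\vartheta := \theta$. Before doing so, I would peel off the degenerate case $\kappa = \mu$: here the claim is exactly that $({}^\kappa\kappa, \ll_\theta)$ carries no chain of length $\kappa^+$, and since assumption~(i), $\kappa < \theta^{+\cf(\theta)}$, together with the final clause of Lemma~\ref{pumpup}(vi) yields $\meet(\kappa, \theta) = \kappa < \kappa^+$, this is immediate from Corollary~\ref{strongchainsfirstcor}. So I would assume from this point on that $\theta^+ < \kappa < \mu$.

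With the parameters above I would verify the three hypotheses of Theorem~\ref{strongchains} in turn. Hypothesis~(i), namely $\theta < \chi, \rho < \kappa$, reads $\theta < \theta^+ < \kappa$ and holds by assumption. For hypothesis~(ii) I would unfold the four-argument meeting number into its special case, $\meet(\kappa, \rho, \vartheta, \theta) = \meet(\kappa, \theta^+, \theta, \theta) = \meet(\kappa, \theta)$, and then invoke the final clause of Lemma~\ref{pumpup}(vi) with assumption~(i) to conclude $\meet(\kappa, \theta) = \kappa < \mu^+ = \lambda$. For hypothesis~(iii) I would similarly rewrite $\meet(\mu, \chi, \kappa, \vartheta) = \meet(\mu, \theta^+, \kappa, \theta) = \meet(\mu, \kappa, \theta)$, in the notation of the special cases introduced after the definition of $\meet$. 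The key observation is that assumption~(ii), $\mu < \theta^{+\cf(\kappa)}$, is exactly the instance of Lemma~\ref{pumpup}(vi) obtained by reading $\mu$ as the ambient cardinal, $\kappa$ as the threshold, and $\theta$ as the meeting size; since $\kappa \in [\theta, \mu) \cap \card$ (this is where $\kappa < \mu$ is used), it delivers $\meet(\mu, \kappa, \theta) = \mu < \mu^+ = \lambda$. All three hypotheses now hold, so Theorem~\ref{strongchains} produces no chains in $({}^\kappa\mu, \ll_\theta)$ of length $\mu^+$.

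The only point that requires genuine care is the bookkeeping that matches the four-argument meeting numbers appearing in the hypotheses of Theorem~\ref{strongchains} against the three-argument special cases in which Lemma~\ref{pumpup}(vi) is phrased, and in particular the recognition that assumptions~(i) and~(ii) are not ad hoc but are precisely the two instances of Lemma~\ref{pumpup}(vi) needed to collapse the two relevant meeting numbers to $\kappa$ and $\mu$ respectively. A subtlety worth flagging is that the threshold-below-ambient side condition of Lemma~\ref{pumpup}(vi) forces $\kappa < \mu$ in the verification of hypothesis~(iii); this is exactly why the equality case $\kappa = \mu$ cannot be handled by the same application and must instead be routed through Corollary~\ref{strongchainsfirstcor} at the outset.
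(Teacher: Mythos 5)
Your proof is correct and follows essentially the same route as the paper: both apply Theorem~\ref{strongchains} with $\vartheta=\theta$ and $\chi=\rho=\theta^+$, using Lemma~\ref{pumpup}(vi) to turn the two hypotheses into $\meet(\kappa,\theta)=\kappa$ and $\meet(\mu,\theta^+,\kappa,\theta)=\mu$. Your separate treatment of the boundary case $\kappa=\mu$ (where Lemma~\ref{pumpup}(vi) does not literally apply to compute $\meet(\mu,\theta^+,\kappa,\theta)$, since it requires $\kappa<\mu$) is a legitimate touch-up of a point the paper's two-line proof leaves implicit; alternatively one could observe that in that case $\meet(\mu,\theta^+,\kappa,\theta)=\meet^-(\kappa,\theta)\leq\meet(\kappa,\theta)=\kappa$ by Lemma~\ref{pumpup}(iii) and (vi), so the same application of Theorem~\ref{strongchains} goes through without the case split.
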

	\begin{proof}
		By Lemma~\ref{pumpup}(vi) the hypotheses imply that $\meet(\kappa, \theta) = \kappa$ and $\meet(\mu, \theta^+, \kappa, \theta) = \mu$ hold. So we apply Theorem~\ref{strongchains} with $\vartheta= \theta$ and $\chi=\rho = \theta^+$ to finish.
	\end{proof}
	\begin{corollary}
		Let $\theta\leq \mu<\theta^{+(\theta^{++})}$ be infinite cardinals. Then there are no chains in $({}^{\theta^{++}}\mu, \ll_\theta)$ of length $\mu^+$.
	\end{corollary}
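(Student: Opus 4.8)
The plan is to derive this from Theorem~\ref{strongchains}, setting $\kappa := \theta^{++}$ and $\lambda := \mu^+$ and taking as auxiliary cardinals $\chi := \rho := \theta^+$ and $\vartheta := \theta$; all of the remaining work is a case split on the size of $\mu$ relative to $\theta^{++}$ together with two meeting-number computations supplied by Lemma~\ref{pumpup}. Note first that $\lambda = \mu^+$ is regular, as required.

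First I would dispose of the degenerate range $\mu < \theta^{++}$. Here I cannot invoke Theorem~\ref{strongchains}, which demands $\kappa \leq \mu$, but I do not need to: with $\kappa = \theta^{++}$ both $\mu < \kappa$ and $\theta < \kappa$ hold, so Proposition~\ref{strongchainsprop} already forbids chains of length $\mu+1$ in $({}^{\theta^{++}}\mu, \ll_\theta)$, and since $\mu+1 < \mu^+$ this rules out chains of length $\mu^+$ a fortiori (restrict any such chain to its first $\mu+1$ terms).

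For the main range $\theta^{++} \leq \mu < \theta^{+(\theta^{++})}$ I would verify the hypotheses of Theorem~\ref{strongchains} for the parameters chosen above. The standing requirement $\theta < \theta^+ < \kappa \leq \mu$ and clause~(1), $\theta < \chi, \rho < \kappa$, are immediate from $\theta^+ < \theta^{++} \leq \mu$. Clause~(2) asks for $\meet(\kappa, \rho, \vartheta, \theta) < \lambda$, which unwinds to $\meet(\theta^{++}, \theta^+, \theta, \theta) = \meet(\theta^{++}, \theta)$; since $\cf(\theta) \geq \aleph_0$ gives $\theta^{++} < \theta^{+\omega} \leq \theta^{+\cf(\theta)}$, Lemma~\ref{pumpup}(vi) evaluates this to $\theta^{++} < \mu^+$. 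Clause~(3) asks for $\meet(\mu, \chi, \kappa, \vartheta) < \lambda$, which unwinds to $\meet(\mu, \theta^+, \theta^{++}, \theta) = \meet(\mu, \theta^{++}, \theta)$. This is the one place where the hypothesis on $\mu$ does real work: when $\theta^{++} < \mu$, the cardinal $\theta^{++}$ lies in $[\theta,\mu)\cap\card$ and is regular, so $\cf(\theta^{++}) = \theta^{++}$, and the bound $\mu < \theta^{+(\theta^{++})} = \theta^{+\cf(\theta^{++})}$ is exactly what Lemma~\ref{pumpup}(vi) needs to conclude $\meet(\mu, \theta^{++}, \theta) = \mu < \mu^+$.

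The only subtlety I anticipate is the boundary case $\mu = \theta^{++}$, where clause~(3) degenerates: $\meet(\mu, \theta^{++}, \theta)$ becomes $\meet(\theta^{++}, \theta^+, \theta^{++}, \theta) = \meet^-(\theta^{++}, \theta)$, to which Lemma~\ref{pumpup}(vi) no longer literally applies because there $\rho = \kappa$. Here I would simply bound $\meet^-(\theta^{++}, \theta) \leq \meet(\theta^{++}, \theta) = \theta^{++} = \mu < \mu^+$ using Lemma~\ref{pumpup}(iii) together with the clause~(2) computation, or equivalently observe that this case is precisely Corollary~\ref{strongchainsfirstcor} with $\kappa = \theta^{++}$. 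With all three clauses verified, Theorem~\ref{strongchains} delivers the conclusion; the argument involves no genuinely hard step, only the careful translation between the four- and three-argument meeting-number notations and the handling of the two edge ranges of $\mu$.
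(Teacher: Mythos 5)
Your proposal is correct and follows essentially the same route as the paper: dispose of $\mu<\theta^{++}$ via Proposition~\ref{strongchainsprop}, then apply Theorem~\ref{strongchains} with $\chi=\rho=\theta^+$, $\vartheta=\theta$, using Lemma~\ref{pumpup}(vi) to compute the two meeting numbers. Your explicit treatment of the boundary case $\mu=\theta^{++}$ (where Lemma~\ref{pumpup}(vi) does not literally apply to $\meet(\mu,\theta^+,\theta^{++},\theta)=\meet^-(\theta^{++},\theta)$, but Lemma~\ref{pumpup}(iii) closes the gap) is in fact slightly more careful than the paper's own write-up.
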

	\begin{proof}
	In case $\mu < \theta^{++}$ then we can appeal to Proposition~\ref{strongchainsprop}. So, we can assume that $\theta^{++}\leq \mu$.	By Lemma~\ref{pumpup}(vi) we have that $\meet(\mu,  \theta^+,\theta^{++}, \theta) = \mu$ and $\meet(\theta^{++}, \theta^+, \theta, \theta) = \theta^{++}$ holds. So we apply Theorem~\ref{strongchains} with $\vartheta= \theta$ and $\chi=\rho = \theta^+$ to finish.
	\end{proof}
	
	\section{Acknowledgements}
	The debt to \cite{shelah} should be clear to any reader who makes it to this point. My interest in the results of Koszmider began when my teachers David Asper{\'o} and Mirna D{\v z}amonja suggested reading \cite{koszmiderfunctions} to start my doctoral studies. Some of this research was done when I was a postdoctoral researcher at the Instituto Tecnol{\'o}gico Aut{\'o}nomo de M{\'e}xico. After listening to a talk on Theorem A in February 2020, Assaf Rinot emphasised isolating the cardinals from Section~\ref{subsectioncardinals} and focusing on the space $({}^\kappa\mu, <_\theta)$ and also informed me of Lemma~\ref{pumpup}. The emphasis on $({}^\kappa\mu, <_\theta)$ makes it clear that the results of Section~\ref{sectionweakchains} are more general than the results in \cite{shelah}. This article has benefited greatly from his suggestions, his notes, and his interest. A conversation with Ido Feldman helped inspire the discovery of Theorem~\ref{strongchains}. While working on the article at Bar-Ilan University I was supported by a postdoctoral fellowship funded by the Israel Science Foundation (grant agreement 2066/18).

\end{document}